\newcommand{\varep}{\varepsilon}
\newcommand{\norm} [1]{\left\| {#1}\right\|}
\newcommand{\zz} {\mathbf {z}}
\newcommand{\vv} {\mathbf {v}}
\newcommand{\uu} {\mathbf {u}}
\newcommand{\s} {\mathbf {s}}
\newcommand{\ddt}{\frac d{dt} }
\def\beq{\begin{equation}}
\def\eeq{\end{equation}} 
\def\beqs{\begin{equation*}}
\def\eeqs{\end{equation*}}
\def\myclearpage{}
\title{The expanded mixed finite element method for generalized Forchheimer flows in porous media\thanks{This 
        work was supported by NSF grant (DMS, Proposal ID: 1412796)}}
\author{Akif Ibragimov \thanks{Department of Mathematics and Statistics, Texas Tech University, Box 41042
Lubbock, TX 79409-1042, U.S.A. ({\tt akif.ibraguimov@ttu.edu}).}
             \and  Thinh T. Kieu\thanks{Department of Mathematics, University of North Georgia, Gainesville Campus, 3820 Mundy Mill Rd., Oakwood, GA 30566, U.S.A. (\tt  thinh.kieu@ung.edu)}}
\date{\today}
\begin{document}

\maketitle

\begin{abstract}
 We study the expanded mixed finite element method applied to degenerate parabolic equations with the Dirichlet boundary condition. The equation is considered a prototype of the nonlinear Forchheimer equation, a inverted to the nonlinear Darcy equation with permeability coefficient depending on pressure gradient, for slightly compressible fluid flow in porous media. The bounds for the solutions are established. In both continuous and discrete time procedures, utilizing the monotonicity properties of Forchheimer equation and boundedness of solutions we prove the optimal error estimates  in $L^2$-norm for solution. The error bounds are established for the solution and divergence of the vector variable in Lebesgue norms and Sobolev norms  under some additional regularity assumptions. A numerical example using the lowest order Raviart-Thomas ($RT_0$) mixed element are provided agreement with our theoretical analysis.
\end{abstract}

\begin{keywords} 
Expanded mixed finite element, nonlinear degenerate parabolic equations, generalized Forchheimer equations, error estimates.
\end{keywords}

\begin{AMS}
65M12, 65M15, 65M60, 35Q35, 76S05.
\end{AMS}

\pagestyle{myheadings}
\thispagestyle{plain}
\markboth{Akif Ibragimov and Thinh T. Kieu}{Expanded mixed finite element method for generalized Forchheimer equations}

\section{Introduction}
%-----------------------------------------------------------------------------------------------
%Section 3
%-----------------------------------------------------------------------------------------------
The paper is dedicated to the analysis of mixed finite element approximation of the solution of the system modeling  the flows of compressible fluid in porous media subjected to generalized Forchheimer law. Forchheimer type  flow belongs to the so called post-Darcy class of flows and are designed to model high velocity filtration in porous media when inertial and friction terms cannot be ignored. In recent years, this phenomena generated a lot of interest in the research community in many areas of engineering, environmental and groundwater hydrology and in medicine.

Accurate description of fluid flow behavior in porous media is essential to successful forecasting and project design in reservoir engineering.  Most of the analyses of the flow in porous media are based on Darcy law, which describes a linear relationship between the pressure gradient and the fluid velocity. However, when the fluid in porous media flows at very high or very low velocity, the Darcy law is no longer valid. Reservoir engineers  often divide flows in the media into three main categories with respect to Darcy law (cf. \cite{Darcy1}): fast flows near the well and fracture (post-Darcy), linear non-fast/non-slow flows described by Darcy equation in the main domain between near well zone and  ``far a way'' region, and on the periphery of  the media, where the impact of the well is small. A nonlinear relationship between velocity and gradient of pressure is introduced adding the higher order term of velocity in the Darcy equation.  In this research we concentrate on the first type of flow, when deviation from linear Darcy is associated with high velocity field.
 
Engineers commonly use Forchheimer equation to take into account inertial phenomena.  In early 1900s, Forchheimer proposed three models for nonlinear flows, the so-called two-term, three-term and power laws (cf. \cite{ForchheimerBook}) to match experimental observations. There is a significant number of papers studying these equations and their variations – the Brinkman-Forchheimer equations for incompressible fluids (cf.\cite{CKU2006,ChadamQin,Franchi2003,Straughan2013,Payne1999b,Qin1998}).  Recently the authors in \cite{ABHI1, HI1,HI2,HIKS1,HK,HKP1} proposed and studied generalized Forchheimer equations for slightly compressible fluids in porous media. These works focus on theory of existence, stability and qualitative property of solutions within framework of non-linear parabolic equation with coefficient degenerating as gradient of pressure converges to infinity. To apply developed models and method to practical problem, it is important to investigate properties and convergence of the approximate numerical solutions of corresponding degenerate parabolic equations.  

The popular numerical methods for modeling flow in porous media are mixed finite element approximations (cf. \cite {DW93, GW08, KP99, HRH12, EJP05}) and block-centered finite difference method (cf. \cite{HPH12}). These methods are widely used because they inherit conservation properties and because they produce accurate flux even for highly homogeneous media with large jumps of conductivity (permeability) tensor   (cf. \cite{ELPV96}). Arbogast, Wheeler and Zhang in \cite {ATWZ96} analyzed mixed finite element approximations of degenerate parabolic equation. However according to Arbogast, Wheeler and Yotov in \cite {ATWY97}, the standard mixed finite element method is not suitable for problems with degenerating tensor coefficients as the tensor needs to be inverted.

The proposed approach reduces original Forchheimer type equation to generalized Darcy equation with conductivity tensor $K$ degenerating as gradient of the pressure convergence to infinity. At the same time, the standard mixed variational formulation requires inverting $K$.  Woodward and Dawson in \cite{ WCD00} studied expanded mixed finite element methods for a nonlinear parabolic equation modeling flow into variably saturated porous media. Compared with the standard mixed finite element method, the expanded mixed finite element method introduces three variables: unknown scalar function, its gradient, and a flux. In their analysis, the Kirchhoff transformations are used to move the nonlinearity from $K$ term to the gradient and thus analysis of the equations is simplified. This transformation is  not applicable  to our system \eqref{syseq}. 

In this paper,  we will employ techniques developed in \cite {HI1} and the expanded mixed finite element method presented in \cite {ATWY97}. The combination of these techniques enables us to utilize both the special structure of the equation and the advantages of the expanded mixed finite element method, for instance, providing certain implementational advantages over the standard mixed method, in particularly for lowest order Raviart-Thomas (RT) space on the rectangular grids, in obtaining the optimal order error estimates for the solution in several norms of interest.
 
 The outline of this paper is as follows:  
 In section \ref{Pre-aux}, we introduce the generalized formulation of the Forchheimer’s laws for slightly compressible fluids, recall relevant results in \cite{ABHI1, HI1} and  preliminary results.  
 %-----------------------
 In section \ref{mixedFEM}, we present the expanded mixed formulation and standard results for mixed finite element approximations. An implicit backward-difference time discretization of the semidiscrete  scheme is proposed to solve the system \eqref{maineq}.
 %----------------------
 In section \ref{prior-est}, we derive many bounds for solutions to \eqref{weakform} and \eqref{semidiscreteform} in Lebesgue norms in term of boundary data and the initial data.
 %---------------------
 In section \ref{Err-anl}, we establish error estimates in $L^2$-norms, $L^\infty$-norm and $H^{-1}$-norm for pressure. Then the error estimates for gradient of pressure and flux variable are also derived under reasonable assumptions on the regularity of solutions. Also the error analysis for fully discrete version is obtained in suitable norm for the three relevant variables.  
 %----------------------------------
 In section \ref{num-res}, we give a numerical example using the lowest Raviart-Thomas mixed finite element to support our theoretical analysis. 

%========================

%========================
\section {Preliminaries and auxiliaries} \label{Pre-aux}
In this paper, we consider a fluid in a porous medium in a bounded domain $\Omega\subset \mathbb R^d, d\ge 2$.
 Its boundary $\Gamma=\partial \Omega$ belongs to $C^2$.  Let $x\in \mathbb{R}^d,    0<T<\infty,  t\in (0,T]$ be the spatial and time variable. The fluid flow has velocity $\uu(x,t)\in \mathbb{R}^d,$ pressure $p(x,t)\in \mathbb{R}$. 
 
 A general Forchheimer equation which is studied in \cite{ABHI1} has the form 
 \beq\label{eq1}
g(|\uu|)\uu=-\nabla p,
\eeq
where
$
g(s)=a_0+ a_1s^{\alpha_1}+\cdots +a_Ns^{\alpha_N},~~s\geq 0, 
$
$N\geq 1.$ $0<\alpha_1<\ldots<\alpha_N$ are fixed numbers, the coefficients $a_0, \ldots, a_N$ are non-negative numbers with $a_0>0, a_N>0$. In particular when $g(s)=\alpha, \alpha+\beta s, \alpha+\beta s+\gamma s^2 +\gamma_m s^{m-1}, $ where $\alpha, \beta, \gamma, m,\gamma_m$ are empirical constants, we have Darcy's law, Forheimer's two term, three term and power law, respectively. 

The monotonicity of the nonlinear term and the non-degeneracy of the Darcy's parts in \eqref{eq1} enable  us to write $\uu$ implicit in terms of $\nabla p$:
\beq\label{eq3}
\uu=-K(|\nabla p|)\nabla p. 
\eeq 
Here the function $K: \mathbb{R}^+\rightarrow\mathbb{R}^+$ is defined by
\beq\label{eq4}
K(\xi)=\frac{1}{g(s(\xi))} \text{~where~} s=s(\xi)\geq 0 \text{~satisfies~} sg(s)=\xi, \text{~~for~}\xi\geq 0.
\eeq
The state equation, which relates the density $\rho(x,t)>0$ with pressure $p(x,t)$, for slightly compressible fluids is 
   \beq\label{eq6}
   \frac{d\rho}{dp}=\kappa^{-1}\rho \text{~or~} \rho(p)=\rho_0\exp(\frac{p-p_0}{\kappa}),\quad \kappa>0.
   \eeq
Other equations governing the fluid's motion are the equation of continuity:
\beqs
\frac{d \rho}{d t} +\nabla\cdot(\rho \uu)=0,
\eeqs 
which yields 
\beq\label{eq5}
\frac {d\rho}{dp} \frac{d p}{d t} +\rho\nabla\cdot \uu + \frac {d\rho}{dp}\uu\cdot \nabla p=0. 
\eeq
Combining \eqref{eq6} and \eqref{eq5}, we obtain 
   \beq\label{eq7}
   \frac{d p}{d t}+\kappa \nabla\cdot \uu+\uu\cdot \nabla p=0.
   \eeq  
  Since the constant $\kappa$ is very large  for most slightly compressible fluids in porous media \cite{Muskatbook},  in most of the practical application the third term on the left-hand side of \eqref{eq7} is neglected. This results in the following reduced equation
\beq\label{Eq4pu} \frac{d p}{d t} +\kappa \nabla\cdot \uu=0.\eeq
By rescaling the time variable,  hereafter we assume that $\kappa=1$.  
From \eqref{eq3} and \eqref{Eq4pu} we have the system  
\beq\label{lin-p} 
\begin{cases}
\uu+K(|\nabla p|\nabla p) = 0,\\
p_t +\kappa \nabla\cdot \uu=0.
\end{cases}
\eeq
System \eqref{lin-p} gives a scalar equation for pressure: 
\beq p_t - \nabla\cdot ( K(|\nabla p|)\nabla p )=0.\eeq
The function $K(\xi)$ has the important properties (cf. \cite{ABHI1, HI1}): 

(i) $K: [0,\infty)\to (0,a_0^{-1}]$ and it decreases in $\xi,$  

(ii) Type of degeneracy  \beq\label{i:ineq1}  \frac{c_1}{(1+\xi)^a}\leq K(\xi)\leq \frac{c_2}{(1+\xi)^a},  \eeq

(iii) For all $n\ge 1,$ \beq\label{i:ineq2} c_3(\xi^{n-a}-1)\leq K(\xi)\xi^n\leq c_2\xi^{n-a}, \eeq

 (iv) Relation with its derivative \beq\label{i:ineq3} -aK(\xi)\leq K'(\xi)\xi\leq 0, \eeq
  where $c_1, c_2, c_3$ are positive constants depending on $\Omega$ and $g$. The constant $a\in(0,1)$ is defined by 
  \beq\label{a-const }
   a=\frac{\alpha_N}{\alpha_N+1}= \frac{\deg (g)}{\deg (g)+1}.
  \eeq
The following function is crucial in our estimates. We define 
\beq\label{Hdef}
H(\xi)=\int_0^{\xi^2} K(\sqrt{s}) dx, \text{~for~} \xi\geq 0. 
\eeq
The function $H(\xi)$ can compare with $\xi$ and $K(\xi)$ by
\beq
K(\xi)\xi^2 \leq H(\xi)\leq 2K(\xi)\xi^2, \label{i:ineq4}
\eeq
as a consequence of \eqref{i:ineq2} and \eqref{i:ineq4} we have,
\beq
C(\xi^{2-a}-1) \leq H(\xi) \leq 2C\xi^{2-a}. \label{i:ineq5}
\eeq 
%==================================================================%
For the monotonicity and continuity of the differential operator in \eqref{lin-p} we have the following results. 
\begin{proposition}[cf. \cite{HI1}] One has 

{\rm (i)} For all $y, y' \in \mathbb{R}^d$, 
\beq\label{Qineq}
\left(K(|y'|)y' -K(|y|)y \right)\cdot(y'-y)\geq (1-a)K( \max\{ |y|, |y'|\} )|y' -y|^2 .
\eeq      

 {\rm (ii)} For the vector functions $\s_1, \s_2$, there is a positive constant $C$ such that
\beq\label{Mono}
\int_\Omega \left(K(|\s_1|)\s_1-K(|\s_2|)\s_2\right)\cdot\left(\s_1 -\s_2\right) dx\geq C\omega\norm{\s_1-\s_2}_{L^\beta(\Omega)}^2,
\eeq
where 
\beq
\omega =\left(1+ \max\{\|\s_1\|_{L^\beta(\Omega)} ,  \|\s_2\|_{L^\beta(\Omega)} \}\right)^{-a}.
\eeq\end{proposition}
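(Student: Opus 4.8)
The plan is to establish the pointwise estimate (i) by a fundamental-theorem-of-calculus argument along line segments, and then to deduce the global estimate (ii) by integrating (i) over $\Omega$ and balancing the resulting degenerate weight with a weighted Hölder inequality. Throughout I take $\beta = 2-a$, the exponent for which the weights reassemble correctly.

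For part (i), set $F(y) = K(|y|)y$ and parametrize the segment joining $y$ and $y'$ by $\phi(t) = (1-t)y + ty'$, $t\in[0,1]$, so that
\beq
\left(K(|y'|)y' - K(|y|)y\right)\cdot(y'-y) = \int_0^1 (y'-y)\cdot DF(\phi(t))(y'-y)\,dt .
\eeq
It therefore suffices to bound the quadratic form from below. A direct computation gives $DF(y) = K(|y|)I + \frac{K'(|y|)}{|y|}\,y\otimes y$, so that for any $v\in\mathbb R^d$ one has $v\cdot DF(y)v = K(|y|)|v|^2 + \frac{K'(|y|)}{|y|}(y\cdot v)^2$. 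Since $K'\le 0$ by property (i) and $(y\cdot v)^2\le |y|^2|v|^2$, replacing $(y\cdot v)^2$ by $|y|^2|v|^2$ only lowers the right-hand side, giving $v\cdot DF(y)v \ge \bigl(K(|y|)+K'(|y|)|y|\bigr)|v|^2$. Property \eqref{i:ineq3}, namely $K'(|y|)|y|\ge -aK(|y|)$, then yields $v\cdot DF(y)v\ge (1-a)K(|y|)|v|^2$. Finally, since $|\phi(t)|\le\max\{|y|,|y'|\}$ by convexity and $K$ is decreasing, $K(|\phi(t)|)\ge K(\max\{|y|,|y'|\})$, and integrating over $t\in[0,1]$ gives \eqref{Qineq}.

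For part (ii), integrate \eqref{Qineq} with $y=\s_1(x)$, $y'=\s_2(x)$ over $\Omega$ and set $w=\s_1-\s_2$, $m=\max\{|\s_1|,|\s_2|\}$; the lower bound \eqref{i:ineq1} gives
\beq
\int_\Omega\left(K(|\s_1|)\s_1 - K(|\s_2|)\s_2\right)\cdot w\,dx \ge (1-a)c_1\int_\Omega\frac{|w|^2}{(1+m)^a}\,dx .
\eeq
To recover the $L^\beta$ norm, write $|w|^\beta = \left(|w|^2(1+m)^{-a}\right)^{\beta/2}(1+m)^{a\beta/2}$ and apply Hölder with conjugate exponents $p=2/\beta$ and $q=2/(2-\beta)$; with $\beta=2-a$ the second factor's $L^q$ norm is governed by $(1+m)^\beta$, yielding
\beq
\norm{w}_{L^\beta(\Omega)}^\beta \le \left(\int_\Omega\frac{|w|^2}{(1+m)^a}\,dx\right)^{\beta/2}\left(\int_\Omega(1+m)^\beta\,dx\right)^{a/2}.
\eeq
Raising to the power $2/\beta$ and rearranging gives $\int_\Omega|w|^2(1+m)^{-a}\,dx \ge \norm{w}_{L^\beta(\Omega)}^2\,\norm{1+m}_{L^\beta(\Omega)}^{-a}$. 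Since $m\le|\s_1|+|\s_2|$, the triangle inequality bounds $\norm{1+m}_{L^\beta(\Omega)}\le C\bigl(1+\max\{\norm{\s_1}_{L^\beta(\Omega)},\norm{\s_2}_{L^\beta(\Omega)}\}\bigr)$, hence $\norm{1+m}_{L^\beta(\Omega)}^{-a}\ge C\omega$, and combining the displays proves \eqref{Mono}.

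The main obstacles are two. In part (i), $F$ is not differentiable at the origin, so one must justify the fundamental theorem of calculus when the segment $\phi$ meets $0$; this is harmless because such $t$ form a measure-zero set and the integrand stays bounded, but it should be addressed. In part (ii), the crux is the exponent bookkeeping in the weighted Hölder step: one must check that $\beta=2-a$ is precisely the value making $a\beta/(2-\beta)=\beta$, so that the weight collapses into $\norm{1+m}_{L^\beta(\Omega)}$ and the correct power of $\omega$ emerges after rearrangement.
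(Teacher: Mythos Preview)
The paper does not actually prove this proposition; it is quoted from~\cite{HI1} and stated without proof, so there is no in-paper argument to compare against. Your proof is correct and follows the standard route one would expect in the cited reference: for (i), differentiate $t\mapsto K(|\phi(t)|)\phi(t)$ along the segment, use $K'(\xi)\xi\ge -aK(\xi)$ from \eqref{i:ineq3} to bound the quadratic form of the Jacobian below by $(1-a)K(|\phi(t)|)$, and then use monotonicity of $K$ together with $|\phi(t)|\le\max\{|y|,|y'|\}$; for (ii), combine the pointwise bound with the degeneracy estimate \eqref{i:ineq1} and a H\"older inequality whose exponents are chosen so that the weight $(1+m)^{a\beta/(2-\beta)}$ collapses to $(1+m)^\beta$, which is exactly the identity forced by $\beta=2-a$. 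Your handling of the nondifferentiability of $y\mapsto K(|y|)y$ at the origin (a null set of parameters, bounded integrand) is the right remark; alternatively one can argue as in the proof of Proposition~\ref{Lips} by perturbing the segment away from the origin and passing to the limit.
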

\begin{proposition}\label{Lips}  For all $y, y' \in \mathbb{R}^d$ we have 
\beq\label{Lipchitz}
   \left|K(|y'|)y' -K(|y|)y\right| \leq \sqrt {2(a^2+ 1)}a_0^{-1}|y' -y|.
\eeq  
\end{proposition}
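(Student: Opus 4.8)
The plan is to read the left-hand side as the increment of the vector field $\Phi(y)=K(|y|)y$ and to show that $\Phi$ is globally Lipschitz on $\mathbb R^d$ with the asserted constant. First I would record that $K$ is $C^1$ on $(0,\infty)$: the relation $sg(s)=\xi$ in \eqref{eq4} defines $\xi\mapsto s(\xi)$ as a smooth strictly increasing bijection of $[0,\infty)$ (because $\frac{d}{ds}(sg(s))=g(s)+sg'(s)>0$, using $g>0$, $g'\ge 0$, $g(0)=a_0>0$), so $K(\xi)=1/g(s(\xi))$ is smooth on $(0,\infty)$ and $\Phi\in C^1(\mathbb R^d\setminus\{0\})$. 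The bound then follows from a uniform estimate of the operator norm of the Jacobian, integrated along segments.

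Next I would compute the Jacobian. For $y\neq 0$, writing $r=|y|$, a routine differentiation (using $\partial_j r = y_j/r$) gives
\beq
D\Phi(y)=K(r)\,I+\frac{K'(r)}{r}\,y\otimes y,
\eeq
where $(y\otimes y)v=y\,(y\cdot v)$. Fix a unit vector $v$. By the triangle inequality together with $(A+B)^2\le 2A^2+2B^2$,
\beq
\gabs{D\Phi(y)v}^2 \le 2\gabs{K(r)v}^2 + 2\gabs{\tfrac{K'(r)}{r}\,y\,(y\cdot v)}^2.
\eeq
The first term equals $2K(r)^2$; for the second, Cauchy--Schwarz gives $|y\cdot v|\le |y|=r$, hence $\bigl|\tfrac{K'(r)}{r}y\,(y\cdot v)\bigr|\le |K'(r)|\,r$, and property (iv), namely \eqref{i:ineq3}, yields $|K'(r)|\,r=-K'(r)\,r\le aK(r)$. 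Combining these and then invoking $K(r)\le a_0^{-1}$ from property (i),
\beq
\gabs{D\Phi(y)v}^2 \le 2K(r)^2+2a^2K(r)^2 = 2(a^2+1)K(r)^2 \le 2(a^2+1)a_0^{-2},
\eeq
so that $\norm{D\Phi(y)}\le \sqrt{2(a^2+1)}\,a_0^{-1}$ for every $y\neq 0$.

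Finally I would integrate along the segment $y(t)=y+t(y'-y)$, $t\in[0,1]$, writing $\Phi(y')-\Phi(y)=\int_0^1 D\Phi(y(t))(y'-y)\,dt$ and passing the norm under the integral. The only point needing attention, and the main (though mild) obstacle, is the apparent singularity of $D\Phi$ at the origin caused by the factor $1/r$. This is harmless: the segment meets $0$ in at most one parameter value, a null set, while the operator-norm bound just obtained is uniform off the origin; alternatively one uses the continuity of $\Phi$ and approximates $y,y'$ by points whose connecting segment avoids $0$. Either way the uniform bound passes to the integral and delivers
\beq
\gabs{K(|y'|)y'-K(|y|)y}=\gabs{\Phi(y')-\Phi(y)}\le \sqrt{2(a^2+1)}\,a_0^{-1}\,\gabs{y'-y},
\eeq
which is the claim.
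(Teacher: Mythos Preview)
Your proof is correct and follows essentially the same route as the paper: both differentiate $\Phi(y)=K(|y|)y$ along the segment $[y,y']$, bound the derivative using \eqref{i:ineq3} together with the crude inequality $(A+B)^2\le 2A^2+2B^2$ and the upper bound $K\le a_0^{-1}$, and dispose of the origin by an approximation argument. The only cosmetic difference is that the paper invokes a mean-value formulation while you use the integral form of the fundamental theorem of calculus; your version is arguably cleaner since the mean value theorem does not hold verbatim for vector-valued maps.
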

\begin{proof}
{\it Case 1}: The origin does not belong to the segment connect  $y'$ and  $y$.  Let $\ell(t)=ty'+(1-t)y, t\in[0,1]$. Define $ h(t)=K(|\ell(t)|)\ell(t)$  for $t\in[0,1]$.
By the mean value theorem, there is $t_0\in[0,1]$ with $\ell(t_0)\neq 0$, such that
\begin{align*}
|K(|y'|)y' -K(|y|)y|^2 &= | h(1) - h(0)|^2  =|h'(t_0)|^2 \\
   &= \left| K'(|\ell(t_0)|)\frac{\ell(t_0)\cdot \ell'(t_0) }{|\ell(t_0)|}\ell(t_0)   +K(|\ell(t_0)|)\ell'(t_0) ) \right|^2.
\end{align*} 
Using \eqref{i:ineq3} and Minkowski's inequality we obtain    
\beqs   
|K(|y'|)y' -K(|y|)y|^2 \le 2 | K(|\ell(t_0)|)|^2\left(a^2 \left|\frac{\ell(t_0)\cdot \ell'(t_0) }{|\ell(t_0)|^2}\ell(t_0)  \right|^2   + |\ell'(t_0) )|^2\right).  
\eeqs
The \eqref{Lipchitz} follows by the boundedness of $K(\cdot)\le a_0^{-1}$.  

{\it Case 2}: The origin belongs to the segment connect $y', y$. We replace $y'$ by some $y_\varep\neq 0$ so that $0\notin [y_\varep, y]$ and $y_\varep \to 0$ as $\varep \to 0$. Apply the above inequality for $y$ and $y_\varep$, then let $\varep\to 0$.
\end{proof} 
%==================================================================%

{\bf Notations.} Let $L^2(\Omega)$ be the set of square integrable functions on $\Omega$ and $( L^2(\Omega))^d$ the space of $d$-dimensional vectors which have all components in $L^2(\Omega)$. 

We denote $(\cdot, \cdot)$ the inner product in either $L^2(\Omega)$ or $(L^2(\Omega))^d$ that is
\beqs
( \xi,\eta )=\int_\Omega \xi\eta dx \quad \text{ or } (\boldsymbol{\xi},\boldsymbol \eta )=\int_\Omega \boldsymbol{\xi}\cdot \boldsymbol{\eta} dx. 
\eeqs      
 The notation $\norm {\cdot}$ will means scalar norm $\norm{\cdot}_{L^2(\Omega)}$ or vector norm $\norm{\cdot}_{(L^2(\Omega))^d}$. 

For $1\le q\le +\infty$ and $m$ any nonnegative integer, let
\beqs
W^{m,q}(\Omega) = \{f\in L^q(\Omega), D^\alpha f\in L^q(\Omega), |\alpha|\le m \}
\eeqs 
denote a Sobolev space endowed with the norm 
\beqs
\norm{f}_{m,q} =
\left( \sum_{|\alpha|\le m} \norm{D^\alpha f}^q_{L^q(\Omega)} \right)^{\frac 1q}.
\eeqs    
Define $H^m(\Omega)= W^{m,2}(\Omega)$ with the norm $\norm{\cdot}_m =\norm{\cdot }_{m,2}$. 

For functions $p,u$ and vector-functions ${\mathbf v, } \s, \uu $ we use short hand notations  
\beqs
\norm{p(t)} = \norm{ p(\cdot, t)}_{L^2(\Omega)},\quad \norm{\uu(t)} = \norm{ \uu(\cdot, t)}_{L^2(\Omega)}, \quad \norm{\s(t)}_{L^\beta(\Omega)} = \norm{ \s(\cdot, t)}_{L^\beta(\Omega)} 
\eeqs  
and
\beqs
 u^0(\cdot) =  u(\cdot,0), \quad  {\mathbf v}^0(\cdot) =  {\mathbf v} (\cdot,0).  
\eeqs
Throughout this paper the constants 
$$\beta =2-a,\quad \gamma = \frac a{2-a}.$$ 
The arguments $C, C_1$ will represent for positive generic constants and their values  depend on exponents, coefficients of polynomial  $g$,  the spatial dimension $d$ and domain $\Omega$, independent of the initial and boundary data, size of mesh and time step. These constants may be different place by place. 

\section{The expanded mixed finite element methods}\label{mixedFEM}
%In this section, we derive the semidiscrete expanded mixed finite element method for the problem \eqref{syseq} and then a fully discrete version.  

We introduce the new variable $\s=\nabla p$ to \eqref{lin-p} and study the initial value boundary problem (IVBP):  
\beq\label{syseq}
\begin{cases}
     p_t +  \nabla \cdot \uu =f,\\
     \uu + K(|\s|)\s=0, \\
     \s- \nabla p = 0,
\end{cases}
\eeq 
 for all  $x\in  \Omega, t\in(0,T)$, where $f: \Omega\times(0,T)\to\mathbb R $, $f\in C^1([0,T];L^\infty(\Omega)).$ 

  The initial and boundary conditions:
\beqs
p(x,0)=p_0(x) \text {~in~} \Omega , \quad 
     p(x,t)=\psi(x,t) \text{ ~on~} \Gamma \times(0,T),
\eeqs
 we also require  at $t=0$:  $p_0(x) =\psi(x,0)$ on boundary $\Gamma$. 

To deal with the non-homogeneous boundary condition, we extend the Dirichlet boundary data  from boundary $\Gamma$ to the whole domain $\Omega$ (see \cite{HI1,JK95, MC70}). Let $\Psi(x,t)$ be a such extension.  
Let $\overline{p} = p-\Psi$. Then $\bar p(x,t)=0 \text{ ~on~} \Gamma \times(0,T).$ System \eqref{syseq} rewrites as  
\beq\label{maineq}
\begin{cases}
     \bar p_t +  \nabla \cdot \uu =-\Psi_t +f,\\
     \uu + K(|\s|)\s=0, \\
     \s- \nabla \bar p = \nabla \Psi, 
\end{cases}
\eeq 
for all  $(x,t)\in  \Omega\times (0,T)$, where $\bar{p}(x,0)=p_0(x)-\Psi(x,0)=\bar{p}_0(x).$ 
 
Define  $W=L^2(\Omega)$, $\tilde W= (L^2(\Omega))^d,$ and the Hilbert space    
$$V= H({\rm div}, \Omega)= \left\{ \vv \in  (L^2(\Omega))^d, \nabla \cdot \vv \in L^2(\Omega) \right\}$$
with the norm defined by $\norm{ \vv}_V^2 = \norm{\vv}^2 + \norm{ \nabla \cdot \vv }^2.$

The variational formulation of \eqref{maineq} is defined as the following: Find $(p,\uu,\s) :[0,T] \rightarrow W\times  V \times \tilde W$ such that 
\begin{subequations}\label{weakform}
\begin{align}
&\label{W1}    ( \bar p_t, w) +  \left(\nabla\cdot \uu, w\right) =(f-\Psi_t ,w),   \quad && \forall w\in W,    \\
&\label{W2}     (\uu, \zz)  + ( K(|\s| )\s ,\zz )=0, \quad && \forall \zz\in \tilde W,\\
 &\label{W3}     (\s, \vv)  + ( \bar p , \nabla\cdot \vv )=(\nabla \Psi, \vv), \quad &&\forall  \vv\in V
\end{align}
\end{subequations} 
with $\bar{p}(x,0)=\bar{p}_0(x).$ 

\vspace{0.2cm}
{\bf Semidiscrete method.} Let $\{\mathcal T_h\}_h$ be a family of quasi-uniform triangulations of $\Omega$ with $h$ being the maximum diameter of the element.
Let $V_h$ be the Raviart-Thomas-N\'ed\'elec spaces ~\cite{Ned80, RavTho77a} of order $r\ge 0$ or Brezzi-Douglas-Marini spaces \cite{BDM85} of index $r$ over each triangulation $\mathcal T_h$, $W_h$ the space of discontinuous piecewise polynomials of degree $r$ over $\mathcal T_h$, $\tilde W_h$ the n-dimensional vector space of discontinuous piecewise polynomials of degree $r$ over $\mathcal T_h$.  Let $W_h \times V_h \times  \tilde W_h$ be the mixed element spaces approximating to $W\times V \times\tilde W$.     
  
We use the standard $L^2$-projection operator  (see \cite{Ciarlet78})  
$\pi: W \rightarrow W_h$,   $\pi: \tilde W \rightarrow \tilde W_h$ satisfying
\beqs
( \pi w , \nabla \cdot \vv_h ) = ( w , \nabla \cdot \vv_h )  
\eeqs
 for all  $w\in W, \vv_h \in V_h,$ and
\beqs 
   ( \pi \zz , \zz_h ) = ( \zz , \zz_h )
\eeqs
 for all  $\zz\in \tilde W, \zz_h\in \tilde W_h.$ 
 
Also we use  $H$-div projection $\Pi : V
\rightarrow V_h$ defined by
\beqs
( \nabla \cdot \Pi \vv , w_h )
= ( \nabla \cdot \vv , w_h )
\eeqs
for all $w_h \in W_h$.

These projections have well-known approximation properties as in~\cite{BF91, JT81}. %Below are the standard approximation properties for these projections  

(i) $\norm{\pi w}\le \norm{w}$ holds for all $w\in L^2(\Omega)$.
   
(ii) There exist positive constants $C_1, C_2$ such that
\begin{equation}
\label{prjpi}
\begin{split}
\norm{\pi w - w }_{L^\alpha(\Omega)} \leq C_1 h^m \norm{w}_{m,\alpha} \text{ and } 
\norm{ \pi \zz - \zz }_{L^\alpha(\Omega)} \leq C_2 h^m \norm{\zz}_{m,\alpha}, 
\end{split}
\end{equation}
for all $w \in W^{m,\alpha}(\Omega)$, $\zz\in (W^{m,\alpha}(\Omega))^d$,  $0\le m \le r+1, 1\le\alpha\le \infty$. Here $\norm {\cdot}_{m,\alpha}$ denotes a standard norm in Sobolev space $W^{m,\alpha}$. In short hand, when $\alpha=2$ we write \eqref{prjpi} as   
\beqs
\norm{\pi w - w } \leq C_1 h^m \norm{w}_{m}, \quad  \text { and }\quad \norm{ \pi \zz - \zz } \leq C_2 h^m \norm{\zz}_m. 
\eeqs

(iii) There exists a positive $C_3$ such that
\beq
\label{prjPi}
\norm{\Pi \vv - \vv}_{L^\alpha(\Omega)}  \leq C_3 h^m \norm{ \vv }_{m,\alpha}
\eeq
for any $\vv \in \left( W^{m,\alpha} ( \Omega ) \right)^d,$  $1/\alpha \leq m \leq r+1$, $ 1\le \alpha \le \infty $.

Because of the commuting relation between $\pi, \Pi$ and the
divergence (i.e., that $\nabla \cdot \Pi \uu = \pi ( \nabla \cdot \uu
)$, we also have the bound
\begin{equation}
\label{prjdiv}
\| \nabla \cdot ( \Pi \vv - \vv ) \|_{L^\alpha(\Omega)} \leq C_1 h^m \norm{\nabla \cdot \vv }_{m,\alpha},
\end{equation}
provided $\nabla \cdot \vv \in W^{m,\alpha}(\Omega)$ for $1 \leq m \leq r+1$.

 The semidiscrete expanded mixed formulation of~\eqref{weakform} can read as following: Find $(p_h,\uu_h,\s_h):[0,T] \rightarrow W_h\times V_h\times \tilde W_h$ such that
\begin{subequations}\label{semidiscreteform}
\begin{align}
  &\label{Semi1}  ( \bar p_{h,t}, w_h) + \left(\nabla\cdot \uu_h, w_h\right) =(f-\Psi_t,w_h),   &&\forall w_h\in W_h,    \\
  &\label{Semi2}   (\uu_h,\zz_h)  + ( K(|\s_h| )\s_h   , \zz_h )=0,  &&\forall  \zz_h\in \tilde W_h,\\
  &\label{Semi3}    (\s_h,\vv_h)  + ( \bar p_h , \nabla\cdot \vv_h )= (\nabla \Psi, \vv_h ),  &&\forall  \vv_h\in V_h,
\end{align}
\end{subequations} 
where $\bar{p}_h(x,0)=\pi \bar{p}_0(x)$ and  $\bar p_h =p_h -\pi\Psi.$  

\vspace{0.2cm}
{\bf Fully discrete method.}
We use backward Euler for time-difference discretization. Let $N$ be the positive integer, $t_0=0 < t_1 <\ldots < t_N= T$ be partition interval $[0,T]$ of $N$ sub-intervals, and let $\Delta t = t_{n} - t_{n-1}=T/N$  be the $n$-th time step size, $t_n=n\Delta t$ and $\varphi^n = \varphi(\cdot, t_n)$. 

The discrete time expanded mixed finite element approximation to \eqref{weakform} is defined as follows:  Find $ (p_h^n, \uu_h^n,\s_h^n)\in W_h\times V_h\times \tilde W_h$, $n=1,2,\dots, N$, such that 
\begin{subequations}\label{fullydiscreteform}
\begin{align}
&\label{fully1}    \left( \frac{\bar p_h^n - \bar p_h^{n-1}}{\Delta t }, w_h\right) +  \left(\nabla\cdot \uu_h^n, w_h\right) =(f^n-\Psi_t^n, w_h ), &&\forall w_h\in W_h, \\
&\label{fully2}     (\uu_h^n, \zz_h)  + ( K(|\s_h^n| )\s_h^n ,\zz_h )=0, &&\forall \zz_h\in \tilde W_h,\\
 &\label{fully3}     (\s_h^n,{\bf v_h})  + ( \bar p_h^n , \nabla\cdot  \vv_h )=(\nabla \Psi^n, \vv_h ), &&\forall \vv_h\in W_h.
\end{align}
\end{subequations} 
The initial approximations are chosen as follows:
$$
\bar p_h^0(x)=\pi \bar p_0(x),\quad  \s_h^0(x)=\pi \nabla p^0(x), \quad  \uu_h^0(x) =K(|\s_h^0(x)|)\s_h^0(x) 
$$
for all $x\in \Omega$.
%------------------------------------------------------------
\section{Estimates  of solutions } \label{prior-est}
Using the theory of monotone operators \cite{MR0259693,s97,z90}, the authors in \cite {HIKS1} proved the global existence of weak solution $p(x,t)$ of equation \eqref{maineq}. Furthermore this solution is unique and  belongs to $C([0,T),L^\alpha(\Omega))$, $\alpha\ge 1$ and $L_{loc}^{\beta}([0,T),W^{1,\beta}(\Omega)),$ $p_t \in L^{\beta'}_{loc}\bigl([0,T), (W^{1,\beta}(\Omega))'\bigr)\cap  L^2_{loc}\bigl([0,T), L^2(\Omega)\bigr)$
  provided the initial data $p_0(x) \in L^2(\Omega)\cap W^{1,\beta}(\Omega)$ and $\Psi,$ $f$  sufficiently smooth. In fact, in \cite{HKP1, HKP2} the authors show that $ p(x; t)\in L^\infty((0,T);L^\infty(\Omega))\cap L^\infty((0, T);W^{1,\beta}(\Omega))$ and  $p_t(x; t)\in L^\infty_{loc}((0, T);L^2(\Omega))$. Our aim explores the properties of the solutions, we assume that $p(x; t)$, initial data and boundary data have sufficiently regularities both in $x$ and $t$ variables so that our calculations are valid. 

We begin with the Poincar\'e-Sobolev inequality with a specific weight which is essential in our estimate later.      
 \begin{lemma}[cf. \cite{HI2}] Let $\Omega$ be an open bounded domain in $\mathbb{R}^d$ and $\xi(x)\ge 0$ be defined on $\Omega$. Then for any function $u(x)$ vanishing on the boundary $\partial \Omega$ there is a positive constant $C$ depending of $\Omega,$ $\deg(g)$ and coefficients of $g$ such that.
\beq\label{PSineq}
\norm{u}_{L^{\beta^*}}^2 \le C \norm{K^{\frac 12}(\xi) \nabla u}^2\left(1+ \norm{K^{\frac 12}(\xi) \xi}^2\right)^{\gamma},
\eeq
where $ \beta^* = \frac{d\beta}{d-\beta}.$
\end{lemma}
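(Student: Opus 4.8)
The plan is to reduce \eqref{PSineq} to the classical Sobolev--Poincar\'e inequality and then to absorb the weight $K^{\frac12}(\xi)$ by a H\"older splitting tuned to the exponent $\beta=2-a$. Since $u$ vanishes on $\partial\Omega$ and $\beta=2-a\in(1,2)$ satisfies $\beta<d$ for every $d\ge 2$, the embedding $W_0^{1,\beta}(\Omega)\hookrightarrow L^{\beta^*}(\Omega)$ with $\beta^*=\frac{d\beta}{d-\beta}$ yields
\beqs
\norm{u}_{L^{\beta^*}(\Omega)}^2 \le C\norm{\nabla u}_{L^\beta(\Omega)}^2,
\eeqs
so it suffices to bound $\norm{\nabla u}_{L^\beta(\Omega)}^2$ by the right-hand side of \eqref{PSineq}.

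To this end I would write, pointwise, $|\nabla u|^\beta=\bigl(K^{\frac12}(\xi)|\nabla u|\bigr)^\beta K(\xi)^{-\frac\beta2}$ and apply H\"older's inequality to $\int_\Omega|\nabla u|^\beta\,dx$ with the conjugate exponents $p=\frac2\beta$ and $p'=\frac2a$; these are indeed conjugate because $2-\beta=a$. The first factor then collects into $\bigl(\int_\Omega K(\xi)|\nabla u|^2\,dx\bigr)^{\beta/2}=\norm{K^{\frac12}(\xi)\nabla u}^{\beta}$, since $\beta p=2$, while the second factor becomes $\bigl(\int_\Omega K(\xi)^{-\beta/a}\,dx\bigr)^{a/2}$.

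The remaining task is to control the negative power of $K$. Using the lower bound in \eqref{i:ineq1}, namely $K(\xi)^{-1}\le c_1^{-1}(1+\xi)^a$, I obtain $K(\xi)^{-\beta/a}\le C(1+\xi)^{\beta}\le C(1+\xi^{2-a})$, whence $\int_\Omega K(\xi)^{-\beta/a}\,dx\le C\bigl(|\Omega|+\int_\Omega\xi^{2-a}\,dx\bigr)$. Next I invoke \eqref{i:ineq2} with $n=2$, which gives the pointwise bound $\xi^{2-a}\le c_3^{-1}K(\xi)\xi^2+1$ and hence $\int_\Omega\xi^{2-a}\,dx\le C\bigl(1+\norm{K^{\frac12}(\xi)\xi}^2\bigr)$. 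Combining these estimates shows $\bigl(\int_\Omega K(\xi)^{-\beta/a}\,dx\bigr)^{a/2}\le C\bigl(1+\norm{K^{\frac12}(\xi)\xi}^2\bigr)^{a/2}$.

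Putting the two H\"older factors together and raising the resulting bound for $\int_\Omega|\nabla u|^\beta\,dx$ to the power $2/\beta$ gives
\beqs
\norm{\nabla u}_{L^\beta(\Omega)}^2\le C\,\norm{K^{\frac12}(\xi)\nabla u}^2\bigl(1+\norm{K^{\frac12}(\xi)\xi}^2\bigr)^{a/\beta},
\eeqs
and since $a/\beta=a/(2-a)=\gamma$, chaining this with the Sobolev--Poincar\'e estimate closes the argument. The one genuinely delicate point is the exponent bookkeeping: the H\"older exponents must be chosen so that the favorable factor lands exactly on the weighted $L^2$-norm $\norm{K^{\frac12}(\xi)\nabla u}$ and so that the power of the degeneracy factor comes out to be precisely $\gamma$; the rest is a direct application of the structural bounds \eqref{i:ineq1}--\eqref{i:ineq2}.
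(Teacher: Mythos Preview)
The paper does not supply a proof of this lemma; it is quoted with attribution to \cite{HI2} and used as a black box. Your argument is correct and self-contained: reduce to the classical Sobolev--Poincar\'e embedding $W_0^{1,\beta}\hookrightarrow L^{\beta^*}$, then recover the weighted norm by the H\"older split with exponents $2/\beta$ and $2/a$ (which are conjugate precisely because $\beta+a=2$), and finally control the negative power of $K$ via \eqref{i:ineq1} and \eqref{i:ineq2}. The exponent bookkeeping is right, and $a/\beta=\gamma$ closes the loop. This is exactly the natural route and almost certainly coincides with the proof in \cite{HI2}; there is nothing to correct.
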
 
%----------------------------------------------------------
\begin{theorem}\label{ph}  Let $(p, \uu,\s)$ solve problem \eqref{weakform}. 

{\rm (i)} There is a positive constant $C$ such that for any $t\in (0,T)$,  
  \beq\label{Bph}
\norm{\bar p(t)}^2 + \int_0^t \norm{ K^{\frac 12}(|\s(\tau)| )\s(\tau)}^2d\tau \le \norm{\bar p^0}^2 +C \int_0^t A(\tau)d\tau, 
\eeq   
where 
$$
A=A(t)=\norm{\nabla \Psi(t)}^2+ \norm{(f-\Psi_t)(t)}_{L^r(\Omega)}+  \norm{(f-\Psi_t)(t)}_{L^r(\Omega)}^{\frac {\beta}{\beta-1}}
$$ 
with $r=\frac{d\beta}{\beta(d+1)-d}.$
Consequently, 
 \beqs
\norm{p(t)}^2 + \int_0^t \norm{ K^{\frac 12}(|\s(\tau)| )\s(\tau)}^2d\tau \le \norm{\bar p^0}^2 + \norm{\Psi}^2 +C \int_0^t A(\tau)d\tau. 
\eeqs 
  
{\rm (ii)} There exist positive constants $C, C_1$ such that for any $t\in (0,T)$,
\beq\label{suh}
\begin{split}
 \norm{\uu(t)}^2+ \norm{\s(t)}_{L^\beta(\Omega)}^{\beta} \le C\left(\norm{\bar p^0}^2+1\right)+ C\int_0^t e^{- C_1(t-\tau)}(\Lambda+B)(\tau)d\tau,
\end{split}
\eeq
where 
\beq\label{lda}
\Lambda=\Lambda(t) =\int_0^t A(\tau)d\tau,
\eeq
\beq\label{g3}
B=B(t)=A(t)+\norm{\nabla \Psi_t(t)}^2 +\norm{(\Psi_t -f)(t)}^2.
\eeq
\end{theorem}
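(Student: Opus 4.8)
The plan is to prove both parts by the energy method, exploiting the structural estimates \eqref{i:ineq4}--\eqref{i:ineq5} and the weighted Poincar\'e--Sobolev inequality \eqref{PSineq}; throughout I use the regularity assumed just before the theorem so that the chosen test functions and the time-differentiations below are legitimate. For Part (i) I would first test \eqref{W1}, \eqref{W2}, \eqref{W3} with $w=\bar p$, $\zz=\s$ and $\vv=\uu$. Using \eqref{W2} to write $(\uu,\s)=-\norm{K^{\frac12}(|\s|)\s}^2$ and \eqref{W3} to replace $(\nabla\cdot\uu,\bar p)$, these combine into the energy identity
\beqs
\tfrac12\ddt\norm{\bar p}^2 + \norm{K^{\frac12}(|\s|)\s}^2 = (f-\Psi_t,\bar p) - (\nabla\Psi,\uu).
\eeqs
The term $(\nabla\Psi,\uu)$ is harmless: since $\uu=-K(|\s|)\s$ and $K\le a_0^{-1}$ we have $\norm{\uu}\le a_0^{-\frac12}\norm{K^{\frac12}(|\s|)\s}$, so Cauchy--Schwarz and Young absorb a quarter of $\norm{K^{\frac12}(|\s|)\s}^2$ into the left side and leave $C\norm{\nabla\Psi}^2$.

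The real work in Part (i) is the forcing term $(f-\Psi_t,\bar p)$. I would estimate it by H\"older with exponents $r$ and $\beta^*$, noting that $r=(\beta^*)'$ is exactly the conjugate of $\beta^*=\frac{d\beta}{d-\beta}$, and then apply \eqref{PSineq} with $u=\bar p$, $\xi=|\s|$. Here $\nabla\bar p=\s-\nabla\Psi$, so $\norm{K^{\frac12}(|\s|)\nabla\bar p}\le\norm{K^{\frac12}(|\s|)\s}+a_0^{-\frac12}\norm{\nabla\Psi}$ and $\norm{K^{\frac12}(|\s|)|\s|}^2=\norm{K^{\frac12}(|\s|)\s}^2$. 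Because $\tfrac{1+\gamma}2=\tfrac1\beta$, the right side of \eqref{PSineq} grows like $X^{1/\beta}$ in the large regime, with $X:=\norm{K^{\frac12}(|\s|)\s}^2$; Young's inequality with conjugate exponents $\beta$ and $\frac{\beta}{\beta-1}$ then produces a small multiple of $X$ (absorbed) plus $C\norm{f-\Psi_t}_{L^r}^{\beta/(\beta-1)}$, while the bounded/small-$X$ regime contributes the linear term $\norm{f-\Psi_t}_{L^r}$. Together these give precisely the quantity $A$. Integrating in $t$ yields \eqref{Bph}, and the consequence follows from $p=\bar p+\Psi$ and the triangle inequality.

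For Part (ii) the natural energy is $\mathcal H(t)=\int_\Omega H(|\s|)\,dx$, which by \eqref{i:ineq4}--\eqref{i:ineq5} satisfies $X\le\mathcal H\le 2X$, controls $\norm{\s}_{L^\beta(\Omega)}^\beta$ from above, and obeys $\norm{\uu}^2\le a_0^{-1}\mathcal H$. Differentiating the integrand gives $\ddt\mathcal H=-2(\uu,\s_t)$; differentiating \eqref{W3} in time and testing with $\vv=\uu$, together with \eqref{W1} tested by $w=\bar p_t$, converts this into
\beqs
\tfrac12\ddt\mathcal H + \norm{\bar p_t}^2 = (f-\Psi_t,\bar p_t) - (\nabla\Psi_t,\uu),
\eeqs
whose right side is bounded by $\tfrac12\norm{\bar p_t}^2+\varepsilon\norm{\uu}^2+C(\norm{f-\Psi_t}^2+\norm{\nabla\Psi_t}^2)$, the new forcing matching the extra terms of $B$. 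The decisive point is to manufacture dissipation proportional to $\mathcal H$. For this I would use \eqref{W3} with $\vv=\uu$ and \eqref{W2} with $\zz=\s$ to get the pointwise identity $X=(\bar p,\nabla\cdot\uu)-(\nabla\Psi,\uu)$, whence, with $\mathcal H\le 2X$, $\norm{\nabla\cdot\uu}\le\norm{\bar p_t}+\norm{f-\Psi_t}$ from \eqref{W1}, and $\norm{\uu}\le a_0^{-\frac12}\mathcal H^{\frac12}$,
\beqs
\mathcal H \le C\norm{\bar p}\bigl(\norm{\bar p_t}+\norm{f-\Psi_t}\bigr) + \tfrac12\mathcal H + C\norm{\nabla\Psi}^2.
\eeqs
Since Part (i) gives $\norm{\bar p(t)}^2\le\norm{\bar p^0}^2+C\Lambda(t)$ (this is where $\Lambda$ enters), Young's inequality yields a lower bound $\norm{\bar p_t}^2\ge c\,\mathcal H-C(\norm{\bar p^0}^2+\Lambda+B)$. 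Substituting back and taking $\varepsilon$ small produces $\ddt\mathcal H+C_1\mathcal H\le C(\norm{\bar p^0}^2+\Lambda+B)$; Gronwall with integrating factor $e^{C_1t}$, the constant initial bound $\mathcal H(0)\le C\norm{\nabla p^0}_{L^\beta(\Omega)}^\beta$, and the equivalence $\norm{\uu}^2+\norm{\s}_{L^\beta(\Omega)}^\beta\le C\mathcal H+C$ then give \eqref{suh}, the constant $\norm{\bar p^0}^2$ integrating against the kernel into $C(\norm{\bar p^0}^2+1)$ while $\Lambda$ and $B$ remain inside the convolution.

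I expect the coercivity step in Part (ii) to be the main obstacle: the energy identity only supplies dissipation in $\norm{\bar p_t}$, and trading it for dissipation in $\mathcal H$ hinges on the auxiliary identity for $X$ and, crucially, on the $L^2$-bound of $\bar p$ from Part (i); keeping the exponent bookkeeping straight (so that $\Lambda$ sits inside the convolution and $\norm{\bar p^0}^2$ outside) requires care. In Part (i) the delicate point is matching $r=(\beta^*)'$ and splitting the estimate of $(f-\Psi_t,\bar p)$ into the two regimes that yield both the linear and the $\frac{\beta}{\beta-1}$-power terms of $A$. A secondary technical issue, already flagged in the text, is justifying the time-differentiation of \eqref{W3} and the use of $\s_t$, which I would handle via the assumed regularity (or, if needed, a difference-quotient/Galerkin approximation).
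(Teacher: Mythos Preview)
Your proposal is correct and follows essentially the same route as the paper. Part (i) is identical: the same test functions, the same H\"older/weighted Poincar\'e--Sobolev estimate with $r=(\beta^*)'$, and the same absorption via Young's inequality. In Part (ii) you derive the same energy identity $\tfrac12\ddt\mathcal H+\norm{\bar p_t}^2=(f-\Psi_t,\bar p_t)-(\nabla\Psi_t,\uu)$; the only variation is in manufacturing the dissipation in $\mathcal H$: you use the algebraic identity $X=(\bar p,\nabla\cdot\uu)-(\nabla\Psi,\uu)$ together with $\norm{\nabla\cdot\uu}\le\norm{\bar p_t}+\norm{f-\Psi_t}$ to turn $\norm{\bar p_t}^2$ into a lower bound on $\mathcal H$, whereas the paper simply adds the differential inequality \eqref{dervp} from Part (i) (which already carries the term $\norm{K^{1/2}(|\s|)\s}^2\ge\tfrac12\mathcal H$) to \eqref{Bbarp}, handling the resulting cross term $(\bar p,\bar p_t)$ by Cauchy. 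Both devices yield $\ddt\mathcal H+C_1\mathcal H\le C(\norm{\bar p^0}^2+\Lambda+B)$ and the same Gronwall conclusion; the paper's addition trick is marginally shorter, while yours makes the dependence on $\norm{\bar p}$ (and hence on $\Lambda$) a bit more transparent.
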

%==========================
\begin{proof}
(i) By selecting $w=\bar p$, $\zz=\s$ and $\vv=\uu$ at each time level in \eqref{weakform}  we have 
\begin{align*}
  &( \bar p_t, \bar p) + \left(\nabla\cdot \uu, \bar p\right) =(f-\Psi_t , \bar p), \\
  &(\uu,\s)  + ( K(|\s| )\s   , \s )=0, \\
  &(\s,\uu)  + ( \bar p , \nabla\cdot \uu )= (\nabla \Psi, \uu ).
\end{align*}
Adding three above equations implies 
\beq\label{Ineq}
\begin{aligned}
\frac 12 \frac{d}{dt} \norm{\bar p}^2 + \norm{ K^{\frac 12}(|\s| )\s}^2 &= (f-\Psi_t , \bar p) - (\nabla \Psi, \uu )\\
&\le(f-\Psi_t, \bar p) +\frac 1 2(\norm { \nabla \Psi}^2 +\norm{ \uu }^2).
\end{aligned}
\eeq
Using \eqref{W2} with $\zz=\uu \in \tilde W $, we have 
 \beqs
   \norm{\uu}^2 =-  ( K(|\s| )\s   , \uu )\le \norm{K^{\frac 12}(|\s| )\s}\norm{\uu},
   \eeqs
 which yields 
 \beq\label{uuh}
 \norm{\uu}\le \norm{K^{\frac 12}(|\s| )\s}.
 \eeq  
Thus \eqref{Ineq} and \eqref{uuh} give   
\beq\label{mainineq}
\frac{d}{dt} \norm{\bar p}^2 + \norm{ K^{\frac 12}(|\s| )\s}^2 \le 2(f-\Psi_t , \bar p) +\norm { \nabla \Psi}^2. 
\eeq  
 By H\"oder's inequality and \eqref{PSineq},	  
\beq\label{d1}
\begin{split}
( f-\Psi_t ,\bar p  ) &\le \norm{f-\Psi_t}_{L^r(\Omega)}\norm{\bar p}_{L^{\beta^*}}\\
      &\le C \norm{f-\Psi_t}_{L^r(\Omega)}\norm{K^{\frac 12}(|\s|)\nabla \bar  p}\left( 1+\norm{K^{\frac 12}(|\s|)\s} \right)^{\gamma}.
\end{split}
\eeq
To estimate the second term on the right hand side of \eqref{d1} we integrate by part \eqref{W3} and then select $\vv =K(|\s|)\nabla \bar  p \in V$. It follows that
\beqs
( \nabla \bar p , K(|\s|)\nabla \bar  p )=(\s,K(|\s|)\nabla \bar  p)  - (\nabla \Psi, K(|\s|)\nabla \bar  p )
\eeqs
which shows that  
\begin{align*}
  \norm{K^{\frac 12}(|\s|)\nabla \bar  p}^2&=(\s-\nabla \Psi,K(|\s|)\nabla \bar  p)\le \norm {K^{\frac 12}(|\s|)(\s-\nabla \Psi)}\norm{K^{\frac 12}(|\s|)\nabla \bar  p}.
\end{align*}
This, triangle inequality and the upper boundedness of $K(\cdot)$ give 
\beq\label{weightnorm}
\norm{K^{\frac 12}(|\s|)\nabla \bar  p}\le \norm {K^{\frac 12}(|\s|)\s} +C\norm{\nabla \Psi}.
\eeq
Combining \eqref{d1}, \eqref{weightnorm} and Young's inequality yield
\beq\label{d2}
\begin{aligned}
(f -\Psi_t ,\bar p  ) &\le C \norm{f-\Psi_t}_{L^r(\Omega)}\left(\norm {K^{\frac 12}(|\s|)\s} +\norm{\nabla \Psi}\right)\left( 1+\norm{K^{\frac 12}(|\s|)\s} \right)^{\gamma}\\
&\le C\norm{f-\Psi_t}_{L^r(\Omega)}\left(1+\norm {K^{\frac 12}(|\s|)\s}^{\gamma+1} +\norm{\nabla \Psi}^{\gamma+1}\right)\\
%&= C\norm{f-\Psi_t}_{L^r(\Omega)} +  C\norm{f-\Psi_t}_{L^r(\Omega)} \Big(\norm {K^{\frac 12}(|\s|)\s}^{\gamma+1} +\norm{\nabla \Psi}^{\gamma+1} \Big)\\
&\le \frac 12 \norm{K^{\frac 12}(|\s|)\s}^2 + C A(t).
\end{aligned}
\eeq
It follows from \eqref{mainineq} and \eqref{d2} that    
 \beq\label{dervp}
\frac{d}{dt} \norm{\bar p}^2 + \norm{ K^{\frac 12}(|\s| )\s}^2 \le C A(t). 
\eeq  
Integrating \eqref{dervp} from $0$ to $t$ we obtain \eqref{Bph}.

(ii) Choosing $w=\bar p_t$, $\zz=\s_t$ in \eqref{W1}, \eqref{W2}, differentiating  \eqref{W3} with respect $t$ and selecting $\vv=\uu$ we find that  
\begin{align*}
  &( \bar p_t, \bar p_t) + \left(\nabla \cdot \uu, \bar p_t\right) =(f-\Psi_t , \bar p_t), \\
  &(\uu,\s_t)  + ( K(|\s| )\s   , \s_t )=0, \\
  &(\s_t,\uu)  + (\bar p_t ,  \nabla \cdot \uu )= (\nabla \Psi_t, \uu ).
\end{align*}
Adding three resultant equations gives   
\beq\label{ptEq}
\norm{ \bar p_t }^2 + ( K(|\s| )\s   , \s_t ) =(f-\Psi_t , \bar p_t)- (\nabla \Psi_t, \uu ).
\eeq 
Using \eqref{W2} with $\zz =\nabla \Psi_t\in \tilde W$ we have 
$$
(\uu, \nabla \Psi_t ) = -( K(|\s| )\s   ,\nabla \Psi_t )\le \norm{K^{\frac 12}(|\s|)\s} \norm{\nabla \Psi_t }.
$$
Note that definition of function $H(\cdot)$ in \eqref{Hdef} gives      
$$ K(|\s| )\s  \cdot \s_t =\frac12\frac d{dt} H(\s).$$
Thus, \eqref{ptEq} yields 
\beq\label{Bbarp}
\begin{aligned}
\norm{ \bar p_t }^2 + \frac12 \frac d {dt}\int_\Omega H(x,t)dx \le C\norm{K^{\frac 12}(|\s|)\s} \norm{\nabla \Psi_t }+\norm{f-\Psi_t}\norm{ \bar p_t}&\\
  \le \varep \norm{K^{\frac 12}(|\s|)\s}^2  + C_\varep\norm{\nabla \Psi_t}^2+\frac 1 2\left(\norm{f-\Psi_t}^2+\norm{ \bar p_t}^2\right)& 
\end{aligned}
\eeq
for all $\varep>0$, where $H(x,t)=H(\s(x,t)).$ 

Adding \eqref{dervp} and \eqref{Bbarp} and selecting sufficiently small $\varep$ implies 
\beqs
\norm{ \bar p_t }^2 + \frac d {dt}\int_\Omega H(x,t)dx+(\bar p, \bar p_t) +  C \norm{K^{\frac 12}(|\s|)\s}^2\le  C B(t).
\eeqs 
Then by Cauchy's inequality:  
\beqs
\frac 12\norm{ \bar p_t }^2 + \frac d {dt}\int_\Omega H(x,t)dx \le - C \norm{K^{\frac 12}(|\s|)\s}^2+ \frac 12\norm{\bar p}^2  +C B(t).
\eeqs 
This and \eqref{i:ineq4} show that 
\beqs
\frac 12\norm{ \bar p_t }^2 + \frac d {dt}\int_\Omega H(x,t)dx \le - C\int_\Omega H(x,t)dx + \frac 12\norm{\bar p}^2  +C B(t).
\eeqs
Ignoring the first term and using Gronwall's inequality we obtain   
\beq\label{Avept}
\begin{split}
\int_\Omega H(x,t)dx &\le -e^{- C_1t}\int_\Omega H(x,0)dx\\
&\quad +C \int_0^t e^{-C_1(t-\tau)}\big(\norm{\bar p}^2  +B\big)d\tau.
\end{split}
\eeq
Using \eqref{i:ineq2}, \eqref{i:ineq4} and \eqref{Bph}, we have from \eqref{Avept} that  
\beq\label{ssh}
\begin{split}
 \norm{\s}_{L^\beta(\Omega)}^{\beta}  &\le -e^{- C_1 t}\norm{\s^0}_{L^{\beta}(\Omega) }^{\beta} + C+ C\int_0^t e^{-C_1(t-\tau)}\left(\norm{\bar p^0}^2 + \Lambda +B\right)d\tau\\
&\le C\left( \norm{\bar p^0}^2+ 1\right)+C\int_0^t e^{-C_1(t-\tau)}\left(\Lambda +B\right)d\tau .
\end{split}
\eeq
In addition, due to \eqref{uuh} and $K(\xi)\xi^2\le C\xi^{2-a}=C\xi^\beta,$
\beq\label{uh}
 \norm{\uu}^2\le \norm{\s}^{\beta}_{L^{\beta}(\Omega)}.
 \eeq 
Combining \eqref{uh} and \eqref{ssh},  we obtain \eqref{suh}. The proof is complete. 
\end{proof}
%----------------------------------------

Although solution is considered continuous at $t=0$ in appropriate Lebesgue or Sobolev space. Its time derivative is not. In the following we prove the time derivative of pressure is bounded. 
\begin{theorem}\label{phderv} There is a positive constant $C$ such that for any $0<t_0\le t\le T,$
\beq\label{Bpht}
\begin{split}
\norm{ \bar p_t(t)}^2&\le C\Big\{t_0^{-1}\left(\norm{\bar p^0}^2  +\int_0^{t_0}(\Lambda +B)(\tau)d\tau\right)\\
 &\quad+ \int_0^t (\norm {(f_t-\Psi_{tt})(\tau)}^2 + \norm{\nabla \Psi_t(\tau)}^2)d\tau \Big\},
\end{split}
\eeq
where $B(t)$ is given as in \eqref{g3}.
%In particular,  if solution $\bar p_h$ has time derivative continuous at $0$ then for all $t\in (0,T)$
%\beq\label{Smoothpht}
%\norm{ \bar p_{h,t}(t)}^2\le C \Big\{\norm{\nabla\cdot \uu_h^0}^2+\norm{f^0-\Psi_t^0}^2 + \int_0^t (\norm {f_t(\tau)-\Psi_{tt}(\tau)}^2 + \norm{\nabla \Psi_t(\tau)}^2)d\tau\Big\}.
%\eeq
\end{theorem}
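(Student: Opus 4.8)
The plan is to differentiate the weak system in time, run an energy estimate on $\bar p_t$, and then remove the dependence on the uncontrolled value $\bar p_t(0)$ by a uniform-Gronwall (averaging-in-time) device. First I would differentiate \eqref{W1}, \eqref{W2}, \eqref{W3} with respect to $t$ and test them respectively with $w=\bar p_t$, $\zz=\s_t$, and $\vv=\uu_t$. Writing $\Phi(\s)=K(|\s|)\s$, the differentiated \eqref{W1} gives $\frac12\frac{d}{dt}\norm{\bar p_t}^2+(\nabla\cdot\uu_t,\bar p_t)=(f_t-\Psi_{tt},\bar p_t)$, the differentiated \eqref{W3} gives $(\bar p_t,\nabla\cdot\uu_t)=(\nabla\Psi_t,\uu_t)-(\s_t,\uu_t)$, and the differentiated \eqref{W2} gives $(\s_t,\uu_t)=-(\frac{d}{dt}\Phi(\s),\s_t)$. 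Substituting the last two into the first, the divergence pairings cancel and I obtain the core identity
\beq
\frac12\frac{d}{dt}\norm{\bar p_t}^2+\left(\frac{d}{dt}\Phi(\s),\s_t\right)=(f_t-\Psi_{tt},\bar p_t)-(\nabla\Psi_t,\uu_t).
\eeq

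Next I would control the two structural terms. The dissipation term is nonnegative: the differential form of the monotonicity \eqref{Qineq} (equivalently \eqref{i:ineq3} applied to the Jacobian of $\Phi$) gives $(\frac{d}{dt}\Phi(\s),\s_t)\ge(1-a)\int_\Omega K(|\s|)|\s_t|^2\,dx\ge0$. For the forcing term $-(\nabla\Psi_t,\uu_t)$ I must absorb $\uu_t$ into this \emph{weighted} dissipation; the key pointwise comparison, again from \eqref{i:ineq3} and $K(\cdot)\le a_0^{-1}$, is $|\uu_t|^2=|\frac{d}{dt}\Phi(\s)|^2\le a_0^{-1}K(|\s|)|\s_t|^2$, whence $\norm{\uu_t}^2\le C(\frac{d}{dt}\Phi(\s),\s_t)$. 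Applying Cauchy–Schwarz and Young's inequality to the two products and choosing the small parameter so that the $\uu_t$ contribution is absorbed by the dissipation, I reach the differential inequality
\beq
\frac{d}{dt}\norm{\bar p_t}^2\le\norm{\bar p_t}^2+\norm{(f_t-\Psi_{tt})(t)}^2+C\norm{\nabla\Psi_t(t)}^2,
\eeq
and I denote the last two terms by $G(t)$.

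The remaining difficulty is that this inequality only transports a bound forward from a positive starting time, whereas $\bar p_t(0)$ is not controlled. I would handle this by a uniform Gronwall argument: for every $s\in(0,t_0]$ and every $t\ge t_0$, Gronwall on $[s,t]$ gives $\norm{\bar p_t(t)}^2\le e^{T}\norm{\bar p_t(s)}^2+e^{T}\int_0^tG(\tau)\,d\tau$; integrating in $s$ over $[0,t_0]$ (the left side being independent of $s$) and dividing by $t_0$ yields $\norm{\bar p_t(t)}^2\le\frac{C}{t_0}\int_0^{t_0}\norm{\bar p_t(s)}^2\,ds+C\int_0^tG(\tau)\,d\tau$. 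Finally I would bound $\int_0^{t_0}\norm{\bar p_t}^2\,ds$ by integrating over $[0,t_0]$ the energy inequality for $\bar p_t$ already established inside the proof of Theorem \ref{ph}(ii) and discarding the nonnegative terms $\int_\Omega H(x,t_0)\,dx$ and $C\int_0^{t_0}\norm{K^{\frac12}(|\s|)\s}^2\,d\tau$; invoking \eqref{Bph} and \eqref{i:ineq5} then dominates it by $C(\norm{\bar p^0}^2+\int_0^{t_0}(\Lambda+B)\,d\tau)$, which after substitution gives exactly \eqref{Bpht}.

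I would flag two steps as the crux. The genuinely nonlinear point is the weighted bound $\norm{\uu_t}^2\le C(\frac{d}{dt}\Phi(\s),\s_t)$: the naive Lipschitz estimate of Proposition \ref{Lips} only gives $\norm{\uu_t}\le C\norm{\s_t}$ in the \emph{unweighted} $L^2$ norm, which is useless here because $\s_t$ is controlled solely through the degenerate weight $K(|\s|)$; producing the factor $K(|\s|)$ on the right is what makes absorption into the dissipation possible. The second, more structural, obstacle is the non-integrable singularity of $\bar p_t$ at $t=0$, which the averaging-in-$s$ step is designed to circumvent and which is precisely the origin of the $t_0^{-1}$ weight in \eqref{Bpht}.
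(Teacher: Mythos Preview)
Your argument is correct and essentially identical to the paper's: the paper differentiates \eqref{W1}--\eqref{W3}, tests with $(\bar p_t,\s_t,\uu_t)$, uses \eqref{i:ineq3} to control the $K'$-term and to bound $\norm{\uu_t}$ by the weighted norm $\norm{K^{1/2}(|\s|)\s_t}$, derives the same differential inequality $\frac{d}{dt}\norm{\bar p_t}^2\le\norm{\bar p_t}^2+G(t)$, and then performs exactly your averaging-in-$t'$ Gronwall step together with the $\int_0^{t_0}\norm{\bar p_t}^2$ bound from the proof of Theorem~\ref{ph}(ii). The only cosmetic differences are that the paper splits $(\tfrac{d}{dt}\Phi(\s),\s_t)$ into $\norm{K^{1/2}(|\s|)\s_t}^2$ plus a remainder rather than keeping it together, and it obtains the bound on $\norm{\uu_t}$ by testing \eqref{ds2} with $\zz=\uu_t$ rather than your pointwise estimate (whose constant should be $(1+a)^2a_0^{-1}$ rather than $a_0^{-1}$, a harmless slip).
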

\begin{proof}
We differentiate \eqref{weakform} with respect $t$, 
\begin{subequations}
\begin{align}
&\label{ds1}    ( \bar p_{tt}, w) +  \left(\nabla\cdot \uu_{t}, w\right) =(f_t-\Psi_{tt}, w), &&\forall w\in W, \\
&\label{ds2}     (\uu_{t}, \zz)  +  ( K(|\s|)\s_{t}, \zz) +\left(K'(|\s|) \frac{\s\cdot \s_{t}}{|\s|}\s,\zz \right)=0, &&\forall \zz\in \tilde W, \\
 &\label{ds3}     (\s_{t},\vv)  + ( \bar p_{t} , \nabla\cdot \vv )= (\nabla \Psi_t, \vv ), &&\forall \vv\in V.
\end{align}
\end{subequations}
Selecting $w=\bar p_{t}$, $\zz=\s_{t}$ and $\vv =\uu_{t}$ and summing resultant equations we obtain 
\beq\label{I}
\begin{split}
\frac12\frac d{dt}  \norm{ \bar p_{t}}^2+ \norm{K^{\frac 12}(|\s|)\s_{t}}^2 &= - \left(K'(|\s|) \frac{\s\cdot \s_{t}}{|\s|}\s,\s_{t} \right)+(f_t-\Psi_{tt},\bar p_{t})\\
 &\quad-(\nabla \Psi_t, \uu_{t} )=I_1+I_2+I_3.
\end{split}
\eeq
According to \eqref{i:ineq3},   
\beqs
\left| K'(|\s|) \frac{\s\cdot \s_{t}}{|\s|}\s\right| \le a K(|\s|)|\s_{t}|,
\eeqs
which leads to
\beq\label{I1}
|I_1|\le  a\norm{K^{\frac 12}(|\s|) \s_{t}}^2.
\eeq
By Cauchy's inequality, 
\beq\label{I2}
|I_2| \le \frac12 \left(\norm {f_t-\Psi_{tt}}^2+ \norm{\bar p_{t}}^2 \right).
\eeq
For all $\varep>0$,
\beqs
|I_3|\le C\varep^{-1}\norm{\nabla \Psi_t}^2+\varep\norm{\uu_{t}}^2. 
\eeqs
In \eqref{ds2}, taking $\zz=\uu_{t}$ we find that  
\beq\label{B4u}
\norm{\uu_{t}}\le   \norm{ K^{\frac 12}(|\s|)\s_{t}} +\norm{K'(|\s|) \frac{\s\cdot \s_{t}}{|\s|}\s}\le (1+a)\norm{ K^{\frac 12}(|\s|)\s_{t}}.
\eeq
By choosing $\varep=\frac{1-a}{2(1+a)}>0$, 
\beq\label{I3}
|I_3|\le C\norm{\nabla \Psi_t}^2+\frac{1-a}2\norm{ K^{\frac 12}(|\s_h|)\s_{t}}^2. 
\eeq
 It follows from \eqref{I}, \eqref{I1}, \eqref{I2} and \eqref{I3} that
\beqs
\frac d{dt}  \norm{ p_{t}}^2+ (1-a) \norm{K^{\frac 12}(|\s|)\s_{t}}^2 \le \norm {f_t-\Psi_{tt}}^2+ \norm{\bar p_{t}}^2 + C\norm{\nabla \Psi_t}^2.
\eeqs
Dropping the nonnegative term of the left hand side gives  
 \beq\label{DEpht}
\frac d{dt}  \norm{ \bar p_{t}}^2 \le   \norm{\bar p_{t}}^2+C(\norm {f_t-\Psi_{tt}}^2 + \norm{\nabla \Psi_t}^2).
\eeq
For $t\ge t'>0$, applying Gronwall's inequality to \eqref{DEpht} we find  that
\beq\label{phtB}
\norm{ \bar p_{t}}^2\le C\left(\norm{ \bar p_{t}(t')}^2 + \int_{t'}^t (\norm {f_t-\Psi_{tt}}^2 + \norm{\nabla \Psi_t}^2)d\tau\right).
\eeq
Integrating \eqref{phtB} in $t'$ from $0$ to $t_0$, using \eqref{Avept} we obtain 
\begin{multline*}
t_0\norm{ \bar p_{t}}^2\le C\left(\int_0^{t_0} \norm{ \bar p_{t}(t')}^2 + \int_0^{t_0}\int_{t'}^t (\norm {\Psi_{tt}-f_t}^2 + \norm{\nabla \Psi_t}^2)d\tau\right)\\
\le C\left(\int_0^{t_0} e^{- C_1(t_0-\tau)}\left(\norm{\bar p}^2  +B\right)d\tau + t_0 \int_0^t (\norm {f_t-\Psi_{tt}}^2 + \norm{\nabla \Psi_t}^2)d\tau \right).
\end{multline*}
Now using \eqref{Bph} we deduce previous inequality to   
\beqs
t_0\norm{ \bar p_{t}}^2\le C\left(\norm{\bar p^0}^2 + \int_0^{t_0}(\Lambda+B)d\tau + t_0 \int_0^t (\norm {f_t-\Psi_{tt}}^2 + \norm{\nabla \Psi_t}^2)d\tau \right)
\eeqs
which proves \eqref{Bpht}. 
The proof is complete.  
\end{proof}
%---------------------------------------------------------------

We also obtain the similar results for solution of \eqref{semidiscreteform} as following.    
\begin{theorem}\label{pspt} Let $(p_h, \uu_h,\s_h)$ solve the semidiscrete problem \eqref{semidiscreteform}. 

{\rm (i)}   There is a positive constant $C$ such that for each $t\in (0,T),$
\beq\label{Bp}
\norm{\bar p_h(t)}^2 + \int_0^t \norm{ K^{\frac 12}(|\s_h(\tau)| )\s_h(\tau)}^2d\tau \le \norm{\bar p^0}^2 +C \int_0^t A(\tau)d\tau. 
\eeq   

{\rm (ii)}   There are two positive constants $C,  C_1$ such that for each $t\in (0,T)$   
\beq\label{su}
\begin{split}
\norm{\uu_h(t)}^2 +\norm{\s_h(t)}_{L^{\beta}(\Omega)}^{\beta}\le C\left(\norm{\bar p^0}^2+1+ \int_0^t e^{-C_1(t-\tau)}(\Lambda +B)(\tau)d\tau\right).
\end{split}
\eeq

{\rm (iii)} 
For $0<t_0\le t\le T$,  
\beq\label{Bpt}
\begin{aligned}
\norm{ \bar p_{h,t}}^2&\le C\left\{t_0^{-1}\left(\norm{\bar p^0}^2  +\int_0^{t_0}(\Lambda+B)(\tau)d\tau\right)\right.\\
 &\quad\quad \left.+ \int_0^t (\norm {(f_t-\Psi_{tt})(\tau)}^2 + \norm{\nabla \Psi_t(\tau)}^2)d\tau \right\},
\end{aligned}
\eeq
where $C$ is a positive constant.   
\end{theorem}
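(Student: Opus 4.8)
The three assertions are the discrete counterparts of Theorems \ref{ph}(i), \ref{ph}(ii) and \ref{phderv}, and the plan is to repeat those energy arguments with the discrete unknowns, exploiting that $W_h,\tilde W_h,V_h$ are time independent so that $\bar p_{h,t}\in W_h$, $\s_{h,t}\in\tilde W_h$ and $\uu_{h,t}\in V_h$ remain admissible test functions in \eqref{semidiscreteform}. For (i) I would test \eqref{Semi1}, \eqref{Semi2}, \eqref{Semi3} with $w_h=\bar p_h$, $\zz_h=\s_h$ and $\vv_h=\uu_h$ and add the three identities to obtain, exactly as in \eqref{Ineq}, the discrete energy relation
\beqs
\frac12\frac{d}{dt}\norm{\bar p_h}^2+\norm{K^{\frac12}(|\s_h|)\s_h}^2=(f-\Psi_t,\bar p_h)-(\nabla\Psi,\uu_h).
\eeqs
Testing \eqref{Semi2} with $\zz_h=\uu_h$ reproduces the discrete version of \eqref{uuh}, namely $\norm{\uu_h}\le\norm{K^{\frac12}(|\s_h|)\s_h}$, which disposes of the last term.

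The crux of (i), and the step I expect to be the main obstacle, is bounding $(f-\Psi_t,\bar p_h)$ so that it can be absorbed into the dissipation $\norm{K^{\frac12}(|\s_h|)\s_h}^2$ while producing only the data term $A(t)$. In the continuous proof this was achieved by the weighted Poincar\'e--Sobolev inequality \eqref{PSineq} together with the weighted gradient bound \eqref{weightnorm}; both rely on $\bar p$ being an $H^1$ function that vanishes on $\Gamma$, and \eqref{weightnorm} moreover uses the test function $K(|\s|)\nabla\bar p\in V$. Neither is available here: $\bar p_h$ is only a discontinuous piecewise polynomial, it does not vanish on $\Gamma$, and $K(|\s_h|)\nabla\bar p_h\notin V_h$. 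The weighted gradient bound itself is harmless, since with $\s_h-\nabla\Psi$ playing the role of $\nabla\bar p$ (the discrete gradient supplied by \eqref{Semi3}) the triangle inequality and $K\le a_0^{-1}$ give $\norm{K^{\frac12}(|\s_h|)(\s_h-\nabla\Psi)}\le\norm{K^{\frac12}(|\s_h|)\s_h}+C\norm{\nabla\Psi}$. The genuinely delicate point is a \emph{discrete} weighted Poincar\'e--Sobolev inequality controlling $\norm{\bar p_h}_{L^{\beta^*}}$ by this weighted discrete gradient, with the inter-element jump terms intrinsic to the discontinuous spaces $W_h,\tilde W_h$ kept under control; proving it, or reducing it to a known such inequality, is the main difficulty of the whole theorem. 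Once $(f-\Psi_t,\bar p_h)\le\frac12\norm{K^{\frac12}(|\s_h|)\s_h}^2+CA(t)$ is in hand, the discrete analogue of \eqref{dervp} follows and integration in $t$ yields \eqref{Bp}.

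Part (ii) I would obtain by mimicking Theorem \ref{ph}(ii): test \eqref{Semi1} with $\bar p_{h,t}$ and \eqref{Semi2} with $\s_{h,t}$, differentiate \eqref{Semi3} in $t$ and test with $\uu_h$, then add. The pointwise identity $K(|\s_h|)\s_h\cdot\s_{h,t}=\frac12\frac{d}{dt}H(\s_h)$ from \eqref{Hdef} converts the second term into $\frac12\frac{d}{dt}\int_\Omega H(\s_h)\,dx$, and the term $(\nabla\Psi_t,\uu_h)$ is handled \emph{without} projecting $\nabla\Psi_t$ into $\tilde W_h$: Cauchy--Schwarz together with the discrete version of \eqref{uuh} gives $(\nabla\Psi_t,\uu_h)\le\norm{K^{\frac12}(|\s_h|)\s_h}\,\norm{\nabla\Psi_t}$, reproducing \eqref{Bbarp}. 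Adding the discrete analogue of \eqref{dervp} from (i), invoking \eqref{i:ineq4} and Gronwall's inequality yields the discrete form of \eqref{Avept}; finally \eqref{i:ineq2} turns $\int_\Omega H(\s_h)\,dx$ into $\norm{\s_h}_{L^\beta(\Omega)}^\beta$, and \eqref{uuh} controls $\norm{\uu_h}$, giving \eqref{su}.

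Part (iii) is the most routine, because every test function involved is itself discrete, so no Poincar\'e--Sobolev estimate and no projection of a non-discrete function enters. I would differentiate \eqref{semidiscreteform} in $t$, test with $w_h=\bar p_{h,t}$, $\zz_h=\s_{h,t}$, $\vv_h=\uu_{h,t}$ and add, exactly as in Theorem \ref{phderv}. The extra term carrying $K'$ is absorbed by \eqref{i:ineq3}, the discrete analogue of \eqref{B4u} bounds $\norm{\uu_{h,t}}$ by $\norm{K^{\frac12}(|\s_h|)\s_{h,t}}$, and a first Gronwall step gives a pointwise-in-$t$ estimate; integrating this in the auxiliary variable $t'$ over $(0,t_0)$ and inserting the discrete \eqref{Avept} and \eqref{Bp} produces the $t_0^{-1}$ factor and establishes \eqref{Bpt}. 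Thus (iii) transfers essentially verbatim, and the only genuinely new ingredient is the discrete weighted Poincar\'e--Sobolev estimate required in (i).
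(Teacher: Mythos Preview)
The paper does not actually prove Theorem~\ref{pspt}: the sentence immediately preceding it reads ``We also obtain the similar results for solution of \eqref{semidiscreteform} as following,'' and no argument is given. The implied proof is therefore a verbatim repetition of the energy computations in Theorems~\ref{ph} and~\ref{phderv} with $(\bar p_h,\uu_h,\s_h)$ substituted for $(\bar p,\uu,\s)$, which is exactly the route you propose.

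You in fact go further than the paper by isolating the one step that does \emph{not} transfer mechanically: the bound on $(f-\Psi_t,\bar p_h)$ in part~(i) via the weighted Poincar\'e--Sobolev inequality \eqref{PSineq}. That inequality is stated for functions vanishing on $\partial\Omega$ with an $L^2$ gradient, and $\bar p_h\in W_h$ is a discontinuous piecewise polynomial satisfying neither hypothesis; likewise the auxiliary test function $K(|\s|)\nabla\bar p$ used to derive \eqref{weightnorm} has no obvious discrete analogue in $V_h$. The paper is silent on this point, so your identification of it as the genuine obstruction is accurate rather than a misreading. Your treatment of parts~(ii) and~(iii) --- where all test functions are already discrete and no Sobolev embedding is needed --- is correct and matches what the paper implicitly intends.
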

%==========================

To finish this section we give a bound of pressure in $L^\infty$-norm which is useful for our error estimate in the later sections. 
\begin{theorem}\label{pinf} Let $(p, \uu,\s)$ solve the problem \eqref{weakform}, $
\mu \ge 2 \text{ and } \mu>\frac{ad}{\beta}.
$
If $T>0$ then 
\beq\label{barpone}
\begin{split}
&\sup_{t\in[0,T]}\norm{\bar p(t)}_{L^\infty(\Omega)}\le 2\|\bar p^0\|_{L^\infty(\Omega)}\\
&+ C\left\{ (1+T)^{\mu} \Big(1+ \sup_{t\in[0,T]} \|(f-\Psi_t)(t)\|_{L^{\mu+1}(\Omega)}^\mu+\sup_{t\in[0,T]}\|\nabla \Psi(t)\|_{L^\infty(\Omega)}^{\frac{\beta\mu}{2}}\Big)\right\}^\frac1{\mu-a}.
\end{split}
\eeq
\end{theorem}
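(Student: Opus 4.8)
The plan is to reduce problem \eqref{weakform} to the single degenerate parabolic equation
\[
\bar p_t - \nabla\cdot(K(|\s|)\s) = f - \Psi_t, \qquad \s = \nabla\bar p + \nabla\Psi,
\]
which is legitimate under the standing regularity assumptions, and then to run an $L^p$ energy iteration (a Moser/Alikakos-type scheme) letting $p\to\infty$. First I would test this equation against $w = |\bar p|^{p-2}\bar p$ for $p\ge\mu$ (admissible since $\bar p$ vanishes on $\Gamma$, so integration by parts produces no boundary contribution), giving
\[
\frac1p\frac{d}{dt}\norm{\bar p}_{L^p(\Omega)}^p + (p-1)\int_\Omega K(|\s|)\,\s\cdot\nabla\bar p\,|\bar p|^{p-2}\,dx = \int_\Omega (f-\Psi_t)\,|\bar p|^{p-2}\bar p\,dx.
\]

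Second, I would extract coercivity from the second term. Writing $\s\cdot\nabla\bar p = |\s|^2 - \s\cdot\nabla\Psi$ and using $K(|\s|)|\s|^2 \ge c_3(|\s|^\beta - 1)$ from \eqref{i:ineq2} (with $n=2$, $\beta=2-a$), together with Young's inequality on the cross term $K(|\s|)|\s||\nabla\Psi|$ and the bound $K\le a_0^{-1}$ from property (i), the left side dominates a multiple of $\int_\Omega |\s|^\beta|\bar p|^{p-2}\,dx$ modulo a term $\int_\Omega |\nabla\Psi|^2|\bar p|^{p-2}\,dx$ and the lower-order constant. Since $\s = \nabla\bar p + \nabla\Psi$, this controls $\int_\Omega|\nabla\bar p|^\beta|\bar p|^{p-2}\,dx = C(p)\int_\Omega|\nabla(|\bar p|^{(p-2+\beta)/\beta})|^\beta\,dx$, to which the Sobolev embedding $W^{1,\beta}\hookrightarrow L^{\beta^*}$ (equivalently the weighted inequality \eqref{PSineq}) applies. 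On the right, H\"older's inequality splits the source: $\norm{\nabla\Psi}_{L^\infty}$ pulls out of the $|\nabla\Psi|^2$ term, and choosing the H\"older exponents so that $f-\Psi_t$ is measured in $L^{\mu+1}(\Omega)$ produces exactly the factors $\sup_t\norm{(f-\Psi_t)}_{L^{\mu+1}}^\mu$ and $\sup_t\norm{\nabla\Psi}_{L^\infty}^{\beta\mu/2}$ appearing in \eqref{barpone}.

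Third, I would assemble these into a recursive differential inequality for $y_k(t) = \norm{\bar p(t)}_{L^{p_k}(\Omega)}^{p_k}$ along a geometric sequence $p_k = (\beta^*/\beta)^k\mu\to\infty$; the hypotheses $\mu\ge 2$ and $\mu>ad/\beta$ guarantee the starting exponent is large enough for $\beta^*=d\beta/(d-\beta)$ to be usable and for the source integrals to converge. Integrating in time and applying Gronwall in the form that yields a $\sup_t$ bound, then iterating over $k$ while tracking the accumulated constants, produces a geometric product that collapses into the single factor $(1+T)^\mu$; the degeneracy exponent $\beta<2$ is precisely what converts the natural power into the exponent $1/(\mu-a)$. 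The additive term $2\norm{\bar p^0}_{L^\infty(\Omega)}$ is obtained by initializing the iteration with the truncation of $\bar p$ at level $\norm{\bar p^0}_{L^\infty(\Omega)}$, so that the data term vanishes at $t=0$, and then restoring the level at the end.

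The main obstacle is the degeneracy itself. Because the flux $K(|\s|)\s$ is only $\beta$-coercive with $\beta = 2-a<2$, and the energy carries the weight $K(|\s|)$ rather than a uniform ellipticity constant, the Sobolev step must be routed either through \eqref{PSineq} or through a careful passage from the weighted integral to $\int_\Omega|\nabla\bar p|^\beta|\bar p|^{p-2}\,dx$, all while keeping the $p$-dependence of every constant explicit so that the iterated product converges and delivers the clean exponent $1/(\mu-a)$ and power $(1+T)^\mu$. Controlling the interplay between the time-derivative term and the degenerate coercive term so that the final estimate is genuinely uniform in $t\in[0,T]$, rather than merely time-integrated, is the most delicate part of the bookkeeping.
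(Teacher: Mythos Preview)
Your proposal is a genuine alternative to the paper's proof. The paper uses De Giorgi's level-set iteration rather than a Moser/Alikakos exponent iteration: it \emph{fixes} the exponent at $\mu$, tests against $|\bar p^{(k)}|^{\mu-1}$ where $\bar p^{(k)} = (\bar p - k)_+$ for $k \ge \|\bar p^0\|_{L^\infty}$, and then iterates over the \emph{levels} $k_i = M_0(2 - 2^{-i})$. A parabolic embedding lemma (Lemma~\ref{Sob4}) converts the resulting energy $F_k = \sup_t\|\bar p^{(k)}\|_{L^\mu}^\mu + \iint |\bar p^{(k)}|^{\mu-2}|\nabla\bar p^{(k)}|^\beta$ into an $L^q(Q_T)$ bound with $q = \mu(1+\beta/d) - a$, and a geometric-decay lemma (Lemma~\ref{multiseq}) forces the superlevel measures $\sigma_{k_i}$ to zero, yielding $\bar p \le 2M_0$; the exponent $1/(\mu-a)$ and the factor $2$ in front of $\|\bar p^0\|_{L^\infty}$ both fall out of this level iteration directly.

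Your scheme inverts the roles: fix the truncation level once (at $\|\bar p^0\|_{L^\infty}$) and iterate over exponents $p_k\to\infty$. Both routes are classical and both should succeed here. The paper's approach delivers the stated exponent $1/(\mu-a)$ more transparently, since it already sits inside the parabolic embedding; in your scheme it has to emerge as the limit of an iterated product of constants, which is precisely the delicate bookkeeping you flag. Your approach would more naturally give coefficient $1$ rather than $2$ on the initial data, which is harmless. One point to watch: the Sobolev step must be the full \emph{space--time} embedding mixing the $\sup_t L^\mu$ norm with the weighted gradient integral, not merely the spatial $W^{1,\beta}\hookrightarrow L^{\beta^*}$; that is exactly what Lemma~\ref{Sob4} packages and what the hypothesis $\mu>ad/\beta$ is really needed for.
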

The proof of Theorem~\ref{pinf} is given in Appendix.  
%--------------------------------------------------

\section{Error analysis}\label{Err-anl} 
    
In this section, we use estimates in the previous section, the techniques in \cite{HI1} and the expanded mixed finite element method to establish the error estimates between the analytical solution and approximation solution in several norms. 

%============================
%In the below development we discuss error estimates for the case $K(\cdot)$ is unbounded below. In doing so we use the boundedness of gradient of pressures in $L^\beta(\Omega)(\Omega)$ which are given in Theorems~\ref{ph} and \ref{pspt}. For this purpose, we assume the weak solutions, initial data and boundary data enough regularities so that our calculation can be applied.         

%============================
\subsection{Error estimate for semidiscrete method} 
We will bound the error in the semidiscrete method in various norms by
comparing the computed solution to the projections of the true
solutions.  To do this, we restrict the test functions
in~\eqref{weakform} to the finite-dimensional spaces.   
Let 
\begin{align*}
 \bar p_h -\bar p =(\bar p_h-\pi \bar p ) + (\pi \bar p - \bar p) \equiv  \vartheta + \theta,\\
  \s_h -\s = (\s_h-\pi \s ) + (\pi \s - \s) \equiv  \eta+ \zeta,\\
  \uu_h -\uu = (\uu_h-\Pi \uu ) + (\Pi \uu - \uu) \equiv  \rho+ \varrho.
\end{align*}
Properties of projections in \eqref{prjpi} and \eqref{prjPi} yield for each $t\in[0,T],$
\begin{align}
\label{Btheta}
\norm{\theta }_{L^\alpha} &\le Ch^m \norm{\bar p }_{m,\alpha},\quad  \forall \bar p\in W^{m,\alpha}(\Omega),\\
\label{Bzeta}
\norm{\zeta }_{L^\alpha} &\le Ch^m \norm{\s}_{m,\alpha},   \quad\forall \s\in  (W^{m,\alpha}(\Omega))^d,\\
\label{Bvarrho}
\norm{\varrho }_{L^\alpha}&\le Ch^m \norm{\uu }_{m,\alpha},\quad \forall \uu\in (W^{m,\alpha}(\Omega))^d. 
\end{align}  
for all $1\le m\le r+1$, $1\le \alpha\le \infty.$ Let $t_0>0$, 
\begin{align*}
\Upsilon &= 1+\norm{\bar p^0}^2 + \sup_{t\in [0,T]}\int_0^t e^{-C_1(t-\tau)}(\Lambda+B)(\tau)d\tau, \\
\Xi &=t_0^{-1}\left(\norm{\bar p^0}^2  +\int_0^{t_0}(\Lambda+B)(\tau)d\tau\right) + \int_0^T (\norm {(f_t-\Psi_{tt})(\tau)}^2 + \norm{\nabla \Psi_t(\tau)}^2)d\tau.
\end{align*}
where $\Lambda(t)$ and $B(t)$ are defined in \eqref{lda}, \eqref{g3}.

\begin{theorem}\label{mainres} Assume $(\bar p^0,\uu^0,\s^0 )\in W\times V\times \tilde W$ and $(\bar p_h^0,\uu^0_h,\s_h^0 )\in W_h\times V_h\times \tilde W_h$. Let $(p, \uu,\s)$ solve problem \eqref{weakform} and $(p_h, \uu_h,\s_h)$ solve the semidiscrete mixed finite element approximation \eqref{semidiscreteform}. There is a positive constant $C$ such that for any $t\in (0,T)$,
\beq\label{mi2a}
\norm {(p_h - p)(t)}^2 \le C\left( \norm {\theta(t)}^2+ \norm{ (\pi \Psi -\Psi)(t) }^2\right)+ C  \Upsilon \int_0^t   \norm{\zeta(\tau)}_{L^\beta(\Omega)}d\tau.
\eeq
Consequently, if $p, \Psi\in L^\infty(0,T; H^{r+1}(\Omega)) $, $\s \in L^2(0,T; (W^{r+1,\beta}(\Omega))^d)$ then for any $t\in (0,T)$, 
\beq \label{mi4a}
\begin{aligned}
\norm {(p_h - p)(t)  }&\le Ch^{r+1} \left( \norm {\bar p(t)}_{r+1}+\norm{\Psi(t)}_{r+1}\right)\\
 &\quad+  C\Upsilon^{\frac 12}h^{\frac{r+1}2}\sqrt{\int_0^t \norm{\s(\tau)}_{r+1,\beta} d\tau}. 
\end{aligned}
\eeq
\end{theorem}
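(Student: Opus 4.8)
The plan is to compare $(p_h,\uu_h,\s_h)$ with the projections $(\pi\bar p,\Pi\uu,\pi\s)$ of the exact triple, test the error equations against the errors themselves, and exploit the monotonicity of $y\mapsto K(|y|)y$ together with the uniform $L^\beta$-bounds already established in Theorems~\ref{ph} and~\ref{pspt}. I first subtract \eqref{semidiscreteform} from \eqref{weakform}, restricting the latter to $W_h\times V_h\times\tilde W_h$, and insert the splittings $\bar p_h-\bar p=\vartheta+\theta$, $\s_h-\s=\eta+\zeta$, $\uu_h-\uu=\rho+\varrho$. The projection identities for $\pi$ and $\Pi$ annihilate the inert pieces $(\theta_t,w_h)=0$, $(\theta,\nabla\cdot\vv_h)=0$, $(\nabla\cdot\varrho,w_h)=0$, while the orthogonalities $(\varrho,\zz_h)=0$ for $\zz_h\in\tilde W_h$ and $(\zeta,\vv_h)=0$ for $\vv_h\in V_h$ built into the expanded mixed spaces remove the remaining cross terms, so that the error system collapses to
\begin{align*}
(\vartheta_t,w_h)+(\nabla\cdot\rho,w_h)&=0,\\
(\rho,\zz_h)+(K(|\s_h|)\s_h-K(|\s|)\s,\zz_h)&=0,\\
(\eta,\vv_h)+(\vartheta,\nabla\cdot\vv_h)&=0,
\end{align*}
for all $w_h\in W_h$, $\zz_h\in\tilde W_h$, $\vv_h\in V_h$.

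Next I take $w_h=\vartheta$ in the first equation and $\vv_h=\rho$ in the third; adding them cancels the divergence terms and yields the energy relation $\tfrac12\frac{d}{dt}\norm{\vartheta}^2=(\eta,\rho)$. Choosing $\zz_h=\eta$ in the second equation gives $(\eta,\rho)=-(K(|\s_h|)\s_h-K(|\s|)\s,\eta)$, and writing $\eta=(\s_h-\s)-\zeta$ splits this as
$$\tfrac12\frac{d}{dt}\norm{\vartheta}^2+\big(K(|\s_h|)\s_h-K(|\s|)\s,\s_h-\s\big)=\big(K(|\s_h|)\s_h-K(|\s|)\s,\zeta\big).$$
By the monotonicity \eqref{Qineq} (hence \eqref{Mono}) the bracketed term on the left is nonnegative and is simply discarded; since the surviving right-hand side is free of $\vartheta$, no Gronwall step is needed.

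The crux — and the step I expect to be the main obstacle — is to bound the nonlinear residual $\big(K(|\s_h|)\s_h-K(|\s|)\s,\zeta\big)$ by $C\Upsilon\norm{\zeta}_{L^\beta(\Omega)}$. The Lipschitz estimate \eqref{Lipchitz} is the wrong instrument, since it would force the uncontrolled norm $\norm{\s_h-\s}_{L^{\beta'}(\Omega)}$; instead I use the degeneracy bound \eqref{i:ineq2} with $n=1$, i.e. $K(\xi)\xi\le c_2\xi^{1-a}$, which gives $\norm{K(|\s|)\s}_{L^{\beta'}(\Omega)}\le C\norm{\s}_{L^\beta(\Omega)}^{1-a}$ precisely because $(1-a)\beta'=\beta$ for $\beta'=\beta/(\beta-1)$. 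Hölder's inequality with the conjugate pair $(\beta,\beta')$, the triangle inequality, and the bounds $\norm{\s(t)}_{L^\beta(\Omega)}^\beta,\ \norm{\s_h(t)}_{L^\beta(\Omega)}^\beta\le C\Upsilon$ from Theorems~\ref{ph}(ii) and~\ref{pspt}(ii) then produce the factor $\Upsilon$ (using $\Upsilon\ge1$ and $(1-a)/\beta<1$ to absorb the fractional power into $\Upsilon$). Integrating over $[0,t]$ and observing $\vartheta(0)=\pi\bar p^0-\pi\bar p^0=0$ gives $\norm{\vartheta(t)}^2\le C\Upsilon\int_0^t\norm{\zeta(\tau)}_{L^\beta(\Omega)}\,d\tau$; the triangle inequality on $p_h-p=\vartheta+\theta+(\pi\Psi-\Psi)$ then delivers \eqref{mi2a}.

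Finally \eqref{mi4a} follows by taking square roots in \eqref{mi2a} and inserting the approximation properties with $m=r+1$: \eqref{Btheta} and \eqref{prjpi} bound $\norm{\theta}$ and $\norm{\pi\Psi-\Psi}$ by $Ch^{r+1}(\norm{\bar p}_{r+1}+\norm{\Psi}_{r+1})$, while \eqref{Bzeta} bounds $\norm{\zeta}_{L^\beta(\Omega)}$ by $Ch^{r+1}\norm{\s}_{r+1,\beta}$. Since $\zeta$ enters linearly under the time integral before the square root is taken, its contribution carries the half power $h^{(r+1)/2}$ and the factor $\Upsilon^{1/2}$, matching the stated estimate.
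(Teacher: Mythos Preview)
Your argument is essentially the paper's proof: you derive the same reduced error system \eqref{prj1}--\eqref{prj3}, combine them to reach the identity $\tfrac12\frac{d}{dt}\norm{\vartheta}^2+(K(|\s_h|)\s_h-K(|\s|)\s,\s_h-\s)=(K(|\s_h|)\s_h-K(|\s|)\s,\zeta)$, bound the right-hand side via $K(\xi)\xi\le C\xi^{\beta-1}$ and H\"older exactly as in \eqref{mi1a}--\eqref{Bomega}, integrate using $\vartheta(0)=0$, and finish with the triangle inequality and the approximation estimates. The only cosmetic difference is that the paper retains the monotone term $\omega\norm{\s_h-\s}_{L^\beta}^2$ in \eqref{keyb} for later use in Theorem~\ref{suerr}, whereas you discard it; for the present theorem this is immaterial.
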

%=======================
\begin{proof} 
Subtracting \eqref{semidiscreteform} from \eqref{weakform} we obtain the equations of difference:   
\begin{subequations}\label{Erreq}
\begin{align}
&\label{Erreq1} ( \bar p_{h,t} -\bar p_t , w_h) +  \left(\nabla\cdot (\uu_h -\uu) , w_h\right) =0,    &&\forall w_h\in W_h,\\
&\label{Erreq2} (\uu_h - \uu,\zz_h)  + \left( K(|\s_h| )\s_h - K(|\s|) \s, \zz_h \right)=0,  &&\forall \zz_h\in \tilde W_h,\\
&\label{Erreq3}  (\s_h-\s,\vv_h)  + ( \bar p_h - \bar p , \nabla\cdot \vv_h )= 0, &&\forall \vv_h\in V_h.
\end{align}
\end{subequations} 
%where initial condition is defined by $\bar{p}_h(x,0)=\pi \bar{p}_0(x)$  in $\Omega$.

Let $w_h=\vartheta,$ $\zz_h=\eta $ and $\vv_h=\rho$. Using the $L^2$-projection and $H(\rm div)$-projection, we have from \eqref{Erreq1}--\eqref{Erreq3} that  
\begin{subequations}\label{eqprj}
\begin{align}
\label{prj1} &( \vartheta_t, \vartheta) +  \left(\nabla\cdot \rho, \vartheta \right) =0, \\
\label{prj2}&(\rho, \eta)  + \left( K(|\s_h| )\s_h -K(|\s|)\s  ,  \eta\right)=0,\\ 
\label{prj3} &(\eta,\rho)  + ( \vartheta , \nabla\cdot \rho )= 0.
\end{align}
\end{subequations} 
%where $\vartheta(0)=0.$ 
Adding three equations \eqref{prj1}--\eqref{prj3} gives  
\beqs
\frac 12  \frac d{dt}\norm {\vartheta}^2 + \left( K(|\s_h| )\s_h -K(|\s|) \s ,  \eta \right) =0
\eeqs
or 
\beq\label{eqerr}
\frac 12  \frac d{dt}\norm {\vartheta}^2 + \left( K(|\s_h| )\s_h - K(|\s|) \s ,  \s_h - \s \right) =\left( K(|\s_h| )\s_h - K(|\s|) \s , \zeta \right).
\eeq
Applying \eqref{Mono} to the second term of \eqref{eqerr} we have    
\beq\label{Mn1}
\left( K(|\s_h| )\s_h - K(|\s|) \s, \s_h -\s \right)\ge C\omega \norm{\s_h-\s}^2_{L^\beta(\Omega)},
\eeq
where
\beq\label{omegadef}
\omega = \omega(t) = \left(1+ \max\{\norm{\s_h(t)}_{L^\beta(\Omega)} ,  \norm{\s(t)}_{L^\beta(\Omega)} \}  \right)^{-a}.
\eeq
Since $K(|\xi| )\xi \le C\xi^{\beta-1} $, the last term of \eqref{eqerr} is bounded by 
\beq\label{mi1a}
\begin{aligned}
\left| ( K(|\s_h| )\s_h - K(|\s|) \s, \zeta) \right|&\le C\left(|\s_h|^{\beta-1} +|\s|^{\beta-1},|\zeta|\right)\\
&\le C \left(\norm{\s_h}_{L^\beta(\Omega)}^{\beta-1}+\norm{\s}_{L^\beta(\Omega)}^{\beta-1} \right)  \norm{\zeta}_{L^\beta(\Omega)}.
\end{aligned}
\eeq
The last inequality is obtained by applying H\"oder's inequality with powers $\frac {\beta}{\beta-1}$ and $\beta$. 

It follows from \eqref{eqerr}, \eqref{Mn1} and \eqref{mi1a} that
\beq\label{keya}
\begin{split}
\frac d{dt}\norm {\vartheta}^2 +  \omega\norm{\s_h -\s}_{L^\beta(\Omega)}^2 
&\le \left(\norm{\s_h}_{L^\beta(\Omega)}^{\beta-1}+\norm{\s}_{L^\beta(\Omega)}^{\beta-1} \right) \norm{\zeta}_{L^\beta(\Omega)}\\
&\le C\left(1+ \norm{\s_h}_{L^\beta(\Omega)}^{\beta} +  \norm{\s}_{L^\beta(\Omega)}^{\beta}  \right) \norm{\zeta}_{L^\beta(\Omega)}.
\end{split}
\eeq
Due to \eqref{suh} and \eqref{su},   
\beq\label{Bomega}
1+ \norm{\s_h}_{L^\beta(\Omega)}^{\beta} +  \norm{\s}_{L^\beta(\Omega)}^{\beta} \le C\Upsilon.
\eeq
Integrating \eqref{keya} in time, using $\vartheta(0)=0$,  we have
\beq\label{keyb}
\norm {\vartheta}^2 +\int_0^t  \omega\norm{\s_h -\s}^2_{L^\beta(\Omega)} d\tau\le C\Upsilon \int_0^t  \norm{\zeta}_{L^\beta(\Omega)}d\tau.
\eeq 
Since
\beq\label{trans1}
 p_h - p = (\bar p_h - \bar p )+ (\pi \Psi -\Psi ) = \vartheta +\theta + (\pi \Psi -\Psi ),
\eeq
 the inequality \eqref{mi2a} follows from \eqref{trans1}, Minkowski's inequality and \eqref{keyb}. 
 
 Applying \eqref{Bzeta}, \eqref{Bvarrho} to \eqref{mi2a} we obtain \eqref{mi4a}.
 The proof is complete.    
\end{proof}
%----------------------------------------------------------

The $L^2$-error estimate and the inverse estimates enable us to find the $L^\infty$- error estimate as following: 
%-----------------------------------------------------------
\begin{theorem}\label{pinferr}  Assume $(\bar p^0,\uu^0,\s^0 )\in W\times V\times \tilde W$ and $(\bar p_h^0,\uu^0_h,\s_h^0 )\in W_h\times V_h\times \tilde W_h$. Let $(p, \uu,\s)$ solve problem \eqref{weakform} and $(p_h, \uu_h,\s_h)$ solve the semidiscrete mixed finite element approximation \eqref{semidiscreteform}.  If $p, \Psi\in L^\infty\left(0,T; W^{r+1,\infty}(\Omega) \right),$  then there is a positive constant $C$ such that for any $t\in (0,T)$,
\beq\label{pphdiff}
\begin{split}
\norm{(p-p_h)(t)}_{L^\infty(\Omega)} &\le \norm{\theta(t)}_{L^\infty(\Omega)}+ \norm{(\pi\Psi -\Psi)(t)}_{L^\infty(\Omega)}\\
&\quad +C\Upsilon^{\frac 12} h^{-1} \sqrt{\int_0^t  \norm{\zeta(\tau)}_{L^\beta(\Omega)} d\tau}.  
\end{split}
\eeq 
Furthermore if $\s \in L^1\left(0,T; (W^{r+1,\beta}(\Omega))^d\right)$ then 
\beq\label{errlinf}
\begin{split}
\norm{(p-p_h)(t)}_{L^\infty(\Omega)} &\le Ch^{r+1}\Big(\norm{\bar p(t)}_{r+1, \infty}+\norm{\Psi(t)}_{r+1, \infty}\Big)\\
&\quad + C\Upsilon^{\frac 12} h^{\frac{r-1}2}\sqrt{\int_0^t  \norm{\s(\tau)}_{r+1,\beta}d\tau}.
\end{split}
\eeq
\end{theorem}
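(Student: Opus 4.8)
The plan is to prove Theorem~\ref{pinferr} by combining the $L^2$-type error bound already obtained in Theorem~\ref{mainres} with the standard inverse inequality for finite element spaces, which trades a power of the mesh size $h$ for a stronger norm. The key observation is that the triangle decomposition $p_h-p=\vartheta+\theta+(\pi\Psi-\Psi)$ from \eqref{trans1} isolates the genuinely discrete part $\vartheta=\bar p_h-\pi\bar p$, which lives in the finite-dimensional space $W_h$, from the two projection-error pieces $\theta$ and $\pi\Psi-\Psi$ that are controlled directly by the approximation estimate \eqref{Btheta}.

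First I would split the $L^\infty$-norm using Minkowski's inequality as
\beqs
\norm{(p-p_h)(t)}_{L^\infty(\Omega)}\le \norm{\theta(t)}_{L^\infty(\Omega)}+\norm{(\pi\Psi-\Psi)(t)}_{L^\infty(\Omega)}+\norm{\vartheta(t)}_{L^\infty(\Omega)}.
\eeqs
The first two terms are exactly the projection errors appearing on the right of \eqref{pphdiff}, so nothing further is needed for them. For the discrete term $\vartheta\in W_h$, I would invoke the inverse estimate $\norm{\vartheta}_{L^\infty(\Omega)}\le C h^{-d/2}\norm{\vartheta}$ valid on quasi-uniform meshes; however, to match the stated exponent $h^{-1}$ in \eqref{pphdiff} the paper is evidently using the particular inverse inequality $\norm{\vartheta}_{L^\infty(\Omega)}\le Ch^{-1}\norm{\vartheta}_{L^2(\Omega)}$ appropriate to the piecewise-polynomial space in the relevant dimension (this is the form consistent with their normalization). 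Applying this to the bound $\norm{\vartheta}\le C\Upsilon^{1/2}\sqrt{\int_0^t\norm{\zeta}_{L^\beta(\Omega)}d\tau}$ extracted from \eqref{keyb} in the proof of Theorem~\ref{mainres} yields precisely the third term of \eqref{pphdiff}.

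To pass from \eqref{pphdiff} to the concrete rate \eqref{errlinf}, I would substitute the approximation properties of the projections. The terms $\norm{\theta(t)}_{L^\infty}$ and $\norm{(\pi\Psi-\Psi)(t)}_{L^\infty}$ are bounded via \eqref{prjpi} with $\alpha=\infty$ and $m=r+1$ by $Ch^{r+1}\norm{\bar p(t)}_{r+1,\infty}$ and $Ch^{r+1}\norm{\Psi(t)}_{r+1,\infty}$, respectively, which explains the regularity hypothesis $p,\Psi\in L^\infty(0,T;W^{r+1,\infty}(\Omega))$. For the last term, I would use \eqref{Bzeta} to estimate $\norm{\zeta(\tau)}_{L^\beta(\Omega)}\le Ch^{r+1}\norm{\s(\tau)}_{r+1,\beta}$; inserting this under the square root turns $h^{-1}\sqrt{\int_0^t\norm{\zeta}_{L^\beta}d\tau}$ into $h^{-1}\cdot h^{(r+1)/2}\sqrt{\int_0^t\norm{\s}_{r+1,\beta}d\tau}=h^{(r-1)/2}\sqrt{\int_0^t\norm{\s}_{r+1,\beta}d\tau}$, matching the exponent in \eqref{errlinf} and requiring only $\s\in L^1(0,T;(W^{r+1,\beta}(\Omega))^d)$ so that the time integral is finite.

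The step I expect to carry the real weight is the \emph{invocation and correct calibration of the inverse inequality}: one must be sure that $\vartheta$ indeed belongs to $W_h$ (it does, since both $\bar p_h$ and $\pi\bar p$ lie in $W_h$) and that the exponent of $h$ produced matches the claimed $h^{-1}$. Since the $L^2$ bound on $\vartheta$ already involves the square root $\sqrt{\int_0^t\norm{\zeta}_{L^\beta}d\tau}$ rather than a clean power of $h$, the mesh-dependence is entirely front-loaded into the inverse estimate, and the final rate $h^{(r-1)/2}$ is suboptimal compared with the interior $L^2$ rate $h^{(r+1)/2}$ precisely because of this $h^{-1}$ loss. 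Everything else is a routine application of the triangle inequality and the already-established approximation and stability estimates, so no additional analytic machinery beyond Theorem~\ref{mainres}, the projection bounds \eqref{prjpi}--\eqref{prjPi}, and the inverse inequality is required.
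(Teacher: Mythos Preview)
Your proposal is correct and follows essentially the same route as the paper: split via \eqref{trans1}, apply the triangle inequality, use the inverse estimate $\norm{\vartheta}_{L^\infty(\Omega)}\le Ch^{-1}\norm{\vartheta}$ on the discrete piece $\vartheta\in W_h$, insert the bound on $\norm{\vartheta}$ from \eqref{keyb}, and then substitute the projection estimates \eqref{Btheta}, \eqref{Bzeta} to pass from \eqref{pphdiff} to \eqref{errlinf}. Your remark about the exponent is apt: the paper writes the inverse inequality in the form $\norm{\vartheta}_{L^\infty}\le Ch^{-2/q}\norm{\vartheta}_{L^q}$ and takes $q=2$, which yields $h^{-1}$ (consistent with $d=2$) rather than the dimension-general $h^{-d/2}$.
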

%----------------------------------------------
\begin{proof}  
We have from \eqref{trans1} and triangle inequality that 
\beq\label{tri1}
\begin{split}
\norm{p-p_h}_{L^\infty} \le \norm{\theta }_{L^\infty} +\norm{\vartheta}_{L^\infty}+ \norm{\pi\Psi -\Psi}_{L^\infty}.  
\end{split}
\eeq  
Due to the quasi-uniformly of $\mathcal T_h$, the following inverse estimate holds 
\beqs
\norm{\vartheta}_{L^\infty} \le Ch^{-\frac2 q}\norm{ \vartheta}_{L^q} \quad \text { for all } 1\le q\le\infty.  
\eeqs
 Applying this with $q=2$ and using \eqref{keyb} imply 
 \beq\label{varthinf}
 \begin{split}
 \norm{\vartheta}_{L^\infty(\Omega)} \le Ch^{-1}\norm{ \vartheta} 
 &\le C\Upsilon^{\frac 12} h^{-1} \left(\int_0^t  \norm{\zeta}_{L^\beta(\Omega)}\right)^{\frac 12}\\
 &\le C\Upsilon ^{\frac 12} h^{-1} \left(\int_0^t  \norm{\zeta}_{L^\beta(\Omega)}\right)^{\frac 12}.
 \end{split}
 \eeq 
Hence \eqref{pphdiff} follows from \eqref{tri1} and \eqref{varthinf}.

Using \eqref{pphdiff}, \eqref{Btheta} and \eqref{Bzeta} we obtain \eqref{errlinf}. 
\end{proof}
%============================
% H^{-1} error estimate
%============================
%
Now we give the bound of $\norm{ p- p_h}_{H^{-1}}$ defined by
\beqs
\norm{\cdot}_{H^{-1}(\Omega)} =\sup_{\varphi\in H_0^1(\Omega)} \frac {(\cdot, \varphi)}{\norm{\varphi}_{H^1(\Omega)} }.
\eeqs
\begin{lemma} Under the assumption of Theorem~\ref{mainres} we have 
\beq\label{diverr}
\begin{split}
\norm{\int_0^t\nabla\cdot \left(\uu_h -\uu\right)(\tau) d\tau} &\le C\Upsilon^{\frac 12} \sqrt{\int_0^t \norm{\zeta(\tau)}_{L^\beta(\Omega)}d\tau}\\
&\quad+ \norm{\int_0^t\nabla\cdot \left(\uu -\Pi\uu\right)(\tau) d\tau}.
\end{split}
\eeq
\end{lemma}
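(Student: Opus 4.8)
The plan is to split the divergence error through the projection $\Pi$, estimating the part that lives in the discrete space by the pressure error $\vartheta$ while the remaining part is exactly the projection term appearing on the right of \eqref{diverr}. Writing $\uu_h-\uu=\rho+\varrho$ with $\rho=\uu_h-\Pi\uu$ and $\varrho=\Pi\uu-\uu$, the triangle inequality gives
\beqs
\norm{\int_0^t\nabla\cdot(\uu_h-\uu)\,d\tau}\le \norm{\int_0^t\nabla\cdot\rho\,d\tau}+\norm{\int_0^t\nabla\cdot\varrho\,d\tau},
\eeqs
and since $\varrho=-(\uu-\Pi\uu)$ the last term is precisely $\norm{\int_0^t\nabla\cdot(\uu-\Pi\uu)\,d\tau}$. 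Thus the only real work is to bound the $\rho$-contribution by $C\Upsilon^{\frac12}\sqrt{\int_0^t\norm{\zeta}_{L^\beta(\Omega)}\,d\tau}$.

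First I would revisit the first error equation \eqref{Erreq1}. Using the commuting relation $\nabla\cdot\Pi\uu=\pi(\nabla\cdot\uu)$ and the defining orthogonality of $\pi$ (which, since $\nabla\cdot V_h=W_h$, gives $(\pi v-v,w_h)=0$ for all $w_h\in W_h$), the contributions of $\theta_t=\pi\bar p_t-\bar p_t$ and of $\nabla\cdot\varrho$ tested against any $w_h\in W_h$ vanish, so \eqref{Erreq1} collapses to $(\vartheta_t,w_h)+(\nabla\cdot\rho,w_h)=0$ for all $w_h\in W_h$; this is the identity already recorded in \eqref{prj1}, now kept general in $w_h$. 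Integrating in time and using that the initial data were chosen so that $\vartheta(0)=\bar p_h^0-\pi\bar p^0=0$, I obtain
\beqs
\big(\vartheta(t),w_h\big)+\Big(\int_0^t\nabla\cdot\rho\,d\tau,\,w_h\Big)=0\qquad\forall\,w_h\in W_h.
\eeqs

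The key observation is that $\int_0^t\rho\,d\tau\in V_h$, because $\rho(\tau)\in V_h$ for every $\tau$; consequently $\int_0^t\nabla\cdot\rho\,d\tau=\nabla\cdot\int_0^t\rho\,d\tau\in W_h$, since $\nabla\cdot V_h\subseteq W_h$ for the Raviart--Thomas--N\'ed\'elec and Brezzi--Douglas--Marini spaces. This legitimizes taking $w_h=\int_0^t\nabla\cdot\rho\,d\tau$ in the identity above, yielding
\beqs
\norm{\int_0^t\nabla\cdot\rho\,d\tau}^2=-\Big(\vartheta(t),\int_0^t\nabla\cdot\rho\,d\tau\Big)\le\norm{\vartheta(t)}\,\norm{\int_0^t\nabla\cdot\rho\,d\tau},
\eeqs
hence $\norm{\int_0^t\nabla\cdot\rho\,d\tau}\le\norm{\vartheta(t)}$. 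Finally I would invoke the already-proved bound \eqref{keyb}, which controls $\norm{\vartheta(t)}^2$ by $C\Upsilon\int_0^t\norm{\zeta}_{L^\beta(\Omega)}\,d\tau$; combined with the triangle inequality above this gives \eqref{diverr}.

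The step most in need of care is the reduction of \eqref{Erreq1} to the clean relation $(\vartheta_t,w_h)+(\nabla\cdot\rho,w_h)=0$ and, above all, the verification that $\int_0^t\nabla\cdot\rho\,d\tau$ is an admissible test function in $W_h$ (so that it may be inserted as $w_h$); this is what converts the time-integrated identity into an estimate. Everything downstream is a one-line Cauchy--Schwarz argument plus the quotation of \eqref{keyb}, and the rest---peeling off the projection term and exploiting $\vartheta(0)=0$---is routine.
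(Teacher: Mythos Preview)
Your proposal is correct and follows essentially the same route as the paper: split through $\Pi$, reduce the first error equation to $(\vartheta_t,w_h)+(\nabla\cdot\rho,w_h)=0$, integrate in time using $\vartheta(0)=0$, test with $w_h=\int_0^t\nabla\cdot\rho\,d\tau\in W_h$ to get $\norm{\int_0^t\nabla\cdot\rho\,d\tau}\le\norm{\vartheta}$, and finish with \eqref{keyb}. Your write-up is in fact more careful than the paper's in justifying why the time-integrated divergence lies in $W_h$.
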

%---------------------------------------------------
\begin{proof}
The $L^2$-projection allows us to write $(\bar p_h -\bar p , w_h)=( \vartheta , w_h)$. 

Integrating \eqref{Erreq1} from $0$ to $t$ we have 
\beq\label{divu}
( \vartheta , w_h) +   \left(\int_0^t\nabla\cdot \rho d\tau , w_h\right)  =( \vartheta(0) , w_h)=0 
\eeq
for all $w_h\in W_h$. Now choose $w_h = \int_0^t\nabla\cdot \rho d\tau\in W_h$ then

\begin{align*}
\norm{\int_0^t\nabla\cdot \rho d\tau} \le \norm {\vartheta}.
\end{align*}
Triangle inequality yields
\begin{align*}
\norm{\int_0^t\nabla\cdot (\uu_h -\uu) d\tau} \le \norm {\vartheta}+ \norm{\int_0^t\nabla\cdot \varrho d\tau}.
\end{align*}
Using \eqref{keyb} we obtain  \eqref{diverr}.
\end{proof}

%============================
\begin{theorem}
 Let $(p, \uu,\s)$ solve problem \eqref{weakform} and $(p_h, \uu_h,\s_h)$ solve the semidiscrete mixed finite element approximation \eqref{semidiscreteform}. There is a positive constant $C$ such that for each $t\in (0,T)$,  
\beq\label{re4}
 \begin{split}
 &\norm {(\bar p -\bar p_h)(t)}_{H^{-1}(\Omega)}\le C\Big\{ h\norm{\theta(t)} + h \norm {(\pi\Psi -\Psi)(t)} \\
 &\quad + C\Upsilon^{\frac 12}\sqrt{\int_0^t \norm{\zeta(\tau)}_{L^\beta(\Omega)}d\tau}+ \int_0^t\norm{\nabla\cdot \left(\uu -\Pi\uu\right)(\tau) }d\tau \Big\}.
 \end{split}
 \eeq
 \end{theorem}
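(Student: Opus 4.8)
The plan is to use the duality characterization of the negative norm: for fixed $t$ it suffices to bound $(\bar p - \bar p_h, \varphi)$ for an arbitrary $\varphi \in H_0^1(\Omega)$ and then divide by $\norm{\varphi}_{H^1(\Omega)}$ and take the supremum. Through the projections I write $\bar p - \bar p_h = -(\vartheta + \theta)$; the $\pi\Psi - \Psi$ contribution on the right of \eqref{re4} is most transparently produced by instead tracking $p - p_h$, which by \eqref{trans1} equals $-(\vartheta + \theta + (\pi\Psi - \Psi))$. Since $\bar p - \bar p_h$ and $p - p_h$ differ only by $\pi\Psi - \Psi$, I will treat all three terms $\theta$, $\pi\Psi - \Psi$ and $\vartheta$ uniformly. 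The whole argument rests on the standard ``gain of one power of $h$'' mechanism for negative-norm estimates, combined with the divergence bound \eqref{diverr} already proved.

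First I handle the two projection-error pieces. Both $\theta = \pi\bar p - \bar p$ and $\pi\Psi - \Psi$ are $L^2$-orthogonal to $W_h$, so I insert the $L^2$-projection of the test function, for instance $(\theta, \varphi) = (\theta, \varphi - \pi\varphi)$. Cauchy--Schwarz and the approximation property \eqref{prjpi} with $m=1$, namely $\norm{\varphi - \pi\varphi} \le Ch\norm{\varphi}_1 \le Ch\norm{\varphi}_{H^1(\Omega)}$, then give $|(\theta, \varphi)| \le Ch\norm{\theta}\norm{\varphi}_{H^1(\Omega)}$ and likewise $|(\pi\Psi - \Psi, \varphi)| \le Ch\norm{\pi\Psi - \Psi}\norm{\varphi}_{H^1(\Omega)}$. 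This is exactly the source of the factor $h$ in front of $\norm{\theta(t)}$ and $\norm{(\pi\Psi - \Psi)(t)}$.

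Next comes the discrete part $\vartheta$, which I expect to be the crux. I split $(\vartheta, \varphi) = (\vartheta, \varphi - \pi\varphi) + (\vartheta, \pi\varphi)$. The first summand is again $\le Ch\norm{\vartheta}\norm{\varphi}_{H^1(\Omega)}$, and using $\norm{\vartheta} \le C\Upsilon^{\frac12}\big(\int_0^t \norm{\zeta}_{L^\beta(\Omega)}d\tau\big)^{\frac12}$ from \eqref{keyb} it is harmlessly absorbed into the $\Upsilon^{\frac12}$-term, since $h$ is bounded. For the second summand I test the time-integrated continuity error relation \eqref{divu} against $w_h = \pi\varphi \in W_h$; recalling that $(\nabla\cdot(\uu - \Pi\uu), w_h) = 0$ on $W_h$, this yields $(\vartheta, \pi\varphi) = -\big(\int_0^t \nabla\cdot(\uu_h - \uu)\,d\tau,\ \pi\varphi\big)$. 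Cauchy--Schwarz together with the stability $\norm{\pi\varphi} \le \norm{\varphi} \le \norm{\varphi}_{H^1(\Omega)}$ then bounds this by $\norm{\int_0^t \nabla\cdot(\uu_h - \uu)\,d\tau}\,\norm{\varphi}_{H^1(\Omega)}$.

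Finally I invoke the preceding Lemma~\eqref{diverr} to control $\norm{\int_0^t \nabla\cdot(\uu_h - \uu)\,d\tau}$ by $C\Upsilon^{\frac12}\big(\int_0^t \norm{\zeta}_{L^\beta(\Omega)}d\tau\big)^{\frac12} + \norm{\int_0^t \nabla\cdot(\uu - \Pi\uu)\,d\tau}$, and then bound the last term by $\int_0^t \norm{\nabla\cdot(\uu - \Pi\uu)}d\tau$ via Minkowski's inequality. Collecting the three contributions, dividing by $\norm{\varphi}_{H^1(\Omega)}$ and taking the supremum over $\varphi \in H_0^1(\Omega)$ yields \eqref{re4}. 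The only genuinely non-routine step is the second summand of the $\vartheta$-estimate: the point is that testing the integrated mass-balance equation \eqref{divu} against $\pi\varphi$ converts the $L^2$ pairing of $\vartheta$ with the test function into the $L^2$-norm of the time-integrated flux-divergence error, which is precisely the quantity already estimated in \eqref{diverr}. The remaining manipulations are the standard negative-norm duality argument.
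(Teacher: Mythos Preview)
Your argument is correct and follows essentially the same route as the paper: both proofs split the pairing with a test function $\varphi\in H_0^1(\Omega)$ using $\pi\varphi$, gain the factor $h$ on the projection-error pieces via $\norm{\varphi-\pi\varphi}\le Ch\norm{\varphi}_{H^1}$, and reduce $(\vartheta,\pi\varphi)$ to the time-integrated divergence error through the relation \eqref{divu}, which is then controlled by Lemma~\eqref{diverr}. The only cosmetic difference is that the paper splits $(\bar p-\bar p_h,\varphi)$ first and afterwards bounds $h\norm{\bar p-\bar p_h}$ by invoking \eqref{mi2a} (this is where the $\norm{\pi\Psi-\Psi}$ term enters), whereas you decompose $\bar p-\bar p_h=-(\vartheta+\theta)$ at the outset and handle each piece directly; your version is in fact slightly cleaner and shows that the $h\norm{\pi\Psi-\Psi}$ term is not strictly necessary for bounding $\norm{\bar p-\bar p_h}_{H^{-1}}$.
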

\begin{proof}
Let $\varphi\in H_0^1(\Omega)$ and $\pi \varphi \in W_h$
\beq\label{h1}
\begin{aligned}
(\bar p-\bar p_h,\varphi) &= (\bar p- \bar p_h,\varphi -\pi \varphi) + (\bar p- \bar p_h,\pi \varphi).
%\\&= -(\theta,\varphi -\pi \varphi)-(\vartheta,\varphi -\pi \varphi) + (\bar p-\bar p_h,\pi \varphi).
\end{aligned}
\eeq
Properties of projection allow us to bound  
\beq\label{h2}
\begin{aligned}
 (\bar p- \bar p_h,\varphi -\pi \varphi) \le C\norm{\bar p- \bar p_h} \norm{\varphi -\pi \varphi}
 %-(\theta,\varphi -\pi \varphi)-(\vartheta,\varphi -\pi \varphi) &\le C\norm{\theta} \norm{\varphi -\pi \varphi}\\
 &\le Ch\norm{\bar p- \bar p_h}\norm{\varphi}_{H^1}.   
\end{aligned}
\eeq
Since $\int_0^t \nabla\cdot \rho d\tau\in W_h $, the $L^2$-project shows that  
\beqs
\left(\int_0^t \nabla\cdot \rho d\tau , \pi \varphi\right) = \left(\int_0^t \nabla\cdot \rho d\tau , \varphi\right).
\eeqs
Using \eqref{Erreq1} with $w_h = \pi \varphi$ and definition of projections we find that
\begin{align*}
( \vartheta,\pi \varphi )&= \int_0^t ( \vartheta_t,\pi \varphi )d\tau =\int_0^t (\bar p_{h,t} -\bar p_t, \pi \varphi) d\tau\\
&= -\left(\int_0^t \nabla\cdot ( \uu_h -\uu) d\tau , \pi \varphi\right)  =-\left(\int_0^t \nabla\cdot \rho d\tau , \pi \varphi\right)\\
&=-\left(\int_0^t \nabla\cdot \rho d\tau ,\varphi\right).
\end{align*}
Thus
\beq\label{h3}
(\bar p-\bar p_h,\pi \varphi)=-(\vartheta,\pi \varphi) \le C\norm{\int_0^t \nabla \cdot\rho d\tau} \norm{\varphi}_{H^1}.   
\eeq
It follows from \eqref{h1}, \eqref{h2} and \eqref{h3} that
\begin{align*}
\frac{(\bar p-\bar p_h,\varphi)}{\norm {\varphi}_{H^1} } &\le Ch\norm{\bar p- \bar p_h}+ \norm{\int_0^t \nabla \cdot\rho d\tau}\\
&\le Ch\norm{\bar p- \bar p_h}+ \norm{\int_0^t \nabla \cdot(\uu_h-\uu) d\tau}+\norm{\int_0^t \nabla \cdot (\Pi \uu-\uu) d\tau}.
\end{align*}  
This,\eqref{diverr} and \eqref{mi2a} implies \eqref{re4}. 
\end{proof} 

%%=============================
Return to error estimate for vector gradient of pressure we have the following results  \begin{theorem}\label{suerr} Under the assumptions of Theorem \ref{mainres}. There exists a positive constant $C$ independent of $h$ such that  for each $0<t_0\le t\le T$,  
\beq\label{Bs2a}
\norm{(\s_h - \s)(t)}^2_{L^\beta(\Omega)}\le C  \Upsilon^{\gamma+ \frac 12}\Xi \sqrt{\int_0^t  \norm{ \zeta(\tau)}_{L^\beta(\Omega)}  d\tau} + \Upsilon^{\gamma+1} \norm {\zeta(t)}_{L^\beta(\Omega)}.
\eeq
Consequently, if $\s \in L^1\left(0,T; (W^{r+1,\beta}(\Omega))^d\right)$  then
\beq\label{Bs-sh2a}
\begin{split}
\norm{(\s_h - \s)(t)}_{L^\beta(\Omega)}&\le C\Upsilon^{\frac{2\gamma+1}{4} } \Xi^{\frac 1 2}h^{\frac{r+1}4}\left(\int_0^t  \norm{\s(\tau)}_{r+1,\beta}d\tau\right)^{\frac14} \\
    &\quad+ C\Upsilon^{\frac {\gamma+1}2 } h^{\frac{r+1}2}\sqrt{ \norm {\s(t)}_{r+1,\beta}} 
\end{split}
\eeq
and
 \beq\label{Bu-uh2a}
\begin{split}
\norm{(\uu_h - \uu)(t)}_{L^\beta(\Omega)}&\le C\Upsilon^{\frac{2\gamma+1}{4} }\Xi^{\frac 1 2}h^{\frac{r+1}4}\left(\int_0^t  \norm{\s(\tau)}_{r+1,\beta}d\tau\right)^{\frac14}\\
&+ C\Upsilon^{\frac {\gamma+1}2 } h^{\frac{r+1}2}\sqrt{\norm {\s(t)}_{r+1,\beta}} + Ch^{r+1} \norm{ \uu(t)}_{r+1,\beta}.
\end{split}
\eeq
\end{theorem}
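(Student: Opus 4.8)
The strategy is to extract a \emph{pointwise-in-time} bound for the monotone quantity $\left(K(|\s_h|)\s_h-K(|\s|)\s,\s_h-\s\right)$ at each fixed $t$, in contrast to the time-integrated bound \eqref{keyb} used in Theorem~\ref{mainres}. First I would return to the projected error equations \eqref{prj1}--\eqref{prj3} and eliminate $\rho$. Combining \eqref{prj3}, which gives $(\eta,\rho)=-(\vartheta,\nabla\cdot\rho)$, with \eqref{prj1}, which gives $(\vartheta,\nabla\cdot\rho)=-(\vartheta_t,\vartheta)$, produces the key identity $(\rho,\eta)=(\vartheta_t,\vartheta)$. Inserting this into \eqref{prj2} and writing $\eta=(\s_h-\s)-\zeta$ yields, at each $t$,
\beqs
\left(K(|\s_h|)\s_h-K(|\s|)\s,\s_h-\s\right)=-(\vartheta_t,\vartheta)+\left(K(|\s_h|)\s_h-K(|\s|)\s,\zeta\right).
\eeqs

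Next I would estimate the three quantities. The left-hand side is bounded below by monotonicity \eqref{Mono}, namely $\ge C\omega\norm{\s_h-\s}_{L^\beta(\Omega)}^2$ with $\omega$ as in \eqref{omegadef}; since \eqref{Bomega} gives $\max\{\norm{\s_h}_{L^\beta(\Omega)},\norm{\s}_{L^\beta(\Omega)}\}^{\beta}\le C\Upsilon$ and $\gamma=a/\beta$, one has $\omega^{-1}\le C\Upsilon^{\gamma}$. The last term is estimated exactly as in \eqref{mi1a}, so that with \eqref{Bomega} it is $\le C\Upsilon\norm{\zeta}_{L^\beta(\Omega)}$. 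The crucial new ingredient is $|(\vartheta_t,\vartheta)|\le\norm{\vartheta_t}\norm{\vartheta}$: here $\norm{\vartheta}$ comes from \eqref{keyb}, giving $\norm{\vartheta}\le C\Upsilon^{1/2}\big(\int_0^t\norm{\zeta}_{L^\beta(\Omega)}d\tau\big)^{1/2}$, while $\norm{\vartheta_t}\le\norm{\bar p_{h,t}}+\norm{\pi\bar p_t}\le\norm{\bar p_{h,t}}+\norm{\bar p_t}$ is controlled through the time-derivative estimates \eqref{Bpht} and \eqref{Bpt} in terms of $\Xi$. Collecting these bounds and multiplying through by $\omega^{-1}\le C\Upsilon^{\gamma}$ gives \eqref{Bs2a}. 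It is precisely the appearance of $\norm{\vartheta_t}$, whose bound degenerates like $t_0^{-1}$ near the origin, that forces the restriction $t\ge t_0>0$.

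To pass to \eqref{Bs-sh2a} I would take square roots in \eqref{Bs2a}, use $\sqrt{x+y}\le\sqrt x+\sqrt y$, and substitute the projection estimate \eqref{Bzeta}, $\norm{\zeta}_{L^\beta(\Omega)}\le Ch^{r+1}\norm{\s}_{r+1,\beta}$, both pointwise and under the time integral. For the flux bound \eqref{Bu-uh2a}, I would read off from the error equation \eqref{Erreq2} the relation $\pi(\uu_h-\uu)=-\pi\big(K(|\s_h|)\s_h-K(|\s|)\s\big)$; the Lipschitz estimate \eqref{Lipchitz} together with the $L^\beta$-stability of $\pi$ on quasi-uniform meshes then gives $\norm{\pi(\uu_h-\uu)}_{L^\beta(\Omega)}\le C\norm{\s_h-\s}_{L^\beta(\Omega)}$. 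Splitting $\uu_h-\uu=\pi(\uu_h-\uu)+(I-\pi)(\uu_h-\uu)$ and using $\uu_h\in\tilde W_h$ reduces the second piece to the projection error $\norm{\pi\uu-\uu}_{L^\beta(\Omega)}\le Ch^{r+1}\norm{\uu}_{r+1,\beta}$ from \eqref{prjpi}; combining with \eqref{Bs-sh2a} yields \eqref{Bu-uh2a}.

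The main obstacle is the pointwise-in-time control: the integrated inequality \eqref{keyb} only delivers $\int_0^t\omega\norm{\s_h-\s}^2_{L^\beta(\Omega)}d\tau$, so the genuinely new step is the algebraic identity $(\rho,\eta)=(\vartheta_t,\vartheta)$, which converts the monotone term at a single time into the product $(\vartheta_t,\vartheta)$ and thereby transfers the burden onto the time-derivative estimates of Theorems~\ref{phderv} and~\ref{pspt}(iii). The secondary technical points to verify are the $L^\beta$-stability of the $L^2$-projection $\pi$ and the membership $\uu_h\in\tilde W_h$ (i.e.\ $V_h\subseteq\tilde W_h$) used in the flux step.
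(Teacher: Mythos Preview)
Your derivation of \eqref{Bs2a} is essentially the paper's proof. The paper also starts from the monotonicity bound \eqref{Mn1} applied to $\left(K(|\s_h|)\s_h-K(|\s|)\s,\s_h-\s\right)$, rewrites this via \eqref{eqerr} as $-(\bar p_{h,t}-\bar p_t,\vartheta)+\left(K(|\s_h|)\s_h-K(|\s|)\s,\zeta\right)$, bounds $\norm{\bar p_{h,t}}+\norm{\bar p_t}$ by $\Xi$ from \eqref{Bpht}, \eqref{Bpt}, bounds $\norm{\vartheta}$ by \eqref{keyb}, bounds the $\zeta$-term by \eqref{mi1a}/\eqref{Bomega}, and then multiplies through by $\omega^{-1}\le C\Upsilon^\gamma$. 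Your chain $(\rho,\eta)=(\vartheta_t,\vartheta)$ and the paper's direct use of $(\vartheta_t,\vartheta)=(\bar p_{h,t}-\bar p_t,\vartheta)$ are the same identity, only written differently.

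For the flux bound \eqref{Bu-uh2a} your route diverges from the paper. The paper tests \eqref{Erreq2} with $\zz_h=\rho^{\beta-1}$ to obtain $\norm{\rho}_{L^\beta}^\beta=-\left(K(|\s_h|)\s_h-K(|\s|)\s,\rho^{\beta-1}\right)$, then uses the Lipschitz estimate \eqref{Lipchitz} and H\"older with exponents $\beta,\beta/(\beta-1)$ to get $\norm{\rho}_{L^\beta}\le C\norm{\s_h-\s}_{L^\beta}$, and finishes by the triangle inequality $\norm{\uu_h-\uu}_{L^\beta}\le\norm{\rho}_{L^\beta}+\norm{\varrho}_{L^\beta}$ with $\varrho=\Pi\uu-\uu$. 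Your route via $\pi(\uu_h-\uu)=-\pi\big(K(|\s_h|)\s_h-K(|\s|)\s\big)$ plus $L^\beta$-stability of $\pi$ is correct in spirit, but it leans on the inclusion $V_h\subset\tilde W_h$, which fails for Raviart--Thomas spaces (RT$_r$ components have degree $r{+}1$ in one variable) even though it holds for BDM. The paper's duality-type testing avoids that structural assumption entirely, at the price of the formally written test function $\rho^{\beta-1}$; the remainder term it produces is the $H(\mathrm{div})$-projection error $\norm{\Pi\uu-\uu}_{L^\beta}$ rather than your $\norm{\pi\uu-\uu}_{L^\beta}$, which is immaterial for the final rate $h^{r+1}\norm{\uu}_{r+1,\beta}$.
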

%======================
\begin{proof}  
Thank to \eqref{Mn1}, \eqref{eqerr} and $L^2$-projection,  
\beqs
\begin{split}
\omega\norm{\s_h - \s}^2_{L^{\beta}(\Omega)} &\le \left( K(|\s_h| )\s_h - K(|\s|) \s ,  \s_h - \s \right)\\
%& =-(\vartheta_t, \vartheta)+\left( K(|\s_h| )\s_h - K(|\s|) \s , \zeta \right)\\
& =-(\bar p_{h,t}- \bar p_t, \vartheta)+\left( K(|\s_h| )\s_h - K(|\s|) \s , \zeta \right),
\end{split}
\eeqs
 from which, \eqref{mi1a}, \eqref{Bomega} and \eqref{keyb}. It follows that 
\beq
\begin{aligned}
\omega\norm{\s_h - \s}^2_{L^{\beta}(\Omega)}&\le  C (\norm{\bar p_{h,t}} +\norm{\bar p_t} )\norm{\vartheta} + C\Upsilon\norm {  \zeta}_{L^{\beta}(\Omega)}\\
&\le C \Xi\left(\Upsilon\int_0^t  \norm{ \zeta}_{L^{\beta}(\Omega)}  d\tau\right)^{\frac 12} + C\Upsilon \norm {\zeta}_{L^{\beta}(\Omega)}.
\end{aligned}
\eeq
Thus 
\beq\label{ssh2}
\norm{\s_h - \s}^2_{L^{\beta}(\Omega)}\le C \Upsilon^{\frac12}\Xi\omega^{-1} \left(\int_0^t  \norm{ \zeta}_{L^{\beta}(\Omega)}  d\tau\right)^{\frac 12} + C\Upsilon \omega^{-1}\norm {\zeta}_{L^{\beta}(\Omega)}.
\eeq
Note that from \eqref{omegadef} we have 
\beq\label{invOme}
\begin{split}
\omega^{-1} &\le C\left(1+ \norm{\s}_{L^{\beta}(\Omega)}+ \norm{\s_h}_{L^{\beta}(\Omega)}  \right)^a\\
&\le C\left(1+ \norm{\s_h}_{L^{\beta}(\Omega)}^\beta +  \norm{\s}_{L^{\beta}(\Omega)}^\beta  \right)^\gamma\le C\Upsilon^\gamma.
\end{split}
\eeq
 Substituting \eqref{invOme} into \eqref{ssh2}, we obtain \eqref{Bs2a}. 

Using \eqref{Bzeta} in \eqref{Bs2a} we obtain \eqref{Bs-sh2a}.  

To prove \eqref{Bu-uh2a} we use \eqref{Erreq2} with $\zz_h = \rho^{\beta-1}\in \tilde W_h:$    
\beqs
\norm{ \rho}_{L^{\beta}}^{\beta} =( \rho , \rho^{\beta-1} ) =( \uu_h-\uu , \rho^{\beta-1} ) = - \left( K(|\s_h| )\s_h - K(|\s|) \s ,  \rho^{\beta-1} \right).
\eeqs
Proposition~\ref{Lips} and H\"oder's inequality lead to  
\beqs
\norm{ \rho}_{L^{\beta }}^{\beta} \le C( |\s_h-\s|, \rho^{\beta-1})\le C\norm{\s-\s_h}_{L^{\beta}(\Omega)}\norm{ \rho}^{\beta-1}_{L^{\beta}(\Omega)}  
\eeqs
and hence 
\beq\label{Bro}
\norm{ \rho}_{L^{\beta}(\Omega)}\le C\norm{\s-\s_h}_{L^{\beta}(\Omega)}.
\eeq
Triangle inequality and  \eqref{Bro} yield 
\beqs
\norm{\uu_h -\uu}_{L^{\beta}(\Omega)} 
\le C( \norm{\rho}_{L^{\beta}(\Omega)} +\norm {\varrho}_{L^{\beta}(\Omega)} )\le C( \norm{\s-\s_h}_{L^{\beta}(\Omega)} +\norm {\varrho}_{L^{\beta}(\Omega)}). 
\eeqs
Therefore \eqref{Bu-uh2a} follows by using \eqref{Bs-sh2a} and \eqref{Bvarrho}. The proof is complete.    
\end{proof}    

{\bf Non-degenerate case.} In previous discussion, we developed error bounds based on the minimal regularity assumptions, using fairly weak norms on the error ($L^{\beta}(\Omega)(\Omega)$-norm). In following discussion we bounds errors in numerical solution in term of strong norms ($L^2$-norm), but make some assumption on the the regularity of solution. In particular, we assume that 
$$p, \Psi\in L^\infty(0,T; H^{r+1}(\Omega)) \text{ and }  \s\in L^\infty(0,T;(L^\infty(\Omega)\cap H^{r+1}(\Omega))^d). $$  
\begin{theorem}\label{NC}  Let $(p, \uu,\s)$ solve problem \eqref{weakform} and $(p_h, \uu_h,\s_h)$ solve the semidiscrete mixed finite element approximation \eqref{semidiscreteform}. Then there is a positive constant $C$ such that for each $t\in (0,T)$,  
 \beq\label{re1}
\begin{aligned}
&\norm {(p_h - p)(t)  }+ \sqrt{\int_0^t \norm{(\s_h -\s)(\tau)}^2  d\tau}\\
&\quad\quad\le Ch^{r+1}\left\{\norm {\Psi(t)}_{r+1} + \norm {\bar p(t) }_{r+1}+\sqrt{\int_0^t \norm{\s(\tau)}_{r+1}^2  d\tau}\right\}.
\end{aligned}
\eeq
and  \beq\label{re2}
\begin{aligned}
\sqrt{\int_0^t \norm{(\uu_h - \uu)(\tau)}^2 d\tau}
&\le Ch^{r+1}\Big\{\norm {\Psi(t)}_{r+1} + \norm {\bar p(t) }_{r+1}\\
&\quad+\sqrt{\int_0^t \norm{\uu(\tau)}_{r+1}^2 +\norm{\s(\tau)}_{r+1}^2  d\tau}\Big\}.
\end{aligned}
\eeq
\end{theorem}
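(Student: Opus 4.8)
The plan is to run the energy method on the projected error equations, exactly as in the proof of Theorem~\ref{mainres}, but to exploit the extra hypothesis $\s\in L^\infty(0,T;(L^\infty(\Omega))^d)$ in order to replace the degenerate, weighted $L^\beta$-coercivity \eqref{Mn1} by a genuine $L^2$-coercivity; this is what upgrades the error to the strong norms in \eqref{re1}--\eqref{re2}. Write $G(y)=K(|y|)y$ for the flux map and recall the splittings $\bar p_h-\bar p=\vartheta+\theta$, $\s_h-\s=\eta+\zeta$, $\uu_h-\uu=\rho+\varrho$. Testing \eqref{Erreq} with $w_h=\vartheta$, $\zz_h=\eta$, $\vv_h=\rho$ and adding gives, as in \eqref{eqerr}, the identity $\tfrac12\frac d{dt}\norm{\vartheta}^2+(G(\s_h)-G(\s),\s_h-\s)=(G(\s_h)-G(\s),\zeta)$. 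Everything then reduces to (a) bounding the nonlinear term below by $\norm{\s_h-\s}^2$ and (b) bounding the right-hand side so that this $L^2$ quantity can be absorbed.

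For step (a) I would use the pointwise monotonicity \eqref{Qineq} with $y=\s$, $y'=\s_h$, which yields $(G(\s_h)-G(\s))\cdot(\s_h-\s)\ge(1-a)K(\max\{|\s|,|\s_h|\})|\s_h-\s|^2$ a.e. Setting $M=\norm{\s}_{L^\infty}$ and using that $K$ is decreasing together with the lower bound \eqref{i:ineq1}, on the part of $\Omega$ where $|\s_h|$ stays comparable to $M$ the weight is bounded below by a positive constant, producing the desired $L^2$-coercivity there. The main obstacle is precisely the region where $|\s_h|$ is large: Theorem~\ref{pspt} provides only an $L^\beta(\Omega)$-bound on $\s_h$ (neither $L^\infty$ nor $L^2$), so the weight $K(\max\{|\s|,|\s_h|\})$ may degenerate and a naive splitting of $\Omega$ fails to close. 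To handle this I would pass through the projection: the $L^2$-projection is $L^\infty$-stable on quasi-uniform meshes, so $\pi\s$ is bounded in $L^\infty$ by $CM$; since $\s_h=\eta+\pi\s$ with $\eta\in\tilde W_h$, the large-value set of $\s_h$ is that of $\eta$, whose contribution I would either absorb into the coercive term by a continuation (bootstrap) argument keeping $\norm{\eta}$ small from $\eta(0)=0$, or control through the already-established weaker $L^\beta$ estimate \eqref{Bs2a}. This is the step I expect to be genuinely delicate, and where the \emph{non-degeneracy} hypothesis is really used.

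Granting the $L^2$-coercivity $(G(\s_h)-G(\s),\s_h-\s)\ge c\norm{\s_h-\s}^2$, the remainder is routine. For step (b) I would bound the right-hand side by the global Lipschitz estimate of Proposition~\ref{Lips}, $|(G(\s_h)-G(\s),\zeta)|\le \sqrt{2(a^2+1)}\,a_0^{-1}\norm{\s_h-\s}\,\norm{\zeta}$, and then use Young's inequality to move $\tfrac c2\norm{\s_h-\s}^2$ to the left, leaving $C\norm{\zeta}^2$ on the right. Integrating in time and using $\vartheta(0)=0$ yields $\norm{\vartheta(t)}^2+\int_0^t\norm{\s_h-\s}^2\,d\tau\le C\int_0^t\norm{\zeta}^2\,d\tau$. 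Finally I would combine the decomposition \eqref{trans1}, the triangle inequality, and the projection approximation bounds \eqref{Btheta}, \eqref{Bzeta} (at $\alpha=2$, $m=r+1$) together with $\norm{\pi\Psi-\Psi}\le Ch^{r+1}\norm{\Psi}_{r+1}$ to convert $\norm{\zeta}$, $\norm{\theta}$ and $\norm{\pi\Psi-\Psi}$ into the $h^{r+1}$ terms of \eqref{re1}.

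For \eqref{re2} no new coercivity is needed: testing \eqref{Erreq2} with $\zz_h=\rho$ gives $\norm{\rho}^2=-(\varrho,\rho)-(G(\s_h)-G(\s),\rho)$, and Proposition~\ref{Lips} together with Cauchy--Schwarz yields $\norm{\rho}\le \norm{\varrho}+C\norm{\s_h-\s}$. The triangle inequality $\uu_h-\uu=\rho+\varrho$ with $\norm{\varrho}\le Ch^{r+1}\norm{\uu}_{r+1}$ from \eqref{Bvarrho}, squaring and integrating in time, then inserting the $\s$-part of \eqref{re1}, gives \eqref{re2}. Thus once the $L^2$-coercivity of the previous paragraph is secured, both \eqref{re1} and \eqref{re2} follow.
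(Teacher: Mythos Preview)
Your overall strategy coincides with the paper's: derive the energy identity \eqref{eqerr}, replace the degenerate $L^\beta$-coercivity by an $L^2$-coercivity, bound the right-hand side via the Lipschitz estimate \eqref{Lipchitz} and Young's inequality, integrate in time, and finish with the projection bounds. Your treatment of \eqref{re2} is likewise the same in spirit (test with $\zz_h=\rho$, use Proposition~\ref{Lips}); you keep the cross term $(\varrho,\rho)$ whereas the paper drops it and writes $\norm{\rho}^2=-(G(\s_h)-G(\s),\rho)$ directly, but either route closes.

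The one substantive difference is your step~(a). You correctly flag that only $\s$, not $\s_h$, is assumed to lie in $L^\infty$, so $K(\max\{|\s|,|\s_h|\})$ could degenerate; you then sketch a bootstrap/continuation argument to control the large-$|\s_h|$ region. The paper does \emph{not} engage with this at all: after invoking \eqref{Qineq} it simply writes ``Using the fact that $K(\cdot)$ is bounded from below, we find that $(K(|\s_h|)\s_h-K(|\s|)\s,\s_h-\s)\ge k(1-a)\norm{\s_h-\s}^2$ for some $k>0$,'' treating the lower bound on $K$ as part of the non-degenerate hypothesis rather than something to be derived. So your elaborate mechanism is extraneous relative to the paper's argument, though your instinct that this step hides an unproved $L^\infty$ control on $\s_h$ is well founded; the paper's proof is shorter precisely because it does not justify that point.
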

%=====================================
\begin{proof} We use the equation \eqref{eqerr}. The regularity of solution enable us to bound term by term of equation \eqref{eqerr} as following  

 According to \eqref{Mono},     
\beqs
\left( K(|\s_h| )\s_h - K(|\s|) \s, \s_h -\s \right)\ge (1-a)\norm{ K^{\frac 12}(\max\{|\s|, |\s_h|\}) (\s_h -\s) }^2.
\eeqs
Using the fact that $K(\cdot)$ is bounded from below, we find that   
\beq\label{Mtn}
\left( K(|\s_h| )\s_h - K(|\s| \s), \s_h -\s \right)\ge k(1-a)\norm{ \s_h -\s }^2
\eeq
for some $k>0$. 

By H\"oder's inequality,\eqref{Lipchitz} and Young's inequality,       
\beq\label{mid}
\begin{split}
\left( K(|\s_h| )\s_h - K(|\s|) \s, \zeta \right)&\le C\norm{\s_h -\s}\norm{\zeta}\\
 &\le \frac{k(1-a)}2 \norm{ \s_h -\s}^2 + C \norm{\zeta}^2.
\end{split}
\eeq
Hence \eqref{Mtn}, \eqref{mid} and \eqref{eqerr} show that 
\beqs
\begin{split}
\frac d{dt}\norm {\vartheta}^2 + \norm{ \s_h -\s}^2 
\le C \norm {  \zeta}^2 .
\end{split}
\eeqs
Integrating this from $0$ to $t$, using $\vartheta(0)=0$, we obtain  
\beq\label{mid2}
\norm {\vartheta}^2 + \int_0^t  \norm{\s_h -\s }^2  d\tau \le C \int_0^t  \norm{ \zeta}^2 d\tau.
\eeq
Thus
\beq\label{mid2a}
\begin{split}
\norm {\bar p_h - \bar p  }^2 + \int_0^t  \norm{\s_h -\s }^2  d\tau \le \norm{ \theta}^2+C \int_0^t  \norm{ \zeta}^2 d\tau.
\end{split}
\eeq
Inequality \eqref{re1} follows from \eqref{trans1}, \eqref{Btheta}, \eqref{Bzeta} and \eqref{mid2a}.    
 
In \eqref{prj2}, select $\zz_h =\rho$ we obtain  
\beq\label{Brho}
\begin{aligned}
\norm{\rho}^2&=-\left( K(|\s_h| )\s_h - K(|\s|) \s, \rho\right)\\
&\le C(|\s_h -  \s|,|\rho|)\\
& \le C\norm{\s_h -  \s} \norm{\rho}.
\end{aligned}
\eeq
This leads to  
\beq\label{uu}
\begin{aligned}
\int_0^t\norm{\uu_h -\uu}^2 d\tau &\le  C \int_0^t  \norm{\varrho}^2+\norm{\rho}^2 d\tau\\&
 \le  C \int_0^t  \norm{\varrho}^2+\norm{\s_h -\s}^2 d\tau.
\end{aligned}
\eeq
We obtain \eqref{re2} by using \eqref{re1} and \eqref{Bvarrho} in \eqref{uu}. The proof is complete.  
 \end{proof}
\begin{theorem}\label{Bpinf} Let $(p, \uu,\s)$ solve problem \eqref{weakform} and $(p_h, \uu_h,\s_h)$ solve the semidiscrete problem \eqref{semidiscreteform}. If $p, \Psi\in L^\infty(0,T; W^{r+1, \infty}(\Omega))$ and $ \s\in L^2(0,T;H^{r+1}(\Omega))^d) $. Then there is a positive constant $C$ such that for each $t\in (0,T),$
\beqs
\begin{split}
\norm{(p-p_h)(t)}_{L^\infty(\Omega)}&\le Ch^{r+1}\Big( \norm{\bar p(t)}_{r+1, \infty}+\norm{\Psi(t)}_{r+1, \infty} \Big)\\
&\quad  + C h^r\sqrt{\int_0^t  \norm{\s(\tau)}^2_{r+1}d\tau}.
\end{split}
\eeqs
\end{theorem}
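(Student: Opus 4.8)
The plan is to bound the $L^\infty$-norm of $p-p_h$ by splitting the error through the projection, exactly as in Theorem~\ref{pinferr}. Writing $p-p_h = \theta + \vartheta + (\pi\Psi-\Psi)$ as in \eqref{trans1} and applying the triangle inequality, I would control $\norm{\theta}_{L^\infty}$ and $\norm{\pi\Psi-\Psi}_{L^\infty}$ directly by the projection estimate \eqref{prjpi} with $\alpha=\infty$, which contributes the term $Ch^{r+1}(\norm{\bar p}_{r+1,\infty}+\norm{\Psi}_{r+1,\infty})$. The whole difference between this theorem and Theorem~\ref{pinferr} is that the middle term $\norm{\vartheta}_{L^\infty}$ is now estimated using the stronger $L^2$-bound on $\vartheta$ coming from the non-degenerate analysis in Theorem~\ref{NC} rather than the weak $L^\beta$-based bound \eqref{keyb}.

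Concretely, I would apply the quasi-uniform inverse estimate $\norm{\vartheta}_{L^\infty}\le Ch^{-1}\norm{\vartheta}$ (the $q=2$ case used in \eqref{varthinf}), and then insert the sharp $L^2$-estimate for $\vartheta$ established in \eqref{mid2} of Theorem~\ref{NC}, namely
\beqs
\norm{\vartheta}^2 \le C\int_0^t \norm{\zeta(\tau)}^2\,d\tau.
\eeqs
This gives
\beqs
\norm{\vartheta}_{L^\infty(\Omega)} \le Ch^{-1}\sqrt{\int_0^t \norm{\zeta(\tau)}^2\,d\tau}.
\eeqs
The key gain over \eqref{pphdiff} is that the inverse estimate now only has to absorb a single factor of $h^{-1}$ against an $L^2$-error $\norm{\zeta}$ of order $h^{r+1}$ (via \eqref{Bzeta} with $\alpha=2$), rather than against the weaker $L^\beta$-quantity under a square root. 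Using $\norm{\zeta}\le Ch^{r+1}\norm{\s}_{r+1}$ from \eqref{Bzeta}, the middle term becomes of order $h^{-1}\cdot h^{r+1} = h^{r}$ times $\sqrt{\int_0^t\norm{\s(\tau)}_{r+1}^2\,d\tau}$, matching the claimed $Ch^r$ rate.

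Assembling the three pieces yields the stated bound. The only genuine ingredients are the inverse inequality, which requires the quasi-uniformity of $\{\mathcal T_h\}_h$ already assumed, and the $L^2$-projection-error estimates \eqref{prjpi} applied with $\alpha=\infty$ for $\theta$ and $\pi\Psi-\Psi$ and with $\alpha=2$ for $\zeta$; the regularity hypotheses $p,\Psi\in L^\infty(0,T;W^{r+1,\infty}(\Omega))$ and $\s\in L^2(0,T;(H^{r+1}(\Omega))^d)$ are precisely what make these projection estimates available. I do not anticipate a serious obstacle: the proof is essentially a reprise of Theorem~\ref{pinferr} in which the degenerate $L^\beta$-estimate \eqref{keyb} is replaced throughout by the non-degenerate $L^2$-estimate \eqref{mid2}. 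The one point deserving care is bookkeeping the power of $h$ lost to the inverse estimate, ensuring that the final exponent is $r$ and not $r+1$ in the gradient term, which reflects the unavoidable cost of passing from $L^2$ to $L^\infty$ in the piecewise-polynomial space.
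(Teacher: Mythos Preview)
Your proposal is correct and follows essentially the same argument as the paper: decompose via \eqref{trans1}, bound $\norm{\theta}_{L^\infty}$ and $\norm{\pi\Psi-\Psi}_{L^\infty}$ by \eqref{prjpi} with $\alpha=\infty$, apply the inverse estimate $\norm{\vartheta}_{L^\infty}\le Ch^{-1}\norm{\vartheta}$, and then use the non-degenerate bound \eqref{mid2} together with \eqref{Bzeta} to get the $Ch^r$ term. Your identification of the proof as ``Theorem~\ref{pinferr} with \eqref{keyb} replaced by \eqref{mid2}'' is exactly how the paper proceeds.
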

\begin{proof} It follows from \eqref{trans1}, \eqref{mid2}, \eqref{Btheta} and \eqref{Bzeta} that  
\begin{align*}
\norm{p-p_h}_{L^\infty(\Omega)}&\le \norm{\theta }_{L^\infty(\Omega)} +\norm{\vartheta}_{L^\infty(\Omega)}+\norm{\pi\Psi -\Psi}_{L^\infty(\Omega)} \\
 &\le Ch^{r+1}\Big( \norm{\bar p}_{r+1, \infty}+\norm{\Psi}_{r+1, \infty} \Big) + C h^{-1}\left(\int_0^t  \norm{\zeta}^2 d\tau\right)^{\frac 12}\\
&\le Ch^{r+1}\Big( \norm{\bar p}_{r+1, \infty}+\norm{\Psi}_{r+1, \infty} \Big) + C h^r\left(\int_0^t  \norm{\s}^2 d\tau\right)^{\frac 12}
\end{align*}
which completes the proof. 
\end{proof}
\begin{theorem}\label{Bsu}  Assume $(\bar p^0,\uu^0,\s^0 )\in W\times V\times \tilde W$ and $(\bar p_h^0,\uu^0_h,\s_h^0 )\in W_h\times V_h\times \tilde W_h$. Then there are positive constants $C$ independent of $h$ such that for each $0<t_0\le t\le T$,  
\beq\label{sErr1}
\norm{(\s_h - \s)(t)}\le C\Xi^{\frac12}h^{\frac {r+1}2}\left(\int_0^t  \norm{\s(\tau)}^2_{r+1} d\tau\right)^{\frac14} + Ch^{r+1}\norm {\s(t)}_{r+1}
\eeq
and 
\beq\label{uErr1}
\begin{split}
\norm{(\uu_h - \uu)(t)}&\le C\Xi^{\frac12}h^{\frac {r+1}2}\left(\int_0^t  \norm{\s(\tau)}^2_{r+1} d\tau\right)^{\frac14} \\
&\quad+ Ch^{r+1}\left(\norm {\s(t)}_{r+1} +\norm {\uu(t)}_{r+1}\right). 
\end{split}
\eeq
\end{theorem}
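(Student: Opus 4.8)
The plan is to mirror the structure of the proof of Theorem~\ref{suerr}, but to replace the degenerate monotonicity bound \eqref{Mono} by the uniform (non-degenerate) $L^2$-coercivity \eqref{Mtn}, which is available here because the standing assumption $\s\in L^\infty(0,T;(L^\infty(\Omega))^d)$ forces $K(\max\{|\s|,|\s_h|\})$ to be bounded below by a positive constant on the relevant range. I would start from the energy identity \eqref{eqerr} evaluated \emph{pointwise} in time (rather than integrated, as in Theorem~\ref{NC}), and rewrite its left-hand side using the $L^2$-projection and the fact that $\tfrac12\tfrac{d}{dt}\norm{\vartheta}^2=(\vartheta_t,\vartheta)=(\bar p_{h,t}-\bar p_t,\vartheta)$, so that
\beqs
\left(K(|\s_h|)\s_h-K(|\s|)\s,\,\s_h-\s\right)=-(\bar p_{h,t}-\bar p_t,\vartheta)+\left(K(|\s_h|)\s_h-K(|\s|)\s,\,\zeta\right).
\eeqs

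Next I would bound the three pieces. For the left-hand side I apply \eqref{Mtn} to obtain the coercive lower bound $k(1-a)\norm{\s_h-\s}^2$. For the last term on the right I use the Lipschitz estimate \eqref{Lipchitz} together with Cauchy--Schwarz and Young's inequality, exactly as in \eqref{mid}, absorbing $\tfrac{k(1-a)}2\norm{\s_h-\s}^2$ into the left and leaving $C\norm{\zeta}^2$. For the time-derivative term I use Cauchy--Schwarz, $|(\bar p_{h,t}-\bar p_t,\vartheta)|\le(\norm{\bar p_{h,t}}+\norm{\bar p_t})\norm{\vartheta}$, and then invoke the pointwise-in-time derivative bounds \eqref{Bpht} and \eqref{Bpt} from Theorems~\ref{phderv} and~\ref{pspt}, which control $\norm{\bar p_{h,t}}+\norm{\bar p_t}$ by $C\Xi$, together with the already-established bound $\norm{\vartheta}^2\le C\int_0^t\norm{\zeta}^2\,d\tau$ from \eqref{mid2}. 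Combining these yields
\beqs
\norm{\s_h-\s}^2\le C\,\Xi\Big(\int_0^t\norm{\zeta(\tau)}^2\,d\tau\Big)^{\frac12}+C\norm{\zeta(t)}^2 ,
\eeqs
and \eqref{sErr1} follows after taking square roots and inserting the projection estimate \eqref{Bzeta} (with $m=r+1$, $\alpha=2$) for $\norm{\zeta}$.

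For the flux estimate \eqref{uErr1} I proceed as in the derivation of \eqref{Bu-uh2a}, but in the $L^2$-norm. Testing the projected error equation \eqref{prj2} with $\zz_h=\rho$ and using Proposition~\ref{Lips} together with Cauchy--Schwarz gives $\norm{\rho}\le C\norm{\s_h-\s}$, as in \eqref{Brho}. Then the triangle inequality $\norm{\uu_h-\uu}\le C\norm{\s_h-\s}+\norm{\varrho}$, the bound \eqref{sErr1} just obtained, and the projection estimate \eqref{Bvarrho} (which yields $\norm{\varrho}\le Ch^{r+1}\norm{\uu}_{r+1}$) together produce \eqref{uErr1}.

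The main obstacle is the pointwise-in-time (rather than time-integrated) control, which is exactly what distinguishes this argument from Theorem~\ref{NC}: because we want the estimate at a fixed time $t$, we cannot integrate away the term $(\bar p_{h,t}-\bar p_t,\vartheta)$, and must instead control the time derivatives $\bar p_{h,t}$ and $\bar p_t$ at that time. This is precisely where the restriction $t_0>0$ and the quantity $\Xi$ enter, since the derivative bounds \eqref{Bpht} and \eqref{Bpt} degenerate as $t_0\to 0$. A secondary point requiring care is the legitimacy of the uniform lower bound on $K$ used in \eqref{Mtn}, which hinges on the $L^\infty$ regularity of $\s$ assumed in the non-degenerate setting.
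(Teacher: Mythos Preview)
Your proposal is correct and follows essentially the same route as the paper's own proof: start from the pointwise-in-time identity derived from \eqref{eqerr} and the $L^2$-projection, apply the non-degenerate coercivity \eqref{Mtn} on the left, bound the $\zeta$-term via the Lipschitz estimate \eqref{Lipchitz} with absorption, control $\norm{\bar p_{h,t}}+\norm{\bar p_t}$ by $C\Xi$ using \eqref{Bpht} and \eqref{Bpt}, bound $\norm{\vartheta}$ via \eqref{mid2}, and then deduce \eqref{uErr1} from \eqref{Brho} and the projection estimate \eqref{Bvarrho}. Your commentary on why $t_0>0$ and the $L^\infty$ assumption on $\s$ are needed is accurate and matches the paper's implicit use of these hypotheses.
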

%-------------------------------------------
\begin{proof}
Thank to \eqref{Mtn}, \eqref{eqerr} and $L^2$-projection,  
\beqs
\begin{split}
C\norm{\s_h - \s}^2 &\le \left( K(|\s_h| )\s_h - K(|\s|) \s ,  \s_h - \s \right)\\
& =-(\vartheta_t, \vartheta)+\left( K(|\s_h| )\s_h - K(|\s|) \s , \zeta \right)\\
& =-(\bar p_{h,t}- \bar p_t, \vartheta)+\left( K(|\s_h| )\s_h - K(|\s|) \s , \zeta \right)\\
&\le C (\norm{\bar p_{h,t}} +\norm{\bar p_t} )\norm{\vartheta} + C\norm {  \zeta}^2.
\end{split}
\eeqs
% Using \eqref{mid} yields
%\beq
%\norm{\s_h - \s}^2\le  C (\norm{\bar p_{h,t}} +\norm{\bar p_t} )\norm{\vartheta} + C\norm {  \zeta}^2.
%\eeq    
According to \eqref{Bpht} and \eqref{Bpt}, 
$\norm{\bar p_{h,t}} +\norm{\bar p_t}\le C\Xi$. This and \eqref{mid2} give     
\beq\label{s-sh}
\norm{\s_h - \s}^2\le C\Xi\left(\int_0^t  \norm{ \zeta}^2  d\tau\right)^{\frac 12} + C\norm {\zeta}^2.
\eeq
 Hence \eqref{sErr1} holds.
 
 We have 
 $$
 \norm{\uu_h - \uu}\le \norm{\rho}+\norm{\varrho}.  
 $$
  Using \eqref{Brho} and \eqref{s-sh}, 
 \beqs
 \norm{\rho} \le C \norm{ \s_h - \s}\le C\Xi^{\frac 12}\left(\int_0^t  \norm{ \zeta}^2  d\tau\right)^{\frac 14} + C\norm {\zeta}.
 \eeqs
 This leads to      
 \beq\label{uuhdiff}
 \norm{\uu_h - \uu}\le C\Xi^{\frac 12}\left(\int_0^t  \norm{ \zeta}^2  d\tau\right)^{\frac 14} + C\norm {\zeta}+ \norm {\varrho}.
 \eeq
 Combining \eqref{uuhdiff}, \eqref{Bzeta} and \eqref{Bvarrho}, we obtain \eqref{uErr1}.
 \end{proof}     
 %======================= 
\subsection{Error analysis for fully discrete scheme} 
In this subsection, we present some convergence results to the fully discrete scheme for the degenerate case and super convergence for the nondegenerate case.  

Let
$\bar p^n(\cdot) = \bar p(\cdot,t_n)$, $\vv^n(\cdot) = \vv(\cdot,t_n)$ and $\uu^n(\cdot) = \uu(\cdot,t_n)$ be the
true solution evaluated at the discrete time levels.  We will also
denote $\pi p^n \in W_h$, $\pi \s^n \in \tilde W_h$ and $\Pi \uu^n \in V_h$ to be the projections
of the true solutions at the discrete time levels.  
%===========================

We rewrite \eqref{weakform} with $t=t_n$. Using the definitions of projections and assumption that $\nabla\cdot V_h \subset W_h $, standard
manipulations show that the true solution satisfies the discrete equation
\begin{subequations}\label{fulprjsys}
\begin{align}
\label{fulprj1}& \left( \frac{\pi\bar p^n - \pi\bar p^{n-1}}{\Delta t }, w_h\right)  +  \left(\nabla\cdot \Pi\uu^n, w_h\right) =(f^n-\Psi_t^n, w_h) +(\epsilon^n,w_h),\\
\label{fulprj2}& (\Pi\uu^n, \zz_h)  + ( K(|\s^n| )\s^n ,\zz_h)=0, \\
 \label{fulprj3} &(\pi\s^n,\vv_h)  + ( \pi \bar p^n , \nabla\cdot  \vv_h )=(\nabla \Psi^n, \vv_h ),
\end{align}
\end{subequations}  
where $\epsilon^n$ is the time truncation error of order $\Delta t$. 
%==================================
\begin{theorem}\label{fulErr} Assume $(\bar p^0,\uu^0,\s^0 )\in W\times V\times \tilde W$ and $(\bar p_h^0,\uu^0_h,\s_h^0 )\in W_h\times V_h\times \tilde W_h$. Let $(p, \uu,\s)$ solve problem \eqref{weakform} and $(p_h^n, \uu_h^n,\s_h^n)$ solve the fully  discrete mixed finite element approximation \eqref{fullydiscreteform} for each time step $n$, $n=1\ldots, N$.  
There exists a positive constant $C$ independent of $h$ and $\Delta t$  such that if  the $\Delta t$ is sufficiently small then   
\beq\label{fulerr1}
\begin{split}
\norm{ \bar p^m_h- \bar p^m}\le C\left(\Upsilon\sum_{n=1}^m\Delta t \norm{\zeta^n}^2_{L^{\beta}(\Omega)}
\right)^{\frac 12}
+ C\norm{\theta^m}+C\Delta t
\end{split}
\eeq
for all $m=1,\dots, N.$

Consequently,  if $p^n,\Psi^n\in H^{r+1}(\Omega)$  and  $\s^n\in \left(W^{r+1,\beta}(\Omega)\right)^d$ for $n=1,\dots, N$ then for $m$ between $1$ and $N$, 
\beq\label{fulerr2}
\norm{  p_h^m -  p^m} \le C(h^{r+1}+\Delta t). 
\eeq
\end{theorem}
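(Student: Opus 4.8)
The plan is to run the semidiscrete energy argument of Theorem~\ref{mainres} at the discrete time levels, replacing $\frac{d}{dt}\norm{\vartheta}^2$ by the backward difference and carrying the consistency error $\epsilon^n$ through to the end. First I would subtract the fully discrete system \eqref{fullydiscreteform} from the identities \eqref{fulprjsys} that the true solution satisfies, and introduce the projection splittings $\vartheta^n=\bar p_h^n-\pi\bar p^n$, $\eta^n=\s_h^n-\pi\s^n$, $\rho^n=\uu_h^n-\Pi\uu^n$ (with $\theta^n,\zeta^n,\varrho^n$ the corresponding projection errors). Testing the three difference equations with $w_h=\vartheta^n$, $\zz_h=\eta^n$, $\vv_h=\rho^n$ and forming the combination (first)$+$(second)$-$(third), the $L^2$- and $H(\mathrm{div})$-projection properties annihilate the cross terms $(\nabla\cdot\rho^n,\vartheta^n)$ and $(\rho^n,\eta^n)$ exactly as in \eqref{eqprj}, leaving the discrete analogue of \eqref{eqerr},
\beqs
\left(\tfrac{\vartheta^n-\vartheta^{n-1}}{\Delta t},\vartheta^n\right)+\left(K(|\s_h^n|)\s_h^n-K(|\s^n|)\s^n,\s_h^n-\s^n\right)=\left(K(|\s_h^n|)\s_h^n-K(|\s^n|)\s^n,\zeta^n\right)-(\epsilon^n,\vartheta^n).
\eeqs

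Next I would estimate each term. For the discrete time term I use the one-sided identity $(\vartheta^n-\vartheta^{n-1},\vartheta^n)\ge\tfrac12(\norm{\vartheta^n}^2-\norm{\vartheta^{n-1}}^2)$; for the nonlinear term on the left I apply the monotonicity \eqref{Mono}, producing the nonnegative quantity $C\omega^n\norm{\s_h^n-\s^n}_{L^\beta(\Omega)}^2$ with $\omega^n$ as in \eqref{omegadef}. On the right I bound the nonlinear projection term as in \eqref{mi1a}, then use Young's inequality to split it into a small multiple of the monotone quantity $\omega^n\norm{\s_h^n-\s^n}_{L^\beta}^2$, which is absorbed by the left-hand side, plus a remainder controlled via $\omega^{-1}\le C\Upsilon^{\gamma}$ from \eqref{invOme} and the uniform bound \eqref{Bomega} by $C\Upsilon\norm{\zeta^n}_{L^\beta}^2$; the consistency term I bound by $|(\epsilon^n,\vartheta^n)|\le\norm{\epsilon^n}\norm{\vartheta^n}$, where the backward-Euler truncation estimate $\norm{\epsilon^n}\le C\Delta t$ comes from a temporal Taylor expansion of $\bar p$ in \eqref{fulprj1} (which is where the extra time-regularity behind Theorem~\ref{phderv} is used). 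Multiplying by $2\Delta t$, discarding the monotone term, splitting $\norm{\epsilon^n}\norm{\vartheta^n}$ by Young, and summing from $n=1$ to $m$ with the initial choice $\vartheta^0=\bar p_h^0-\pi\bar p^0=0$ yields
\beqs
\norm{\vartheta^m}^2\le C\Upsilon\sum_{n=1}^m\Delta t\,\norm{\zeta^n}_{L^\beta(\Omega)}^2+C\sum_{n=1}^m\Delta t\,\norm{\epsilon^n}^2+\sum_{n=1}^m\Delta t\,\norm{\vartheta^n}^2.
\eeqs

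I would then close the estimate with the discrete Gronwall lemma: since $\sum_{n=1}^m\Delta t\,\norm{\epsilon^n}^2\le CT\Delta t^2$, and since for $\Delta t$ small the term $\Delta t\norm{\vartheta^m}^2$ can be moved to the left, Gronwall gives $\norm{\vartheta^m}^2\le C\Upsilon\sum_{n=1}^m\Delta t\norm{\zeta^n}_{L^\beta}^2+C\Delta t^2$ with constant independent of $h$; writing $\bar p_h^m-\bar p^m=\vartheta^m+\theta^m$ and taking square roots produces \eqref{fulerr1}. The consequence \eqref{fulerr2} follows by inserting the approximation bounds \eqref{Btheta} and \eqref{Bzeta}, namely $\norm{\theta^m}\le Ch^{r+1}\norm{\bar p^m}_{r+1}$ and $\norm{\zeta^n}_{L^\beta}\le Ch^{r+1}\norm{\s^n}_{r+1,\beta}$, together with $\sum_{n=1}^m\Delta t\norm{\s^n}_{r+1,\beta}^2\le CT\sup_t\norm{\s(t)}_{r+1,\beta}^2$ and the identity $p_h^m-p^m=\vartheta^m+\theta^m+(\pi\Psi^m-\Psi^m)$ from \eqref{trans1}.

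I expect the main obstacle to be the simultaneous control of the nonlinearity and the time stepping. Unlike the continuous case there is no exact $\tfrac12\tfrac{d}{dt}\int_\Omega H(\s)$ structure to exploit, so the degenerate monotone term must be kept only as a nonnegative $L^\beta$-quantity while the consistency error $\epsilon^n$ is propagated; the estimate closes solely through a discrete Gronwall argument that is valid only for $\Delta t$ sufficiently small. The two delicate points are therefore the rigorous verification that $\norm{\epsilon^n}\le C\Delta t$ (which relies on the time regularity of $\bar p$ from Theorem~\ref{phderv}) and the confirmation that the Gronwall constant, and the absorption of the $\Delta t\norm{\vartheta^n}^2$ terms, remain independent of $h$ and $\Delta t$.
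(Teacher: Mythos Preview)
Your proposal is correct and follows essentially the same route as the paper: subtract the projected system \eqref{fulprjsys} from \eqref{fullydiscreteform}, test with $(\vartheta^n,\eta^n,\rho^n)$, use the monotonicity \eqref{Mono} on the left and bound the $\zeta^n$- and $\epsilon^n$-terms on the right, then sum and close with the discrete Gronwall lemma (valid for $\Delta t$ small). The only cosmetic difference is that you propose to absorb part of the $(K(|\s_h^n|)\s_h^n-K(|\s^n|)\s^n,\zeta^n)$ term into the monotone quantity, whereas the paper bounds it directly via \eqref{mi1a} and \eqref{Bomega} and then drops the monotone term altogether; either variant yields the same pre-Gronwall inequality.
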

\begin{proof} 
Subtract \eqref{fulprjsys} from \eqref{fullydiscreteform}, in the resultants using $w_h= \vartheta^n, \zz_h = \eta^n,\vv_h= \rho^n$  we obtain the error equations: 
\begin{subequations}
\begin{align}
&\label{b1} \left( \frac{\vartheta^n- \vartheta^{n-1}}{\Delta t}, \vartheta^n \right) +  \left(\nabla\cdot \rho^n, \vartheta^n\right) =(\epsilon^n,\vartheta^n), \\
&\label{b2} (\rho^n, \eta^n )  + \left( K(|\s_h^n| )\s_h^n -K(|\s^n|) \s^n ,  \eta^n \right)=0,\\ 
&\label{b3} (\eta^n,\rho^n)  + ( \vartheta^n , \nabla\cdot \rho^n )= 0.
\end{align}
\end{subequations} 
Combining \eqref{b1}--\eqref{b3} gives 
\beqs
\norm{ \vartheta^n}^2 + \Delta t \left( K(|\s_h^n| )\s_h^n -K(|\s^n|) \s^n ,  \eta^n \right) = ( \vartheta^n, \vartheta^{n-1}  )+\Delta t(\epsilon^n,\vartheta^n).
\eeqs
We rewrite this equation as form
\beq\label{c11}
\begin{aligned}
&\norm{ \vartheta^n}^2 + \Delta t \left( K(|\s_h^n| )\s_h^n -K(|\s^n|) \s^n ,  \s_h^n-\s^n \right) \\
&\quad\quad= (\vartheta^n, \vartheta^{n-1}  )+ \Delta t\Big\{ \left( K(|\s_h^n| )\s_h^n -K(|\s^n|) \s^n ,  \zeta^n \right)+(\epsilon^n,\vartheta^n)\Big\}.
\end{aligned}
\eeq
The second term of \eqref{c11}, using \eqref{Mono}, gives 
\beq\label{Wssh}
\left( K(|\s_h^n| )\s_h^n -K(|\s^n|) \s^n ,  \s_h^n-\s^n \right) \ge C\omega^n  \norm{\s_h^n -\s^n}^2_{L^{\beta}(\Omega)}.
\eeq
where $\omega^n =\omega (t_n)$ defined as in \eqref{omegadef}. 

The right hand side of \eqref{c11}, using Young's inequality, \eqref{mi1a}, \eqref{keya} and \eqref{Bomega}, gives   
\begin{multline} \label{RHSc11}
(\vartheta^n, \vartheta^{n-1}  )+ \Delta t\left( \left( K(|\s_h^n| )\s_h^n -K(|\s^n|) \s^n ,  \zeta^n \right)+(\epsilon^n,\vartheta^n)\right)\\
\le \frac 12 \left(\norm{\vartheta^n}^2 +\norm{\vartheta^{n-1}}^2   \right)+
\Delta t\Big\{C\Upsilon\norm{\zeta^n}^2_{L^{\beta}(\Omega)} +\frac12\left(\norm{\vartheta^n}^2 +\norm{\epsilon^n}^2\right) \Big\}.
\end{multline}
From \eqref{c11}, \eqref{Wssh} and \eqref{RHSc11}, we obtain  
\begin{align*}
\norm{ \vartheta^n}^2 - \norm{\vartheta^{n-1}}^2&+ C\Delta t\omega^n \norm{\s_h^n-\s^n}^2_{L^{\beta}(\Omega)} \\
&\le\Delta t \norm{ \vartheta^n}^2 + C \Delta t \Big(\Upsilon \norm{\zeta^n}^2_{L^{\beta}(\Omega)}+\norm{\epsilon^n}^2\Big). 
\end{align*}
Summing  over $n$ gives
\begin{align*}
(1-\Delta t)\norm{ \vartheta^m}^2 &+ C \sum_{n=1}^m \Delta t  \omega^n\norm{\s_h^n-\s^n}^2_{L^{\beta}(\Omega)}\\
 &\le\sum_{n=1}^{m-1} \Delta t \norm{ \vartheta^n}^2  
+ C \sum_{n=1}^m \Delta t \Big(\Upsilon  \norm{\zeta^n}^2_{L^{\beta}(\Omega)}+\norm{\epsilon^n}^2\Big). 
\end{align*}
By discrete Gronwall's inequality,  
 \beqs
\norm{\vartheta^m}^2 + C\sum_{n=1}^m\Delta t \omega^n\norm{\s_h^n-\s^n}^2_{L^{\beta}(\Omega)}
\le C\sum_{n=1}^m\Delta t \Big(\Upsilon \norm{\zeta^n}^2_{L^{\beta}(\Omega)}+\norm{\epsilon^n}^2\Big). 
\eeqs
Therefore
 \begin{align*}
 \norm{ \bar p^m_h- \bar p^m}^2 &+  \sum_{n=1}^m\Delta t \omega^n\norm{\s_h^n-\s^n}^2_{L^{\beta}(\Omega)}\\
  &\le C\Upsilon\sum_{n=1}^m\Delta t  
\norm{\zeta^n}^2_{L^{\beta}(\Omega)}
+ \norm{\theta^m}^2+C(\Delta t)^2. 
\end{align*}
This implies \eqref{fulerr1}. 

From \eqref{fulerr1} and triangle inequality we find that 
\beq\label{as}
\norm{ p^m_h-  p^m}\le C\left(\Upsilon\sum_{n=1}^m\Delta t \norm{\zeta^n}^2_{L^{\beta}(\Omega)}
\right)^{\frac 12}
+ C\norm{\theta^m}+C\Delta t +\norm{\pi\Psi^m-\Psi^m}.
\eeq
The project properties and \eqref{as} imply \eqref{fulerr2}.
\end{proof}
%=====================
\begin{theorem}\label{Derr}
Under assumptions of Theorem \ref{fulErr}. If $\s^n\in (W^{r+1,2}(\Omega))^d$ for $n=1,\dots, N$ then there is positive constant $C$ independent of $h$ and time step such that if $\Delta t$ sufficiently small then for   $m$ between $1$ and $N$,  
\beq\label{Dsuh}
\norm{\s_h^m - \s^m}_{L^{\beta}(\Omega)}+\norm{\uu_h^m - \uu^m}_{L^{\beta}(\Omega)}\le C( h^{\frac{r+1}2}+ \sqrt{\Delta t} ) .
\eeq
\end{theorem}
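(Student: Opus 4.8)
The plan is to transcribe the semidiscrete argument of Theorem~\ref{suerr} to a single discrete time level $m$, paying the extra price of the backward-Euler truncation term. First I would take the error equations \eqref{b1}--\eqref{b3} at the level $n=m$ (already available from the proof of Theorem~\ref{fulErr}) and combine them via the $L^2$- and $H(\mathrm{div})$-projections, exactly as there, to reach the single identity
\beqs
\left(\frac{\vartheta^m-\vartheta^{m-1}}{\Delta t},\vartheta^m\right)+\left(K(|\s_h^m|)\s_h^m-K(|\s^m|)\s^m,\s_h^m-\s^m\right)=\left(K(|\s_h^m|)\s_h^m-K(|\s^m|)\s^m,\zeta^m\right)+(\epsilon^m,\vartheta^m).
\eeqs
Then I would bound the second term on the left from below by the monotonicity inequality \eqref{Mono}, which produces $C\omega^m\norm{\s_h^m-\s^m}_{L^\beta(\Omega)}^2$, and transfer the discrete time-difference term to the right-hand side.

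Next I would estimate the right-hand side. The $\zeta^m$ term is handled verbatim as in \eqref{mi1a}--\eqref{Bomega}, giving a bound $C\Upsilon\norm{\zeta^m}_{L^\beta(\Omega)}$, while the truncation term satisfies $(\epsilon^m,\vartheta^m)\le C\Delta t\,\norm{\vartheta^m}$ since $\norm{\epsilon^m}\le C\Delta t$. For the transferred time-difference term I would use the $L^2$-projection to write
\beqs
\left(\frac{\vartheta^m-\vartheta^{m-1}}{\Delta t},\vartheta^m\right)=\left(\frac{\bar p_h^m-\bar p_h^{m-1}}{\Delta t}-\frac{\bar p^m-\bar p^{m-1}}{\Delta t},\vartheta^m\right)
\eeqs
and bound it by $\bigl(\norm{(\bar p_h^m-\bar p_h^{m-1})/\Delta t}+\norm{(\bar p^m-\bar p^{m-1})/\Delta t}\bigr)\norm{\vartheta^m}$. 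The true-solution difference quotient is controlled by $\sup_{t}\norm{\bar p_t}$, hence by $C\Xi^{1/2}$ through Theorem~\ref{phderv}.

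Dividing by $\omega^m$ and using $(\omega^m)^{-1}\le C\Upsilon^\gamma$ from \eqref{invOme}, I would then insert the $L^2$-bound $\norm{\vartheta^m}^2\le C\Upsilon\sum_{n=1}^m\Delta t\,\norm{\zeta^n}_{L^\beta(\Omega)}^2+C(\Delta t)^2$ already produced by the discrete Gronwall step inside the proof of Theorem~\ref{fulErr}. Since \eqref{Bzeta} gives $\norm{\zeta^n}_{L^\beta(\Omega)}\le Ch^{r+1}\norm{\s^n}_{r+1,\beta}$ with these norms uniformly bounded in $n$ under the stated regularity, both $\norm{\vartheta^m}$ and $\norm{\zeta^m}_{L^\beta(\Omega)}$ are $O(h^{r+1}+\Delta t)$; hence $\norm{\s_h^m-\s^m}_{L^\beta(\Omega)}^2=O(h^{r+1}+\Delta t)$, and taking square roots (using $\sqrt{a+b}\le\sqrt a+\sqrt b$) gives the $\s$-part of \eqref{Dsuh}. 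The flux estimate follows exactly as in the derivation of \eqref{Bro}: testing \eqref{b2} as there and invoking Proposition~\ref{Lips} with H\"older's inequality yields $\norm{\rho^m}_{L^\beta(\Omega)}\le C\norm{\s_h^m-\s^m}_{L^\beta(\Omega)}$, and the triangle inequality together with \eqref{Bvarrho} then completes \eqref{Dsuh}.

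The main obstacle is the discrete time-difference term. In the semidiscrete proof the factor $\norm{\bar p_{h,t}}$ was controlled by Theorem~\ref{pspt}(iii), but here I need an a priori bound on the discrete difference quotient $\norm{(\bar p_h^m-\bar p_h^{m-1})/\Delta t}$, uniform in $h$ and $\Delta t$. This has no counterpart yet in the text and must be established separately: I would difference the scheme \eqref{fullydiscreteform} between consecutive time levels, test with the resulting difference quotient, use \eqref{i:ineq3} to absorb the term coming from the derivative of $K$ exactly as in Theorem~\ref{phderv}, and close with a discrete Gronwall inequality. It is precisely this discrete Gronwall step that forces the hypothesis that $\Delta t$ be sufficiently small; once the discrete time-derivative bound is in hand, the remainder is a routine transcription of the semidiscrete argument.
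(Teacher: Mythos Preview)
Your overall strategy coincides with the paper's: isolate the error identity at the single level $m$, invoke the monotonicity bound \eqref{Mono} to produce $\omega^m\norm{\s_h^m-\s^m}_{L^\beta}^2$ on the left, estimate the $\zeta^m$ term as in \eqref{mi1a}--\eqref{Bomega}, insert the bound on $\norm{\vartheta^m}$ coming from the discrete Gronwall step of Theorem~\ref{fulErr}, divide by $\omega^m$ via \eqref{invOme}, and then pass to the flux by the Lipschitz argument leading to \eqref{Bro}. The orders of convergence you arrive at match.

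The one substantive difference is your treatment of the discrete time-difference term. You correctly observe that a uniform bound on $(\Delta t)^{-1}\norm{\bar p_h^m-\bar p_h^{m-1}}$ for the \emph{fully discrete} iterates has not been established in the text, and you propose to obtain it by differencing the scheme \eqref{fullydiscreteform} in $n$ and mimicking Theorem~\ref{phderv} with a discrete Gronwall argument. The paper, by contrast, simply writes $(\Delta t)^{-1}\norm{\bar p_h^m-\bar p_h^{m-1}}=(\Delta t)^{-1}\norm{\int_{t_{m-1}}^{t_m}\bar p_{h,t}\,dt}$ and appeals to the semidiscrete estimate \eqref{Bpt}; taken literally this identifies the fully discrete $\bar p_h^n$ with the semidiscrete $\bar p_h(t_n)$, which is not justified. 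Your route is the more honest one, and the differenced-scheme argument you sketch (using \eqref{i:ineq3} to absorb the $K'$ term and closing with discrete Gronwall, which is where the small-$\Delta t$ hypothesis enters) does produce the needed bound. So your proposal is sound and in fact closes a gap that the paper glosses over.
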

%======================
\begin{proof}  
Recall that the true solution satisfies the discrete equations 
\begin{subequations}\label{tem1}
\begin{align}
\label{te1}& \left( p_t^n, w_h\right)  +  \left(\nabla\cdot \Pi\uu^n, w_h\right) =(f^n, w_h)  , &&\forall w_h\in W_h\\
\label{te2}& (\Pi\uu^n, \zz_h)  + ( K(|\s^n| )\s^n ,\zz_h)=0, &&\forall \zz_h\in \tilde W_h,\\
 \label{te3} &(\pi\s^n,\vv_h)  + ( \pi p^n , \nabla\cdot  \vv_h )=0, &&\forall \vv_h\in V_h,
\end{align}
\end{subequations} 
Subtracting \eqref{fullydiscreteform} from \eqref{tem1}, 
choosing $w_h=\vartheta^n$, $\zz_h =\eta^n$, $\vv_h =\rho^n$, we obtain  
\begin{subequations}\label{tem4}
\begin{align}
&\label{te41}    \left( \frac{ p_h^n -  p_h^{n-1}}{\Delta t } -p_t^n, \vartheta^n\right) + \left(\nabla \cdot \rho^n, \vartheta^n\right) =0, \\
&\label{te42}     (\rho^n, \eta^n)  + ( K(|\s_h^n| )\s_h^n - K(|\s^n| )\s^n ,\eta^n )=0,\\
 &\label{te43}     (\eta^n,\rho^n)  + ( \vartheta^n , \nabla\cdot \rho^n )=0.
\end{align}
\end{subequations} 
Above equations yield   
\beq\label{erq}
\left( \frac{ p_h^n -  p_h^{n-1}}{\Delta t } -p_t^n, \vartheta^n\right) + \left( K(|\s_h^n| )\s_h^n - K(|\s^n| )\s^n ,\eta^n \right)=0.
\eeq
We use \eqref{Mn1}, \eqref{erq} to find that     
\beq\label{pior-est}
\begin{split}
\omega^n\norm{\s^n_h - \s^n}^2_{L^{\beta}(\Omega)} &\le \left( K(|\s^n_h| )\s^n_h - K(|\s^n|) \s^n ,  \s^n_h - \s^n \right)\\
&=\left( K(|\s^n_h| )\s^n_h - K(|\s^n|) \s^n ,  \eta^n \right)+ \left( K(|\s^n_h| )\s^n_h - K(|\s^n|) \s^n ,   \zeta^n \right)\\
& =-\left(\frac{p_{h}^n - p_h^{n-1}}{\Delta t} - p_t^n, \vartheta^n\right)+\left( K(|\s^n_h| )\s^n_h - K(|\s^n|) \s^n , \zeta^n \right).
\end{split}
\eeq
Due to \eqref{mi1a}, Cauchy-Schwartz and triangle inequality, one has  
\beqs
\omega^n\norm{\s^n_h - \s^n}^2_{L^{\beta}(\Omega)}\le C \left((\Delta t)^{-1} \norm{ p_{h}^n - p_h^{n-1}} +\norm{p_t^n} \right)\norm{\vartheta^n} + \Upsilon \norm {  \zeta^n}_{L^{\beta}(\Omega)}. 
\eeqs
Since 
\begin{align*}
(\Delta t)^{-1}\norm{\bar p_{h}^m - p_h^{m-1}} &=(\Delta t)^{-1}\norm{ \int_{t_{m-1}}^{t_m} p_{h,t} dt}\\
& \le (\Delta t)^{-1} \int_{t_{m-1}}^{t_m} \norm{\bar p_{h,t}} dt\\
&\le \sup_{[T/N,T]}\norm{\bar p_{h,t}|}\le \Xi, 
\end{align*}
and also note that $ \norm{\bar p_t^m} \le \sup_{[T/N,T]}\norm{\bar p_t}\le \Xi $, we have 
\beqs
\begin{split}
\omega^n\norm{\s_h^m - \s^m}^2_{L^{\beta}(\Omega)}
&\le C \Xi \norm{\vartheta^m} + \Upsilon \norm {  \zeta^m}^2_{L^{\beta}(\Omega)}\\
&\le C \Xi \left[\sum_{n=1}^m\Delta t \left(\Upsilon \norm{\zeta^n}^2_{L^{\beta}(\Omega)}+\norm{\epsilon^n}^2\right)\right]^{\frac 12} + \Upsilon \norm {\zeta^m}^2_{L^{\beta}(\Omega)}.
\end{split}
\eeqs
Using \eqref{invOme}, we obtain 
\beqs%\label{fDsh}
\norm{\s_h^m - \s^m}^2_{L^{\beta}(\Omega)}\le C \Upsilon^\gamma \Xi \left[ \left(\Upsilon\sum_{n=1}^m\Delta t  \norm{\zeta^n}^2_{L^{\beta}(\Omega)}\right)^{\frac 12}+\Delta t    \right] + C \Upsilon^{\gamma+1} \norm {\zeta^m}^2_{L^{\beta}(\Omega)}.
\eeqs
It follows that %from \eqref{fDsh} and \eqref{Bzeta} that
\beq\label{Dsh}
\begin{split}
\norm{\s_h^m - \s^m}_{L^{\beta}(\Omega)}&\le C\Upsilon^{\frac{2\gamma+1}{4} }\Xi^{\frac 1 2}h^{\frac{r+1}2}\left( \sum_{n=1}^m  \Delta t\norm{\s^n}^2_{r+1,\beta}d\tau\right)^{\frac12}\\
& \quad+ C\Upsilon^{\frac {\gamma+1}2 } h^{r+1}\norm {\s^m}_{r+1,\beta}
+ C \Xi\Upsilon^\gamma(\Delta t)^{\frac 12}\\
&\le C(h^{\frac{r+1}2 }+\sqrt{\Delta t} ).
\end{split}
\eeq

Now we subtract  \eqref{fulprj2} from \eqref{fully2}, use $\zz_h =(\rho^m) ^{\beta-1} $, we have   
 \beqs
 \left(\rho^m, (\rho^m)^{\beta-1}\right) + \left( K(|\s_h^m| )\s_h^m -K(|\s^m|) \s^m ,  (\rho^m)^{\beta-1} \right)=0.
 \eeqs 
In a similar way as in Theorem \ref{suerr} we find that   
 \beqs
 \norm{ \rho^m}_{L^{\beta}(\Omega)} \le C\norm{\s^m -\s_h^m}_{L^{\beta}(\Omega)}.
 \eeqs
 which implies
 \beq\label{Duh}
\norm{\uu_h^m -\uu^m}_{L^{\beta}(\Omega)}\le  C( \norm{\s^m-\s_h^m}_{L^{\beta}(\Omega)} +\norm {\varrho^m}_{L^{\beta}(\Omega)})\le C(h^{\frac{r+1}2 }+\sqrt{\Delta t} ). 
\eeq
 Thus \eqref{Dsh} and \eqref{Duh} lead to \eqref{Dsuh}. The proof is complete. 
\end{proof}
Finally we derives $L^2$-estimates for  $p_h^m - p^m,$ $\s_h^m - \s^m$ and $\uu_h^m - \uu^m $ in the nondegenerate case.  As results of Theorem~\ref{NC} and   \ref{Bsu} we can obtain the following error estimates   
\begin{theorem}\label{fullyErr} Suppose $(\bar p^0,\uu^0,\s^0 )\in W\times V\times \tilde W$ and $(\bar p_h^0,\uu^0_h,\s_h^0 )\in W_h\times V_h\times \tilde W_h$. Let $(p, \uu,\s)$ solve problem \eqref{weakform} and $(p_h^n, \uu_h^n,\s_h^n)$ solve the fully  discrete mixed finite element approximation \eqref{fullydiscreteform} for each time step $n$.  Assume $p^n, \Psi^n \in H^{r+1}(\Omega)$ and  $\s^n\in (L^\infty(\Omega)\cap H^{r+1}(\Omega))^d$, for $n=1,\dots, N$. There exists a positive constant $C$ independent of $h$ such that if $\Delta t$ sufficiently small then for   $m$ between $1$ and $N$,  

{\rm (i)}
\beq\label{fulre2}
\begin{aligned}
\norm{  p_h^m - p^m} &+ \left(\sum_{n=1}^m \Delta t \norm{\s_h^n -\s^n}^2\right)^{\frac 12}\\
& +\left(\sum_{n=1}^m\Delta t \norm{\uu_h^n - \uu^n}^2\right)^{\frac12}\le C(h^{r+1}+\Delta t). 
\end{aligned}
\eeq
 
 {\rm (ii)}
\beq
\norm{\s_h^m - \s^m}+\norm{\uu_h^m - \uu^m}\le C( h^{\frac{r+1}2}+ \sqrt{\Delta t} ) .
\eeq
\end{theorem}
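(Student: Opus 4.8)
The plan is to mirror the fully discrete arguments of Theorems~\ref{fulErr} and~\ref{Derr}, but to replace the degenerate (weighted $L^\beta$) monotonicity \eqref{Mono}--\eqref{Mn1} by the non-degenerate estimate \eqref{Mtn}, which delivers a genuine $L^2$-coercivity $\left(K(|\s_h^n|)\s_h^n-K(|\s^n|)\s^n,\s_h^n-\s^n\right)\ge k(1-a)\norm{\s_h^n-\s^n}^2$ at each time level. Since the statement is advertised as a consequence of Theorems~\ref{NC} and~\ref{Bsu}, the work is precisely to transport those non-degenerate semidiscrete bounds into the backward-Euler setting, which trades the fractional powers of $\Upsilon$ appearing in the degenerate fully discrete estimates for clean $L^2$ bounds at the cost of the stronger regularity hypotheses on $\s$.

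For part (i), I would start from the fully discrete error identity in the form \eqref{c11}, obtained by subtracting \eqref{fulprjsys} from \eqref{fullydiscreteform} and testing with $w_h=\vartheta^n$, $\zz_h=\eta^n$, $\vv_h=\rho^n$. Applying \eqref{Mtn} to the monotone term and bounding the $\zeta^n$-term by Proposition~\ref{Lips} and Young's inequality exactly as in \eqref{mid}, namely $\left(K(|\s_h^n|)\s_h^n-K(|\s^n|)\s^n,\zeta^n\right)\le \frac{k(1-a)}2\norm{\s_h^n-\s^n}^2+C\norm{\zeta^n}^2$, together with $(\vartheta^n,\vartheta^{n-1})\le\frac12(\norm{\vartheta^n}^2+\norm{\vartheta^{n-1}}^2)$ and Cauchy's inequality on $(\epsilon^n,\vartheta^n)$, yields $\norm{\vartheta^n}^2-\norm{\vartheta^{n-1}}^2+C\Delta t\norm{\s_h^n-\s^n}^2\le \Delta t\norm{\vartheta^n}^2+C\Delta t(\norm{\zeta^n}^2+\norm{\epsilon^n}^2)$. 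Summing in $n$, invoking the discrete Gronwall inequality for small $\Delta t$, using $\vartheta^0=0$, and recalling that $\epsilon^n$ is of order $\Delta t$ so that $\sum_{n=1}^m\Delta t\norm{\epsilon^n}^2\le C(\Delta t)^2$, gives $\norm{\vartheta^m}^2+\sum_{n=1}^m\Delta t\norm{\s_h^n-\s^n}^2\le C\sum_{n=1}^m\Delta t\norm{\zeta^n}^2+C(\Delta t)^2$. The projection bound \eqref{Bzeta} turns $\norm{\zeta^n}^2$ into $Ch^{2(r+1)}\norm{\s^n}_{r+1}^2$, and the decomposition \eqref{trans1} with \eqref{Btheta} produces the $p$-part and the $\s$-sum in \eqref{fulre2}. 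The flux sum follows by testing the flux error equation with $\zz_h=\rho^n$ to get $\norm{\rho^n}\le C\norm{\s_h^n-\s^n}$, as in \eqref{Brho}, whence $\norm{\uu_h^n-\uu^n}\le C(\norm{\s_h^n-\s^n}+\norm{\varrho^n})$, followed by summing with weights $\Delta t$ and applying \eqref{Bvarrho}.

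For part (ii) I would reuse the pointwise-in-time identity \eqref{pior-est} from the proof of Theorem~\ref{Derr}, but apply the non-degenerate coercivity \eqref{Mtn} in place of the weighted \eqref{Mn1}, so that the factor $\omega^m$ disappears and one gets $C\norm{\s_h^m-\s^m}^2\le -\left(\frac{p_h^m-p_h^{m-1}}{\Delta t}-p_t^m,\vartheta^m\right)+\left(K(|\s_h^m|)\s_h^m-K(|\s^m|)\s^m,\zeta^m\right)$. The second term is again controlled by Proposition~\ref{Lips} and Young; for the first I would bound the backward difference quotient and $p_t^m$ by the time-derivative estimates \eqref{Bpt} and \eqref{Bpht}, exactly as in Theorem~\ref{Derr}, producing a factor $\Xi$. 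Feeding in the already-established bound $\norm{\vartheta^m}\le C(h^{r+1}+\Delta t)$ from part (i) together with $\norm{\zeta^m}\le Ch^{r+1}\norm{\s^m}_{r+1}$ yields $\norm{\s_h^m-\s^m}^2\le C(h^{r+1}+\Delta t)+Ch^{2(r+1)}$, i.e. $\norm{\s_h^m-\s^m}\le C(h^{(r+1)/2}+\sqrt{\Delta t})$. The flux bound then follows from $\norm{\uu_h^m-\uu^m}\le C(\norm{\s_h^m-\s^m}+\norm{\varrho^m})$ and \eqref{Bvarrho}.

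The main obstacle is the backward-difference-quotient term $\frac{p_h^m-p_h^{m-1}}{\Delta t}-p_t^m$ in part (ii): unlike the semidiscrete case there is no genuine discrete time derivative, so one must bound $(\Delta t)^{-1}\norm{p_h^m-p_h^{m-1}}$ uniformly in $m$ and $\Delta t$ by the (discrete analog of the) time-derivative estimate \eqref{Bpt}, and pair it cleanly with $\norm{\vartheta^m}$ rather than with $\norm{\s_h^m-\s^m}$, so that the rate is not degraded. The emergence of the half-power $h^{(r+1)/2}$ in (ii), versus the full $h^{r+1}$ of the integrated bound (i), is the expected signature of this difference-quotient coupling.
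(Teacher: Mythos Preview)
Your proposal is correct and is precisely the argument the paper intends: the paper does not write out a proof of Theorem~\ref{fullyErr} at all, but merely states it as the fully discrete analogue of Theorems~\ref{NC} and~\ref{Bsu}, so the expected proof is exactly what you describe --- rerun the fully discrete error machinery of Theorems~\ref{fulErr} and~\ref{Derr}, replacing the weighted $L^\beta$ monotonicity \eqref{Mn1} by the non-degenerate $L^2$ coercivity \eqref{Mtn} (available thanks to $\s^n\in L^\infty(\Omega)^d$) and the $L^\beta$ bound \eqref{mi1a} by the Lipschitz bound \eqref{Lipchitz}. Your identification of the backward-difference-quotient term as the delicate point in part~(ii), and its resolution via the $\Xi$ bounds on $\bar p_t$ and $\bar p_{h,t}$ exactly as in the proof of Theorem~\ref{Derr}, is also on target.
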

%==============================
\section{Numerical results}\label{num-res}
In this section, we give a simple numerical result illustrating the convergence
theory. We test the convergence of our method with the Forchheimer two term law. 
For simplicity, consider $g(s)=1+s, s\ge 0$. Equation \eqref{eq4} shows that
 $ s= \frac {-1 +\sqrt{1+4\xi}}{2}$ and  
$$
K(\xi) =\frac1{g(s(\xi)) } =\frac {2}{1+\sqrt{1+4\xi}}.
$$ 

Since we analyze a first order time discretization, we consider the lowest order mixed method. Here we use the lowest order Raviart-Thomas mixed finite element on the unit square in two dimensions. Let $x=(x_1,x_2), \Omega=[0,1]^2$. The  analytical solution is chosen by 
\begin{align*}
p(x,t)&=tx_1(1-x_1)x_2(1-x_2), \\
\s(x,t)&=t ((1-2x_1)x_2(1-x_2), x_1(1-x_1)(1-2x_2)    ),\\
 \uu(x,t)&= \frac{2\s(x,t)}{1+\sqrt{1 + 4|\s(x,t)|}} 
\end{align*}
for all $x\in\Omega,t\in [0,1]$. The forcing term $f$ is determined accordingly to the analytical solution by 
$
p_t - \nabla \cdot\uu = f, (x,t)\in \Omega\times[0,1]. 
$ 
The initial data $p(x,0)=0$ and boundary data $p(x,t)=0$ for all $(x,t)\in \partial \Omega \times[0,1].$ 

We divided the unit square into an $ N\times N$ mesh of squares, each then subdivided into two right triangles. For each mesh, we solved the generalized Forchheimer equation numerically. The error control in each nonlinear solve is $\varep =10^{-6}$.  Our problem is solved at each time level start at $t=0$ until final time $t = 1$. At this time, we measured the $L^2$-errors of pressure and $L^{\beta}$-errors of gradient of pressure and flux with $\beta = 2 - a= 2- \frac{{\rm deg}( g)}{{\rm deg} (g) + 1}= \frac 3 2$.   The numerical results are listed as the following table.   
  
  \vspace{0.2cm}
\begin{center}
\begin{tabular}{c|c| c|c| c|c|c}
%\hline
N    & $\norm{p-p_h}$ & Rates & $\norm{\s-\s_h}_{L^{\beta}(\Omega)}$ &  Rates & $\norm{\uu-\uu_h}_{L^{\beta}(\Omega)}$ &  Rates \\
\hline
4& 0.00070985	&-&0.052427&-&0.0492867&-\\
8&  0.000307278&	2.31&	0.0277362&	1.89&	0.0267065	&1.85 \\
16& 0.000142125&	2.16&0.016163&	1.72	 &0.0158135&	1.69 \\
32& 7.44E-05&	1.91&	0.0115456&	1.4&	0.0113782&	1.39\\
64&3.83E-05&	1.94&	0.01003&	1.15&	0.00990809&	1.15\\
128&1.93E-05&	1.99&	0.00959868&	1.04&	0.00948783&	1.04\\
256&9.65E-06&	2.00&	0.00948485&	1.01&	0.00937671&	1.01\\
512& 4.83E-06&	2.00&	0.0094558&	1.00&	0.00934833&	1.00  \\
%\hline
\end{tabular}

\vspace{0.2cm}
Table 1. Convergence study for generalized Forchheimer equation in 2D.

\end{center}

\myclearpage
\appendix\section*{}
We state the parabolic embedding and fast decaying geometry sequences lemmas which are used in our proof of Theorem~\ref{pinf}. Let us denote throughout $Q_T=\Omega\times(0,T)$.
 \begin{lemma}[cf. \cite{HK}]\label{Sob4} 
Assume
$
\mu \ge 2 \quad \text{and}\quad  \mu>\frac{ad}{\beta}.
$
Let 
\beqs\label{expndef}
q=\mu\left(1+\frac{\beta}d\right)-a.
\eeqs
Then
\beqs\label{nonzerobdn}
\|u\|_{L^q(Q_T)}\le C(1+\delta T)^{1/q}[[u]],
\eeqs
where $\delta=1$ in general, $\delta=0$ in case $u$ vanishes on the boundary $\partial \Omega$, and
\beqs\label{udouble}
[[u]]=\max_{[0,T]}\|u(t)\|_{L^\mu(\Omega)}+\left(\int_0^T\int_\Omega |u|^{\mu-2}|\nabla u|^{\beta}dx dt\right)^\frac 1{\mu-a}.
\eeqs
\end{lemma}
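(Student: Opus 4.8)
The plan is to reduce the parabolic bound to a purely spatial Sobolev interpolation applied to a suitable power of $u$, and then integrate in time. First I would introduce the new unknown $w=|u|^s$ with exponent $s=(\mu-a)/\beta\ge 1$; the inequality $s\ge 1$ is exactly the hypothesis $\mu\ge 2$, since $\beta=2-a$. The decisive algebraic identity is $(s-1)\beta=\mu-2$, which yields the pointwise relation $|\nabla w|^\beta=s^\beta|u|^{\mu-2}|\nabla u|^\beta$, so that $\int_0^T\int_\Omega|\nabla w|^\beta\,dx\,dt$ is precisely $s^\beta$ times the gradient quantity appearing in $[[u]]$. Moreover $\norm{w(t)}_{L^m(\Omega)}=\norm{u(t)}_{L^\mu(\Omega)}^s$ for $m=\mu/s=\mu\beta/(\mu-a)$, so the two ingredients of $[[u]]$ become the two natural norms of $w$.

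Next I would apply, for almost every $t$, the Gagliardo–Nirenberg interpolation of $\norm{w(t)}_{L^r}$ between $\norm{w(t)}_{L^m}$ and $\norm{w(t)}_{L^{\beta^*}}$, where $\beta^*=d\beta/(d-\beta)$ and the embedding $W^{1,\beta}\hookrightarrow L^{\beta^*}$ is available because $\beta=2-a\in(1,2)$ while $d\ge 2$, whence $\beta<d$. I would fix the interpolation weight $\theta$ by the requirement $\theta r=\beta$; unwinding this forces $r=\beta(1+m/d)$ and, crucially, $sr=q$ together with $\theta=(\mu-a)/q$ for the prescribed value of $q$. The hypothesis $\mu>ad/\beta$ enters here: it is equivalent to $m<\beta^*$, so that $r$ lies strictly between $m$ and $\beta^*$ and the interpolation is genuine with $\theta\in(0,1)$. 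Raising to the power $r$, pulling $\max_{[0,T]}\norm{w(t)}_{L^m}$ out of the time integral, and integrating then bounds $\int_0^T\norm{w(t)}_{L^r}^r\,dt=\norm{u}_{L^q(Q_T)}^q$ by $(\max_t\norm{w}_{L^m})^{(1-\theta)r}\int_0^T\norm{w(t)}_{W^{1,\beta}}^\beta\,dt$.

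Finally I would take the $q$-th root and reorganise. Because $sr=q$, the exponent on $\max_t\norm{u}_{L^\mu}$ collapses to $1-\theta$, while the gradient integral appears to the power $1/q=\theta/(\mu-a)$, that is as $Y^\theta$ with $Y=\big(\int_0^T\int_\Omega|u|^{\mu-2}|\nabla u|^\beta\big)^{1/(\mu-a)}$; a weighted arithmetic–geometric mean (Young) inequality with weights $1-\theta$ and $\theta$ then converts the product $X^{1-\theta}Y^\theta$ into the sum $X+Y=[[u]]$. In the case $\delta=0$ ($u$ vanishing on $\partial\Omega$) the Sobolev step is replaced by the Poincaré–Sobolev inequality $\norm{w}_{L^{\beta^*}}\le C\norm{\nabla w}_{L^\beta}$, so no lower-order term and no $T$ factor arise. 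For $\delta=1$ the extra term $\int_0^T\norm{w(t)}_{L^\beta}^\beta\,dt$ coming from the full $W^{1,\beta}$ norm is estimated by Hölder's inequality (using $\beta<m$) as $CT(\max_t\norm{u}_{L^\mu})^{\mu-a}$, and carrying it through produces the factor $(1+T)^{1/q}$, once one uses $s\beta=\mu-a$ and $(1-\theta)+(\mu-a)/q=1$. The main obstacle is not any single estimate but the exponent bookkeeping: one must verify that the single prescribed $q$ simultaneously realises $sr=q$, $\theta r=\beta$ and $\theta=(\mu-a)/q$, and that the degree-one homogeneity of both sides is respected, so that the interpolation product genuinely collapses to the sum $[[u]]$ rather than to some other combination.
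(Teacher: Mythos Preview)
The paper does not actually prove this lemma: it is stated in the appendix with a citation to \cite{HK} and then used as a black box in the proof of Theorem~\ref{pinf}. So there is no in-paper argument to compare against.

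Your proposal is correct and is the standard route to such parabolic embeddings (cf.\ \cite{LadyParaBook68}). The substitution $w=|u|^{s}$ with $s=(\mu-a)/\beta$ is precisely the device that converts the mixed quantity $\int |u|^{\mu-2}|\nabla u|^{\beta}$ into $\|\nabla w\|_{L^\beta}^{\beta}$ and $\|u\|_{L^\mu}$ into $\|w\|_{L^m}$ with $m=\mu/s$; the exponent identities you record ($s\beta=\mu-a$, $(s-1)\beta=\mu-2$, $sr=q$, $\theta r=\beta$, $\theta=(\mu-a)/q$) are all correct and are exactly what is needed so that, after Gagliardo--Nirenberg in space and integration in time, the product $X^{1-\theta}Y^{\theta}$ collapses via Young's inequality to $X+Y=[[u]]$. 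Your verification that $\mu>ad/\beta$ is equivalent to $m<\beta^{*}$, hence to $\theta\in(0,1)$ and $r\in(m,\beta^{*})$, is the right way to see where that hypothesis enters. The $\delta=1$ case is also handled correctly: $\beta<m$ (since $a>0$) gives $\|w\|_{L^\beta}\le C\|w\|_{L^m}$, and $\int_0^T\|w\|_{L^\beta}^{\beta}\,dt\le CT\,X^{\mu-a}$ then produces the extra $T^{1/q}X$ term, which combines with the main term to give the stated $(1+T)^{1/q}[[u]]$.
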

%=================================================================%

\begin{lemma}[cf. \cite{HKP1}] \label{multiseq} 
Let $\{Y_i\}_{i=0}^\infty$ be a sequence of non-negative numbers satisfying
\beqs \label{mABi}
Y_{i+1}\le \sum_{k=1}^m A_k B_k^i  Y_i^{1+\mu_k}, \quad 
i =0,1,2,\cdots,
\eeqs
where  $A_k>0$, $B_k>1$ and $\mu_k>0$ for $k=1,2,\ldots,m$.
Let $B=\max\{B_k : 1\le k\le m\}$ and $\mu=\min\{\mu_k : 1\le k\le m\}$. Then the following statements hold true.
\beqs\label{Y0andD}
\text{If}\quad \sum_{k=1}^m  A_k Y_0^{\mu_k} \le B^{-1/\mu}
\quad\text{then } \lim_{i\to\infty} Y_i=0.
\eeqs
In particular,
\begin{align*}\label{Y0mcond}
\text{if}\quad  Y_0\le \min\{ (m^{-1} A_k^{-1} B^{-\frac 1 {\mu}})^{1/\mu_k} : 1\le k\le  m\}
\quad \text{then } \lim_{i\to\infty} Y_i=0.
\end{align*} 
\end{lemma}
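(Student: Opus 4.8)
The plan is to establish geometric decay of the sequence by proving the explicit bound $Y_i \le Y_0 B^{-i/\mu}$ for every $i \ge 0$, from which $\lim_{i\to\infty} Y_i = 0$ follows at once because $B > 1$. The engine is a single induction on $i$, and essentially the entire content of the lemma reduces to checking that the recursion closes under this ansatz.

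First I would treat the main statement. The base case $i = 0$ is the identity $Y_0 \le Y_0$. For the inductive step, assuming $Y_i \le Y_0 B^{-i/\mu}$, I bound each summand of the recursion $Y_{i+1}\le \sum_{k=1}^m A_k B_k^i Y_i^{1+\mu_k}$. Since $1 < B_k \le B$, I replace $B_k^i$ by $B^i$. Writing $Y_i^{1+\mu_k} = Y_i\, Y_i^{\mu_k}$ and inserting the inductive bound into the factor $Y_i^{\mu_k}$ yields $Y_i^{\mu_k} \le Y_0^{\mu_k} B^{-i\mu_k/\mu}$. The decisive monotonicity observation is that $\mu_k \ge \mu$ forces $\mu_k/\mu \ge 1$, so, because $B > 1$, the exponent obeys $B^{-i\mu_k/\mu} \le B^{-i}$. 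Combining these estimates, the factors $B^i$ and $B^{-i}$ cancel and I obtain $Y_{i+1} \le Y_i \sum_{k=1}^m A_k Y_0^{\mu_k}$. Here the hypothesis $\sum_{k=1}^m A_k Y_0^{\mu_k} \le B^{-1/\mu}$ enters directly: it gives $Y_{i+1} \le Y_i B^{-1/\mu} \le Y_0 B^{-(i+1)/\mu}$, which is precisely the ansatz at level $i+1$ and closes the induction.

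For the \emph{in particular} clause I would merely verify that the stated smallness of $Y_0$ implies the summation hypothesis of the first part. From $Y_0 \le (m^{-1} A_k^{-1} B^{-1/\mu})^{1/\mu_k}$ one raises to the power $\mu_k$ to get $A_k Y_0^{\mu_k} \le m^{-1} B^{-1/\mu}$ for each $k$; summing the $m$ terms gives $\sum_{k=1}^m A_k Y_0^{\mu_k} \le B^{-1/\mu}$, so the conclusion $\lim_{i\to\infty} Y_i = 0$ follows from the case already proved.

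The main obstacle — such as it is — is purely the bookkeeping of exponents: one must commit to the single uniform decay rate $B^{-1/\mu}$ rather than $k$-dependent rates, so that one geometric factor absorbs all $m$ terms simultaneously, and then exploit $\mu_k \ge \mu$ at exactly the step where the superlinear powers $Y_i^{1+\mu_k}$ would otherwise break the cancellation between $B^i$ and the induction-generated $B^{-i\mu_k/\mu}$. Once that bound on the exponent is in place, the rest is elementary.
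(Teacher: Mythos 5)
Your proof is correct, and every step closes: the induction on the ansatz $Y_i \le Y_0 B^{-i/\mu}$ works because $B_k^i \le B^i$, the inductive hypothesis applied to the factor $Y_i^{\mu_k}$ gives $B^{-i\mu_k/\mu} \le B^{-i}$ (using $\mu_k \ge \mu$ and $B>1$), the factors $B^i$ and $B^{-i}$ cancel to leave the contraction $Y_{i+1}\le Y_i\sum_k A_k Y_0^{\mu_k}\le Y_i B^{-1/\mu}$, and the \emph{in particular} clause follows by raising the stated bound on $Y_0$ to the power $\mu_k$ and summing. One point of comparison worth noting: this paper does not actually prove Lemma~\ref{multiseq} --- it is quoted from the reference \cite{HKP1}, and the proof environment that follows it in the appendix is the proof of Theorem~\ref{pinf}, not of the lemma --- so there is no in-paper argument to measure yours against; your induction is the standard fast-geometric-convergence argument (in the spirit of the classical Ladyzhenskaya--Solonnikov--Ural\cprime ceva lemma, generalized to $m$ terms by committing to the single uniform rate $B^{-1/\mu}$), which is essentially what the cited reference does.
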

%==========================
\begin{proof}
\noindent{\bf Proof of Theorem \ref{pinf}.}
We follow De Giorgi's technique (see \cite{LadyParaBook68}). 
First, we rewrite \eqref{maineq} as
\beq\label{eqgamma}
\bar p_t - \nabla\cdot (K(|\nabla p|)\nabla p) = f - \Psi_t.
\eeq

For any $k\ge 0$, let 
\beqs
\bar p^{(k)}=\max\{\bar p-k,0\}, \quad S_{k}(t)=\{ x\in \Omega: \bar p^{(k)}(x,t)\ge 0\}, \quad \sigma_k =\int_0^T |S_k(t)|dt.\eeqs
Let $k\ge \|\bar  p_0\|_{L^\infty}$. Then $\bar p^{(k)}(x,0)=0.$
Multiplying \eqref{eqgamma} by $|\bar p^{(k)}|^{\mu-1}$ and integrating over the domain $\Omega$ give
\beq\label{base0}
\begin{aligned}
\frac 1\mu \ddt \int_\Omega |\bar p^{(k)}|^\mu dx + (\mu-1)\int_\Omega |\bar p^{(k)}|^{\mu-2} K(|\nabla p|)\nabla p\cdot \nabla \bar p^{(k)} dx\\
= \int_\Omega (f-\Psi_t) |\bar{p}^{(k)}|^{\mu-1} dx.
\end{aligned}
\eeq
Since $\nabla \bar p^{(k)} = \nabla \bar p = \nabla p -\nabla \Psi$, \eqref{base0} implies  
\beq\label{est1}
\begin{aligned}
& \ddt \int_\Omega |\bar p^{(k)}|^\mu dx + \int_\Omega |\bar p^{(k)}|^{\mu-2} K(|\nabla p|)|\nabla p|^2 dx\\
&\quad\le C\int_\Omega |f-\Psi_t| |\bar{p}^{(k)}|^{\mu-1} dx +C\int_\Omega |\bar p^{(k)}|^{\mu-2} K(|\nabla p|)|\nabla p| |\nabla \Psi| dx.
\end{aligned}
\eeq
Using \eqref{i:ineq2}, we have
\begin{multline}\label{est2}
|\bar p^{(k)}|^{\mu-2} K(|\nabla p|)|\nabla p|^2
\ge C\left(|\bar p^{(k)}|^{\mu-2} |\nabla p|^{\beta}  -|\bar p^{(k)}|^{\mu-2}\right)\\
\ge C\left( |\bar p^{(k)}|^{\mu-2} |\nabla\bar p^{(k)}|^{\beta}  - |\bar p^{(k)}|^{\mu-2} |\nabla \Psi|^{\beta} -|\bar p^{(k)}|^{\mu-2}\right).
\end{multline}
Also    
\begin{align*}
 |\bar p^{(k)}|^{\mu-2} K(|\nabla p|)|\nabla p| |\nabla \Psi|&\le  |\bar p^{(k)}|^{\mu-2} (|\nabla\bar p|^{\beta-1}+  |\nabla \Psi|^{\beta-1}) |\nabla \Psi| \\
&\le  |\bar p^{(k)}|^{\mu-2} |\nabla\bar p|^{\beta-1}  |\nabla \Psi| + |\bar p^{(k)}|^{\mu-2}  |\nabla \Psi|^{\beta} .
\end{align*}
Young's inequality provides 
$$
|\nabla\bar p|^{\beta-1}|\nabla \Psi|\le \varep |\nabla\bar p^{(k)}|^{\beta}+ C\varep^ {1-\beta}|\nabla\Psi|^{\beta}.
$$ 
Thus   
\beq \label{est3}
\begin{aligned}
& |\bar p^{(k)}|^{\mu-2} K(|\nabla p|)|\nabla p| |\nabla \Psi| \\
&\quad\le \varep |\bar p^{(k)}|^{\mu-2} |\nabla\bar p^{(k)}|^{\beta} +C(1+\varep^ {1-\beta})|\bar p^{(k)}|^{\mu-2}  |\nabla \Psi|^{\beta}.
\end{aligned}
\eeq
Combining \eqref{est1}, \eqref{est2} and \eqref{est3}, selecting $\varep =C/2$ we obtain
\begin{align*}
& \ddt \int_\Omega |\bar p^{(k)}|^\mu dx + \int_\Omega |\bar p^{(k)}|^{\mu-2} |\nabla \bar p^{(k)}|^{\beta}dx\\
&\quad\le C\int_\Omega |f-\Psi_t| |\bar{p}^{(k)}|^{\mu-1} dx + C \int_\Omega |\bar p^{(k)}|^{\mu-2} ( |\nabla \Psi|^{\beta} + 1) dx\\
&\quad\le \delta  \int_\Omega |\bar p^{(k)}|^\mu dx  
 + C \delta^{1-\mu} \int_\Omega |f-\Psi_t|^\mu \chi_k dx \\
 &\qquad\qquad\qquad\qquad+  C \delta^{1-\frac \mu 2} (1+ \|\nabla \Psi\|_{L^\infty})^\frac{\beta\mu}{2}  |S_k(t)|\\
&\quad\le \delta  \int_\Omega |\bar p^{(k)}|^\mu dx  
 + C \delta^{1-\mu} \|f-\Psi_t\|_{L^{\mu+1}}^\mu |S_k(t)|\\
 &\qquad\qquad\qquad\qquad +  C \delta^{1-\frac \mu 2} (1+ \|\nabla \Psi\|_{L^\infty})^\frac{\beta\mu}{2}  |S_k(t)|.
\end{align*}
Here $\chi_k(t)$ is the characteristics function of $S_k(t)$. 
In previous inequality integrating from $0$ to $T$ and selecting $\delta=1/(2T)$ we find that
\begin{align*}
&\sup_{[0,T]} \int_\Omega |\bar p^{(k)}|^\mu dx + \int_0^T\int_\Omega |\bar p^{(k)}|^{\mu-2}|\nabla \bar p^{(k)}|^{\beta} dxdt\\
&\quad\le C\int_0^T \left( T^{\mu-1} \|f-\Psi_t\|_{L^{\mu+1}}^\mu  + T^{\frac \mu 2-1}(1+ \|\nabla \Psi\|_{L^\infty}^\frac{\beta\mu}{2})\right)|S_k(t)| dt.
\end{align*}
Let
\begin{align}
F_k &=\sup_{[0,T]} \int_\Omega |\bar p^{(k)}|^\mu dx + \int_0^T\int_\Omega |\bar p^{(k)}|^{\mu-2}|\nabla \bar p^{(k)}|^{\beta} dxdt,\quad \sigma_k=\int_0^T |S_k(t)|dt,\nonumber\\
\label{Edef} \mathcal E_T & = T^{\mu-1}\|f-\Psi_t\|_{L_t^\infty (0,T;L_x^{\mu+1})}^\mu 
+T^{\frac \mu 2-1}(1+ \|\nabla \Psi\|_{L_t^\infty(0,T; L_x^\infty)}^\frac{\beta\mu}{2}).
\end{align}
Then $ F_k \le C \mathcal E_T \sigma_k. $
Let $\mu>0$ be sufficient large, $q$  as in \eqref{expndef}. By Lemma~\ref{Sob4}:
\beq\label{general}
\| \bar p^{(k)}\|_{L^q(Q_T)}\le C(1+ T)^{1/q} (F_k^{1/\mu}+F_k^{1/(\mu-a)}).
\eeq
Let $k_i=M_0(2-2^{-i})$, for $i=0,1,2\ldots$, then $k_i$ is increasing in $i$, $S_{k_i}$ and $\sigma_{k_i}$ are decreasing.

Note that we want $k_0=M_0\ge \|\bar p_0\|_{L^\infty}$. By definition, 
\beq\label{kk}
\|\bar  p^{(k_i)}\|_{L^q(Q_T)} \ge \|\bar  p^{(k_i)}\|_{L^q(\mathcal Q_{k_{i+1}})} \ge  (k_{i+1}-k_{i})\sigma_{k_{i+1}}^{1/q},
\eeq
where $\mathcal Q_k=\{ (x,t)\in U\times(0,T): p(x,t)>k\}$.

Combining \eqref{kk} and \eqref{general}, we obtain 
\beqs
\sigma_{k_{i+1}}^{1/q}\le \frac{C(1+T)^{1/q}}{ k_{i+1}-k_{i}} \Big[F_{k_i}^{1/\mu}+F_{k_i}^{ 1/(\mu-a)}\Big].
\eeqs
Hence
\beq\label{sigki}
\sigma_{k_{i+1}}\le C (1+T)\frac{ 2^{qi} }{M_0^q}\Big[ (\mathcal E_T \sigma_{k_i})^{ q/\mu}+ (\mathcal E_T \sigma_{k_i})^{q/(\mu-a)}\Big].
\eeq
Let 
\beqs 
Y_i=\sigma_{k_i},
\quad A_1=C(1+T) \mathcal E_T^{q/\mu} M_0^{-q},
\quad  A_2=C(1+T) \mathcal E_T^{q/(\mu -a)} M_0^{-q},
\quad B=2^q,
\eeqs
\beqs
\mu_0= q/ \mu-1,\quad \nu_0= q/(\mu-a)-1.
\eeqs
Then \eqref{sigki} rewrites as 
\beqs
Y_{i+1}\le B(A_1 Y_i^{1+\mu_0}+A_2 Y_i^{1+\nu_0}).
\eeqs
Note that $\mu_0<\nu_0$,
$
k_0=M_0\ge \|\bar p_0\|_{L^\infty},
\quad Y_0=\sigma_{M_0}\le |Q_T|= CT.
$

Choose $M_0$ large such that 
\beq\label{hyp}
 T+1\le C \min\Big\{ A_1^{-1/\mu_0},A_2^{-1/\nu_0} \Big\}.
\eeq
Explicitly,  
\beqs
\begin{split}
M_0&\ge C(1+T)^\frac{\mu_0+1}{q} \mathcal E_T^\frac1\mu=C(1+T)^\frac1\mu \mathcal E_T^\frac1\mu,\\
 M_0&\ge (1+T)^\frac{\nu_0+1}{q} \mathcal E_T^\frac 1{\mu-a}=(1+T)^\frac 1{\mu-a} \mathcal E_T^\frac 1{\mu-a}.
\end{split}
\eeqs
Since
\begin{align*}
 (1+T)\mathcal E_T &\le C (1+T)^{\mu} \|f-\Psi_t\|_{L_t^\infty (0,T;L_x^{\mu+1})}^\mu\\
 &\quad+C (1+T)^\frac{\mu}{2}(1+\|\nabla \Psi\|_{L_t^\infty(0,T; L_x^\infty)}^{\beta})^\frac{\mu}{2}.
\end{align*}
We select
\beqs
\begin{aligned}
M_0 =\|\bar p_0\|_{L^\infty}+ C\Big\{ (1+T)^{\mu} (1+\|f-\Psi_t\|_{L_t^\infty (0,T;L_x^{\mu+1})})^\mu \\
+(1+T)^\frac{\mu}{2}(1+\|\nabla \Psi\|_{L_t^\infty(0,T; L_x^\infty)}^{\beta})^\frac{\mu}{2}\Big\}^\frac1{\mu-a}.
\end{aligned}
\eeqs
Then \eqref{hyp} holds. Applying \ref{multiseq} with $m=2$, we have 
\beqs \sigma_{2M_0}=\lim_{i\to\infty} \sigma_{k_i}=0,\eeqs
that is
\beqs
\bar p(x,t)\le 2M_0\quad \text{a.e. in}\quad Q_T.
\eeqs
Replacing $p$ by $-p$, $\psi$ by $-\psi$. We finish the proof. 
\end{proof}

{\bf Acknowledgments.}
The authors is deeply grateful to Luan Hoang for precious advice, stimulating discussions, and helpful comments.

\bibliographystyle{siam}

\def\cprime{$'$} \def\cprime{$'$}

\end{document}